\newtheorem{thm}{Theorem}
\newtheorem{prop}[thm]{Proposition}
\newtheorem{cor}[thm]{Corollary}
\newtheorem{lem}[thm]{Lemma}
\theoremstyle{definition}
\newtheorem{df}[thm]{Definition}
\theoremstyle{remark}
\newtheorem{rem}[thm]{Remark}
\DeclareMathOperator{\Mod}{Mod}
\DeclareMathOperator{\Aut}{Aut}
\DeclareMathOperator{\PD}{PD}
\DeclareMathOperator{\GEM}{GEM}
\DeclareMathOperator{\Beta}{Beta}
\DeclareMathOperator{\area}{area}
\DeclareMathOperator{\Poisson}{Poi}
\DeclareMathOperator{\id}{id}
\newcommand{\NN}{\mathbb{N}}
\newcommand{\ZZ}{\mathbb{Z}}
\newcommand{\QQ}{\mathbb{Q}}
\newcommand{\RR}{\mathbb{R}}
\newcommand{\CC}{\mathbb{C}}
\newcommand{\EE}{\mathbb{E}}
\newcommand{\PP}{\mathbb{P}}
\newcommand{\cG}{\mathcal{G}}
\newcommand{\cL}{\mathcal{L}}
\newcommand{\cM}{\mathcal{M}}
\newcommand{\cML}{\mathcal{ML}}
\newcommand{\cQ}{\mathcal{Q}}
\newcommand{\cY}{\mathcal{Y}}
\newcommand{\cZ}{\mathcal{Z}}
\newcommand{\Ell}{\boldsymbol\ell}
\newcommand{\Multiplicity}{\operatorname{\mathbf{mult}}}
\newcommand{\multiplicity}{\operatorname{mult}}
\newcommand{\da}{{\mkern1mu \downarrow}}
\newcommand{\height}{\operatorname{height}}
\let\Gamma\varGamma
\let\varGamma\temp
\let\Sigma\varSigma
\let\varSigma\temp
\title[Length partition of random multicurves on hyperbolic surfaces]{Length partition of random multicurves \\ on large genus hyperbolic surfaces}
\date{\today}
\author{Vincent Delecroix}
\address{Univ. Bordeaux, CNRS, Bordeaux INP, LaBRI, UMR 5800, F-33400 Talence, France}
\email{vincent.delecroix@u-bordeaux.fr}
\author{Mingkun Liu}
\address{Université de Paris and Sorbonne Université, CNRS, IMJ-PRG, F-75006 Paris, France}
\email{mingkun.liu@imj-prg.fr}
\begin{document}
\maketitle

\begin{abstract}
We study the length statistics of the components of a random multicurve on a surface of genus $g \geq 2$. For each fixed genus, the existence of such statistics follows from the work of M.~Mirzakhani~\cite{Mir16}, F.~Arana-Herrera~\cite{AH21} and M.~Liu~\cite{Liu19}. We prove that as the genus $g$ tends to infinity the statistics converge in law to the Poisson--Dirichlet distribution of parameter $\theta=1/2$. In particular, as the genus tends to infinity the mean length of the three longest components converge respectively to $75.8\%$, $17.1\%$ and $4.9\%$ of the total length.
\end{abstract}

\section{Introduction}
\subsection{Lengths statistics of random multicurves in large genus}
Let $X$ be a closed Riemann surface of genus $g \geq 2$ endowed with its conformal hyperbolic metric of constant curvature $-1$. A \emph{simple closed curve} on $X$ is a connected closed curve on $X$, non-homotopic to a point and without self-intersection. In the free homotopy class of a simple closed curve $\gamma$, there exists a unique geodesic representative with respect to $X$. We denote by $\ell_X(\gamma)$ the length of this geodesic representative.

A \emph{multicurve} on $X$ is a multiset of disjoint simple closed curves on $X$. Given a multicurve $\gamma$, a \emph{component} of $\gamma$ is a maximal family of freely homotopic curves in $\gamma$. The cardinal of a component is called its \emph{multiplicity} and the \emph{length} of a component is the sum of the lengths of the simple curves belonging to the component (or equivalently its multiplicity multiplied by the length of any simple closed curve if the component). A multicurve is called \emph{primitive} if all its components have multiplicity one. We denote by $\Ell^\da_X(\gamma)$ the vector of the lengths of each component sorted in decreasing order and by $\Multiplicity(\gamma)$ the multiset of the multiplicities of the components of $\gamma$, and by $\multiplicity(\gamma)$ the maximum of $\Multiplicity(\gamma)$. Neither $\Multiplicity(\gamma)$ nor $\multiplicity(\gamma)$ depend on the hyperbolic structure $X$. We define $\ell_X(\gamma)$ as the sum of the entries of $\Ell^\da_X(\gamma)$ and the \emph{normalized length vector} to be
\[
\hat{\Ell}_X^\da(\gamma) \coloneqq \frac{\Ell^\da_X(\gamma)}{\ell_X(\gamma)}.
\]
We denote by $\cML_X(\ZZ)$ the set of homotopy classes of multicurves on $X$. Our notation for the set of multicurves is explained by the fact that multicurves are the integral points of the space of measured laminations usually denoted $\cML_X$.

In order to make sense of convergence, we need all normalized vectors to belong to the same space. For an integer $k \geq 1$ and
a real number $r > 0$ let us define
\[
\Delta_{\leq r}^k
\coloneqq
\{(x_1,x_2,\ldots,x_k) \in [0,\infty)^k : x_1 + x_2 + \cdots + x_k \leq r\}.
\]
Let also define
\[
\Delta_{\leq r}^\infty
\coloneqq
\{(x_1,x_2,\dots) \in [0,\infty)^{\NN} : x_1 + x_2 + \cdots \leq r\}.
\]
For $k \leq k'$ we have an injection $\Delta_{\leq r}^k \to \Delta^{k'}_{\leq r}$ by completing vectors with zeros.
The infinite simplex $\Delta_{\leq r}^\infty$ is the inductive limit of these injections and we always identify $\Delta_{\leq r}^k$ as a subspace of $\Delta_{\leq r}^\infty$. In particular each vector $\hat{\Ell}_X^\da(\gamma)$ is naturally an element of $\Delta_{\leq 1}^\infty$ by completing its coordinates with infinitely many zeros.

As our aim is to study convergence of random infinite vectors, let us mention that $\Delta_{\leq 1}^\infty$ is a closed subset of $[0,1]^\NN$ endowed with the product topology. This topology coincides with the topology of the inductive limit. When we consider a convergence in distribution on $\Delta_{\leq 1}^\infty$ we mean convergence in the space of Borel probability measures on $\Delta_{\leq 1}^\infty$ which is a compact set.

The following result is a consequence of the works~\cite{AH21} and~\cite{Liu19}.
\begin{thm} \label{thm:Lg}
Let $g \geq 2$ and $m \in \NN \cup \{+\infty\}$. There exists a random variable $L^{(g,m)\downarrow} = (L^{(g,m)\downarrow}_1, L^{(g,m)\downarrow}_2,\ldots)$ on $\Delta^{3g-3}_{\leq 1}$ with the following properties. For
any Riemann surface $X$ of genus $g$, as $R \to \infty$ we have the following convergence in distribution
\[
\frac{1}{s_X(R, m)} \sum_{\substack{\gamma \in \cML_X(\ZZ) \\ \ell_X(\gamma)\leq R \\ \multiplicity(\gamma) \le m }} \delta_{\hat{\Ell}_X^\da(\gamma)} \xrightarrow[R \to \infty]{} L^{(g,m)\downarrow}
\]
where $\delta_{\hat{\Ell}_X^\da(\gamma)}$ is the Dirac mass at the vector $\hat{\Ell}_X^\da(\gamma)$ and $s_X(R, m) \coloneqq \# \{\gamma \in \cML_X(\ZZ) : \ell_X(\gamma)\leq R  \text{ and } \multiplicity(\gamma) \le m \}$ is the number of multicurves on $X$ of length at most $R$ and multiplicities at most $m$.
\end{thm}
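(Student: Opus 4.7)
The plan is to derive the statement from a lattice-point equidistribution theorem in the space of measured laminations. Fix a hyperbolic surface $X$ of genus $g$ and set $B_X := \{\lambda \in \cML_X : \ell_X(\lambda) \leq 1\}$. The function $\hat{\Ell}_X^\da$ is invariant under positive scaling, so a multicurve $\gamma$ with $\ell_X(\gamma)\leq R$ contributes the Dirac mass at the same point of $\Delta_{\leq 1}^\infty$ as the rescaled lamination $\gamma/R \in B_X$. The key input, due to Mirzakhani, refined by Arana-Herrera~\cite{AH21}, and extended to the multiplicity-constrained case by Liu~\cite{Liu19}, is the existence of a Radon measure $\mu_X^{(m)}$ on $\cML_X$ (absolutely continuous on every top-dimensional stratum with respect to Thurston's measure) such that for every bounded Borel function $f$ on $\cML_X$ whose discontinuity set is $\mu_X^{(m)}$-null,
\[
\frac{1}{R^{6g-6}} \sum_{\substack{\gamma \in \cML_X(\ZZ) \\ \multiplicity(\gamma) \leq m}} f\!\left(\frac{\gamma}{R}\right) \;\xrightarrow[R\to\infty]{}\; \int_{\cML_X} f \, d\mu_X^{(m)}.
\]

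Applying this with $f = \mathbf{1}_{B_X}$ gives the polynomial asymptotic $s_X(R,m) \sim \mu_X^{(m)}(B_X)\,R^{6g-6}$, while applying it with $f = \mathbf{1}_{B_X}\cdot \phi\circ \hat{\Ell}_X^\da$ for a continuous bounded $\phi : \Delta_{\leq 1}^\infty \to \RR$ yields, after dividing by $s_X(R,m)$ and invoking the portmanteau theorem, convergence in distribution on the compact space $\Delta_{\leq 1}^\infty$ of the empirical measure in Theorem~\ref{thm:Lg} to the probability law
\[
L^{(g,m)\downarrow} \;\sim\; \bigl(\hat{\Ell}_X^\da\bigr)_* \frac{\mu_X^{(m)}|_{B_X}}{\mu_X^{(m)}(B_X)}.
\]
The support is contained in $\Delta_{\leq 1}^{3g-3}$ because a multicurve on a genus-$g$ surface has at most $3g-3$ disjoint components. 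For the continuity hypothesis, the boundary $\{\ell_X = 1\}$ is $\mu_X^{(m)}$-null thanks to the piecewise-linearity of $\ell_X$ in train-track coordinates and the absolute continuity of $\mu_X^{(m)}$ in those charts.

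It remains to check that the limit law does not depend on $X$. I would stratify $\cML_X$ by topological type: on the simplicial cone spanned by a simple multicurve $\gamma_{\mathrm{top}} = (\gamma_1,\ldots,\gamma_k)$, lattice points are $\sum_i n_i \gamma_i$ with $n_i \in \ZZ_{>0}$, Thurston's measure is proportional to $dn_1\cdots dn_k$, and the length functional reads $\sum_i n_i \ell_X(\gamma_i)$. The substitution $y_i := n_i \ell_X(\gamma_i)$ transforms this cone measure into $(\prod_i \ell_X(\gamma_i))^{-1}\, dy_1\cdots dy_k$ on $\{y_i \geq 0,\ \sum_i y_i \leq 1\}$, while $\hat{\Ell}_X^\da$ becomes the $X$-free map $(y_1,\ldots,y_k) \mapsto \mathrm{sort}(y)/\sum_j y_j$. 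The leading contribution of order $R^{6g-6}$ comes exclusively from top-dimensional cones ($k = 3g-3$), so the $X$-dependent weight $\prod_i \ell_X(\gamma_i)^{-1}$ factors through the normalization $\mu_X^{(m)}(B_X)^{-1}$ and cancels. The principal obstacle is controlling the lower-dimensional strata and, in the finite-$m$ case, replacing the continuous simplex integral by a Riemann sum over multiplicity patterns $(n_i)\in \{1,\ldots,m\}^k$; this inclusion-exclusion over multiplicity patterns is precisely the technical core of~\cite{Liu19}, and granted it, the identification of the limit law as an $X$-independent probability measure on $\Delta_{\leq 1}^{3g-3}$ is routine.
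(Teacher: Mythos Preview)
There is a genuine gap in your equidistribution argument: the function $\hat{\Ell}_X^\da$ does not extend to a function on $\cML_X$ whose discontinuity set is Thurston-null, so you cannot legitimately plug $f = \mathbf{1}_{B_X}\cdot \phi\circ \hat{\Ell}_X^\da$ into the lattice-point theorem. The limit measure $\mu_X^{(m)}$ you invoke is absolutely continuous with respect to the Thurston measure, hence supported on the full $(6g-6)$-dimensional space where a generic point is a minimal irrational lamination. At such a point there is no meaningful component decomposition, and any extension of $\hat{\Ell}_X^\da$ you choose will be discontinuous there: a sequence of $2$-component multicurves $a_n\gamma_1^{(n)} + b_n\gamma_2^{(n)}$ can converge in $\cML_X$ to a minimal lamination while $\hat{\Ell}_X^\da$ along the sequence accumulates anywhere in $\Delta^2_{=1}$. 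In short, the component structure of a multicurve is not stable under limits in $\cML_X$, so the empirical measures of $\hat{\Ell}_X^\da(\gamma)$ cannot be read off from equidistribution of $\gamma/R$ in $\cML_X$.

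Your stratification argument for $X$-independence also breaks down. The claim that ``the leading contribution of order $R^{6g-6}$ comes exclusively from top-dimensional cones ($k=3g-3$)'' is false: by Mirzakhani's counting theorem every topological type, including simple closed curves ($k=1$), contributes a constant multiple of $R^{6g-6}$. The cones you describe are at most $(3g-3)$-dimensional, so none of them is top-dimensional in $\cML_X$, and the change of variables $y_i = n_i\ell_X(\gamma_i)$ on a single cone does not recover the $R^{6g-6}$ count. What the paper actually does is work one $\Mod(X)$-orbit at a time: within a fixed topological type $(\Gamma,\boldsymbol{m})$ the result of Arana-Herrera and Liu (Theorem~\ref{thm:AHLiu} here) gives the limiting distribution $U^{(\Gamma,\boldsymbol{m})\da}$ of the normalized length vector, with an explicit density built from the Kontsevich polynomial $F_\Gamma$; Mirzakhani's theorem supplies the relative weight $c(\gamma)/b_{g,m}$ of each type. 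Both ingredients are manifestly $X$-independent, and $L^{(g,m)\da}$ is their mixture. The equidistribution underlying Theorem~\ref{thm:AHLiu} takes place in the length-vector target $\RR^k$ (via expanding horospheres in moduli space), not in $\cML_X$, which is precisely what sidesteps the discontinuity issue above.
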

We actually prove a more precise version of the above statement, Theorem~\ref{thm:LgMorePrecise}, in which the law of $L^{(g,m)\downarrow}$ is made explicit. Remark that the limit depends only on the genus of $X$ and not on its hyperbolic metric.

\smallskip

The Poisson--Dirichlet distribution is a probability measure on $\Delta_{\leq 1}^\infty$. The simplest way to introduce it is via the \emph{stick-breaking process}. Let $U_1,U_2,\dots$, be i.i.d.\ random variables with law $\Beta(1,\theta)$ (i.e.\ they are supported on $]0,1]$ with density $\theta (1-x)^{\theta -1}$). Define the vector
\[
    V \coloneqq (U_1, (1-U_1)\, U_2, (1-U_1)(1-U_2)\,U_3, \ldots).
\]
Informally, the components of $V$ are obtained by starting from a stick of length 1 identified with $[0,1]$. At the first stage, $U_1$ determines where we break the first piece and we are left with a stick of size $1-U_1$. We then repeat the process ad libitum.
The law of $V$ is the \emph{Griffiths-Engen-McCloskey distribution of parameter $\theta$} that we denote $\GEM(\theta)$. The \emph{Poisson--Dirichlet distribution of parameter $\theta$}, denoted $\PD(\theta)$, is the distribution of $V^\downarrow$, the vector $V$ whose entries are sorted in decreasing order. For more details, we refer the reader to Section~\ref{ssec:PoissonDirichletAndGEM}. The distribution $\PD(1)$ is the limit distribution of the orbit length of uniform random permutations. The distribution $\PD(\theta)$ appears when considering the Ewens distribution with parameter $\theta$ on the symmetric group. See Section~\ref{sssec:permutations} below for a more detailed discussion on permutations.

Our main result is the following.
\begin{thm}\label{thm:PD}
For any $m \in \NN \cup \{+\infty\}$, the sequence $(L^{(g,m)\downarrow})_{g \ge 2}$ converges in distribution to $\PD(1/2)$ as $g \to \infty$.
\end{thm}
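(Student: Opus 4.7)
My strategy is to combine the explicit description of $L^{(g,m)\downarrow}$ (the content of the forthcoming Theorem~\ref{thm:LgMorePrecise}) with the stick-breaking characterization of $\PD(1/2)$. Each topological type of multicurve on $X$ is encoded by a stable graph $\Gamma$ whose edges correspond to the distinct primitive components (with multiplicity $m_e$) and whose vertices correspond to the pieces of $X \setminus \gamma$ (decorated by genus $g_v$ and valency $n_v$). Mirzakhani's integration formula yields an explicit expression for $L^{(g,m)\downarrow}$ as a mixture over such graphs $\Gamma$: the weight of each $\Gamma$ and the conditional density on the length simplex $\sum m_e \ell_e = 1$ are integrals involving the top-degree parts of the Weil--Petersson polynomials $V_{g_v,n_v}$.

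Since $\Delta_{\leq 1}^\infty$ is compact, I would establish convergence via the size-biased (stick-breaking) characterization of $\PD(1/2)$. Concretely, I would prove that as $g \to \infty$: (i) if one picks a simple closed curve of the random multicurve with probability proportional to the length it contributes (size-biasing), its length fraction $U$ converges in law to $\Beta(1,1/2)$; and (ii) conditionally on $U$, the residual normalized length partition on the cut surface is asymptotically independent of $U$ and has the same limit $\PD(1/2)$. Together (i) and (ii) characterize $\PD(1/2)$. For (i), cutting along the chosen curve produces either a non-separating configuration (connected surface of genus $g-1$ with two extra marked points) or a separating one; using Mirzakhani--Zograf asymptotics for $V_{g,n}$, the non-separating, multiplicity-one case dominates and its conditional length-fraction density collapses to $\tfrac{1}{2}(1-x)^{-1/2}$, which is precisely the $\Beta(1,1/2)$ density. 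For (ii), one iterates on the cut surface after strengthening the induction to cover surfaces of large genus with a bounded number of marked points; the same cutting procedure and volume asymptotics yield the independent residual $\PD(1/2)$.

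\textbf{Expected main obstacle.} The analytic content lives in the uniform large-$g$ estimates for the top-degree coefficients of $V_{g,n}$, summed over all stable graphs of the cut configuration. One needs bounds sharp enough to (a) interchange the $g \to \infty$ limit with the sum over topological types of multicurves, (b) show that separating cuts and components of multiplicity $\geq 2$ contribute $o(1)$, and (c) extract the precise $\Beta(1,1/2)$ density from the dominant (non-separating, multiplicity-one) configuration. Tightness on $\Delta_{\leq 1}^\infty$ is automatic by compactness, so all the real work is in these volume asymptotics, building on Mirzakhani--Zograf and their subsequent refinements. The case $m < \infty$ should follow with only minor adjustments, since restricting multiplicities does not alter the dominant (multiplicity-one) configuration.
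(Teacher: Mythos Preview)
Your high-level strategy---pass to the size-biased reordering and identify the limit as $\GEM(1/2)$---is exactly the route the paper takes, and your observation that non-separating configurations dominate is also used (Theorem~\ref{thm:reduction} restricts to the single-vertex stable graphs $\Gamma_{g,k}$ with $k\le \kappa\log(6g-6)/2$).

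Where you diverge is in how the $\GEM(1/2)$ limit is established. You propose a recursive stick-breaking argument: show the first size-biased fraction tends to $\Beta(1,1/2)$, then argue by ``induction on the cut surface'' that the residual is asymptotically independent and again $\PD(1/2)$. The paper does \emph{not} do this. It computes, for each $r$ and each $p=(p_1,\dots,p_r)$, the moments
\[
M_p(X^*)=\EE\bigl[(1-X_1^*)\cdots(1-X_1^*-\cdots-X_{r-1}^*)\,(X_1^*)^{p_1}\cdots(X_r^*)^{p_r}\bigr]
\]
of the size-biased vector directly (Lemma~\ref{lem:MarginalSizeBiased}, Lemma~\ref{lem:UgkDensity}), inserts Aggarwal's uniform asymptotics for the individual correlators $\langle\tau_{d_1}\cdots\tau_{d_n}\rangle$, and then extracts the large-$g$ behaviour of the resulting sum over compositions $j_1+\cdots+j_k=3g-3$ by singularity analysis of an explicit generating function (Theorems~\ref{thm:transfer} and~\ref{thm:remainder}). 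The limit matches the $\GEM(1/2)$ moments from Lemma~\ref{lem:GEMMoments}, and the method of moments (Lemma~\ref{lem:momentMethod}) finishes.

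Your step~(ii) is where the real content hides, and as stated it is a gap. After size-biasing and removing one component, the residual length vector is \emph{not} a priori distributed as an independent random multicurve on the cut surface: it is a conditioned object whose law must be read off from the original mixture over stable graphs, and there is no exact self-similarity here (unlike, say, the Chinese restaurant process for Ewens permutations). Proving the asymptotic independence and identifying the conditional law with $L^{(g-1,\,2\text{ marked points},\,m)}$ requires exactly the kind of uniform control over ratios of Kontsevich polynomials that the moment computation already packages; your outline does not indicate how you would obtain it. A smaller point: the asymptotic input you need is Aggarwal's theorem on individual $\psi$-correlators, uniform over all $(d_1,\dots,d_n)$ with $n=o(\sqrt g)$ (Theorem~\ref{thm:AggarwalAsymptotics}); the Mirzakhani--Zograf estimates concern total Weil--Petersson volumes and are not fine enough to resolve the length-fraction density coefficient by coefficient.
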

The most interesting cases of this convergence are for $m=1$ (primitive multicurves) and $m=\infty$ (all multicurves). Let us insist that $L^{(g,1)\da}$ and $L^{(g,+\infty)}$ converge to the same limit as $g \to \infty$.

All marginals of the Poisson--Dirichlet law can be computed, see for example~\cite[Section~4.11]{ABT03}. In particular if $V = (V_1, V_2, \ldots) \sim \PD(\theta)$ then
\[
    \EE((V_j)^n)
    =
    \frac{\varGamma(\theta + 1)}{\varGamma(\theta + n)} \int_0^\infty \frac{(\theta E_1(x))^{j-1}}{(j-1)!} \, x^{n-1} e^{-x-\theta E_1(x)} \, dx
\]
where $E_1(x) \coloneqq\int_x^\infty \frac{e^{-y}}{y}\, dy$. The formulas can be turned into a computer program and values were tabulated in~\cite{Gri79,Gri88}. For $\theta=1/2$ we have
\[
    \EE(V_1) \approx 0.758,\quad
    \EE(V_2) \approx 0.171,\quad \text{and} \quad
    \EE(V_3) \approx 0.049.
\]

\subsection{Further remarks}
\subsubsection{Square-tiled surfaces}
In this section we give an alternative statement of Theorem~\ref{thm:PD} in terms of square-tiled surfaces. The correspondence between statistics of multicurves and statistics of square-tiled surfaces is developed in~\cite{DGZZ21} and~\cite{AH20b} and we refer the readers to these two references.

A \emph{square-tiled surface} is a connected surface obtained from gluing finitely many unit squares $[0,1] \times [0,1]$ along their edges by translation $z \mapsto z + u$ or ``half-translation'' $z \mapsto -z + u$. Combinatorially, one can label the squares from $1$ to $N$ and then a square-tiled surface is encoded by two involutions without fixed points $(\sigma, \tau)$ of $\{\pm 1, \pm 2, \ldots, \pm N\}$. More precisely, $\sigma$ encodes the horizontal gluings: $+i$ and $-i$ are respectively the right and left sides of the $i$-th squares. The orbits of $\sigma$ with different signs are glued by translations and the ones with same signs are glued by half-translations. And $\tau$ encodes the vertical gluings : $+i$ and $-i$ are respectively the top and bottom sides of the $i$-th squares. The labelling is irrelevant in our definition and two pairs $(\sigma, \tau)$ and $(\sigma', \tau')$ encode the same square-tiled surface if there exists a permutation $\alpha$ of $\{\pm 1, \pm 2, \ldots, \pm N\}$ so that $\alpha(-i) = - \alpha(+i)$, $\sigma' = \alpha \circ \sigma \circ \alpha^{-1}$ and $\tau' = \alpha \circ \tau \circ \alpha^{-1}$.

A square-tiled surface comes with a conformal structure and a quadratic form coming from the conformal structure of the unit square and the quadratic form $dz^2$ (both are being preserved by translations and half-translations). This quadratic form might have simple poles and we denote by $\cQ_g(\ZZ)$ the set of holomorphic square-tiled surfaces of genus $g$.

A square-tiled surface come equipped with a filling pair of multicurves $(\gamma_h, \gamma_v)$ coming respectively from the gluings of the horizontal segments $[0,1] \times \{1/2\}$ and vertical segments $\{1/2\} \times [0,1]$ of each square. Conversely, the dual graph of a filling pair of multicurves in a surface of genus $g$ defines a square-tiled surface in $\cQ_g(\ZZ)$. Our notation comes from the fact that holomorphic square-tiled surfaces can be seen as integral points in the moduli space of quadratic differentials $\cQ_g$.
A component of the multicurve $\gamma_h$ corresponds geometrically to a horizontal cylinder. For a square-tiled surface $M$ we denote by $\boldsymbol{A}^\downarrow(M)$ the normalized vector of areas of these horizontal cylinders sorted in decreasing order and by $\height(M)$ the maximum of their heights. Here as in the introduction, normalized mean that we divide by the sum of entries of a vector which coincides with $\area(M)$. The following is a particular case of~\cite[Theorem~1.29]{DGZZ21} using the explicit formulas for $L^{(g,m)}$ given in Theorem~\ref{thm:LgMorePrecise}.
\begin{thm}[\cite{DGZZ21}] \label{thm:LgSquareTiled}
Let $g \ge 2$ and $m \in \NN \cup \{+\infty\}$. Let $L^{(g,m)}$ be the random variable from Theorem~\ref{thm:Lg}. Then as $N \to \infty$ we have the following convergence in distribution
\[
    \frac{1}{
        \#\left\{
            M \in \cQ_g(\ZZ) :
            \begin{array}{l}
                \height(M) \le m \\
                \area(M) \le N
            \end{array}
        \right\}
    }
    \sum_{\substack{M \in \cQ_g(\ZZ) \\ \height(M) \le m \\ \area(M) \le N}}
    \delta_{\boldsymbol{A}^\downarrow(M)}
    \to
    L^{(g,m)}.
\]
\end{thm}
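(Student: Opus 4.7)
The plan is to deduce Theorem~\ref{thm:LgSquareTiled} directly from~\cite[Theorem~1.29]{DGZZ21} by matching the limit distribution provided there with the law of $L^{(g,m)}$ supplied by Theorem~\ref{thm:LgMorePrecise}. The argument revolves around the standard dictionary between square-tiled surfaces in $\cQ_g(\ZZ)$ and pairs of filling integral multicurves on a closed surface of genus $g$.

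First I would make this dictionary explicit. A square-tiled surface $M$ decomposes into horizontal cylinders, one per component of its horizontal multicurve $\gamma_h$: if the $i$-th cylinder has width $w_i$ and height $h_i$, then the corresponding component of $\gamma_h$ has multiplicity $h_i$ and its core curve has flat length $w_i$, so the length of that component in the sense of the introduction equals the cylinder area $w_i h_i$. Consequently $\height(M) = \multiplicity(\gamma_h)$, $\area(M)$ equals the total length of $\gamma_h$ for the horizontal flat metric, and the normalized area vector $\boldsymbol{A}^\downarrow(M)$ coincides with the ordered normalized length vector of $\gamma_h$. Summing over $M$ with $\area(M)\le N$ and $\height(M)\le m$ rewrites as a weighted sum over pairs $(\gamma_h, \gamma_v)$ with $\multiplicity(\gamma_h)\le m$ and intersection number $i(\gamma_h, \gamma_v)\le N$.

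Next I would invoke~\cite[Theorem~1.29]{DGZZ21}, which provides, as $N \to \infty$, convergence of the displayed empirical measure to an explicit limiting law written as a weighted integral of the ordered normalized length vector over $\cML_g$ against the Thurston measure, restricted to the locus where the multiplicity is at most $m$ and renormalized by a Masur–Veech-type constant. This is the only genuinely analytic input; once it is in hand, everything else is an algebraic identification.

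The final and main step is to match this limit with the law of $L^{(g,m)}$. Theorem~\ref{thm:LgMorePrecise} expresses that law by an integral of exactly the same shape: a sum over topological types of multicurves of integrals against the Thurston measure, weighted by Weil–Petersson volumes of moduli spaces of bordered hyperbolic surfaces. I expect the only obstacle here to be bookkeeping — matching the Masur–Veech-type constants of~\cite{DGZZ21} with the Weil–Petersson weights through Mirzakhani's identification of top-degree intersection numbers, and verifying that the cut-off $\height(M)\le m$ on the flat side translates into the multiplicity cut-off $\multiplicity(\gamma_h)\le m$ on the multicurve side. Once these identifications are made, the two laws coincide and Theorem~\ref{thm:LgSquareTiled} follows.
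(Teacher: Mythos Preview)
Your proposal is correct and matches the paper's approach exactly: the paper states that Theorem~\ref{thm:LgSquareTiled} is a particular case of~\cite[Theorem~1.29]{DGZZ21} once one uses the explicit formula for $L^{(g,m)}$ given in Theorem~\ref{thm:LgMorePrecise}, which is precisely the identification you outline. Your write-up is in fact more detailed than the paper's, which gives no proof beyond that one sentence of attribution.
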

An important difference to notice between Theorem~\ref{thm:Lg} and Theorem~\ref{thm:LgSquareTiled} is that in the former the (hyperbolic) metric $X$ is fixed and we sum over the multicurves $\gamma$ while in the latter we sum over the discrete set of holomorphic square-tiled surfaces $M$.

Using Theorem~\ref{thm:LgSquareTiled}, our Theorem~\ref{thm:PD} admits the following reformulation.
\begin{cor}
The vector of normalized areas of horizontal cylinders of a random square-tiled surface of genus $g$ converges in distribution to $\PD(1/2)$ as $g$ tends to $\infty$.
\end{cor}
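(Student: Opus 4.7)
The plan is to chain Theorem~\ref{thm:LgSquareTiled} and Theorem~\ref{thm:PD}: the former turns the counting problem on $\cQ_g(\ZZ)$ into the identification of the large-$N$ limit with $L^{(g,+\infty)\downarrow}$, and the latter sends this to $\PD(1/2)$ as $g \to \infty$. The only substantive point will be to be explicit about the order of the two limits.

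First I would fix $g \geq 2$ and give a concrete meaning to ``a random square-tiled surface of genus $g$'': take the uniform probability measure on the finite set $\{M \in \cQ_g(\ZZ) : \area(M) \leq N\}$ (no height restriction, i.e.\ $m=+\infty$), and consider the law of the normalized area vector $\boldsymbol{A}^\downarrow(M)$, viewed as an element of the compact simplex $\Delta_{\leq 1}^\infty$. Theorem~\ref{thm:LgSquareTiled} with $m=+\infty$ says precisely that, as $N \to \infty$, this law converges weakly to the law of $L^{(g,+\infty)\downarrow}$; call this limit distribution $\nu_g$.

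Next I would apply Theorem~\ref{thm:PD} with $m=+\infty$, which asserts that $L^{(g,+\infty)\downarrow}$ converges in distribution to $\PD(1/2)$ as $g \to \infty$. Since $\nu_g$ is the law of $L^{(g,+\infty)\downarrow}$, this gives $\nu_g \to \PD(1/2)$ weakly on $\Delta_{\leq 1}^\infty$, which is exactly the statement of the corollary.

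I do not expect a real obstacle: the two deep inputs --- existence and identification of the limit $L^{(g,m)\downarrow}$ for fixed $g$, and its convergence to $\PD(1/2)$ for large $g$ --- are supplied by Theorems~\ref{thm:Lg}, \ref{thm:LgSquareTiled} and~\ref{thm:PD}. The one small point to handle in the write-up is notational: the variable written $L^{(g,m)}$ in Theorem~\ref{thm:LgSquareTiled} must be read as the sorted vector $L^{(g,m)\downarrow}$ of Theorem~\ref{thm:Lg}, which is forced by the fact that $\boldsymbol{A}^\downarrow(M)$ is itself sorted in decreasing order. Once this is observed, the argument is a two-line composition of the quoted results.
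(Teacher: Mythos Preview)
Your proposal is correct and matches the paper's approach: the paper presents the corollary simply as a reformulation of Theorem~\ref{thm:PD} via Theorem~\ref{thm:LgSquareTiled}, without a separate written proof, and your two-step chaining is exactly that reformulation made explicit. Your observation that $L^{(g,m)}$ in Theorem~\ref{thm:LgSquareTiled} must be read as the sorted vector $L^{(g,m)\downarrow}$ is a valid clarification of a minor notational inconsistency in the paper.
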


\subsubsection{Permutations and multicurves}
\label{sssec:permutations}
Given a permutation $\sigma$ in $S_n$ we denote by $K_n(\sigma)$ the number of orbits it has on $\{1, 2, \ldots, n\}$ or equivalently the number of cycles in its disjoint cycle decomposition. The \emph{Ewens measure with parameter $\theta$} on $S_n$ is the probability measure defined by
\[
\PP_{n,\theta}(\sigma) \coloneqq \frac{\theta^{K_n(\sigma)}}{Z_{n,\theta}}
\quad
\text{where}
\quad
Z_{n,\theta} \coloneqq \sum_{\sigma \in S_n} \theta^{K_n(\sigma)}.
\]
Then under $\PP_{n,\theta}$, as $n \to \infty$ we have that
\begin{itemize}
\item the random variable $K_n$ behaves as a Poisson distribution $\Poisson(\theta \log(n))$ (e.g.\ by mean of a local limit theorem),
\item the normalized sorted vector of cycle lengths of $\sigma$ tends to $\PD(\theta)$,
\item the number of cycles of length $k$ of $\sigma$ converges to $\Poisson(\theta/k)$.
\end{itemize}
See for example~\cite{ABT03}.

By analogy let us denote by $K^{(g,m)}$ the number of non-zero components of $L^{(g,m)}$. In~\cite{DGZZ20}, it is proven that $K^{(g,m)}$ behaves as a Poisson distribution with parameter $\frac{\log(g)}{2}$ (by mean of a local limit theorem) independently of $m$. In other words, it behaves as the number of cycles $K_g(\sigma)$ for a random permutation $\sigma$ under $\PP_{g,1/2}$.

Our Theorem~\ref{thm:PD} provides another connection between $L^{(g,m)}$ and $\PP_{g,1/2}$. Namely, $L^{(g,m)\da}$ is asymptotically close to the normalized sorted vector of the cycle length of $\sigma$ under $\PP_{g,1/2}$.

Finally, let us mention that components of $L^{(g,m)}$ of the order of $o(1)$ are invisible in the convergence towards $\PD(1/2)$. It is a consequence of Theorem~\ref{thm:PD} that the macroscopic components of the order of a constant carry the total mass. Building on the intuition that in the large genus asymptotic regime random multicurves on a surface $X$ of genus $g$ behave like the cycles of a random permutation in the symmetric group $S_g$, one should expect to have a Poisson limit for components of order $g^{-1}$ and that there is no component of order $g^{-1-\epsilon}$. In a work in progress, we provide an affirmative answer to this intuition. However, because lengths are continuous parameters, the limit is a continuous Poisson process and not a discrete one supported on $\NN$ as in the permutation case.

\subsection{Proof overview and structure of the paper}
The first step of the proof consists in writing an explicit expression for the random variable $L^{(g,m)\downarrow}$ that appears in Theorem~\ref{thm:Lg}, see Theorem~\ref{thm:LgMorePrecise} in Section~\ref{sec:RandomMulticurves}. The formula follows from the work of M.~Mirzakhani on pants decompositions~\cite{Mir08} and the result of F.~Arana-Herrera~\cite{AH21} and M.~Liu~\cite{Liu19} on length distribution for each fixed topological type of multicurves. The expression of $L^{(g,m)\downarrow}$ can be seen as a refinement of the formula for the Masur--Veech volume of the moduli space of quadratic differentials from~\cite{DGZZ21}.

The formula for $L^{(g,m)\downarrow}$ involves a super-exponential number of terms in $g$ (one term for each topological type of multicurve on a surface of genus $g$). However, in the large genus limit only $O(\log(g))$ terms contribute. This allows us to consider a simpler random variable $\tilde{L}^{(g,m,\kappa)\downarrow}$ which, asymptotically, coincides with $L^{(g,m)\downarrow}$. See Theorem~\ref{thm:reduction} in Section~\ref{sec:reduction}. This reduction is very similar to the one used for the large genus asymptotics of Masur--Veech volumes in~\cite{Agg21} and~\cite{DGZZ20}.

The core of our proof consists in proving the convergence of moments of the simpler variable $\tilde{L}^{(g,m,\kappa)\downarrow}$. We do not use directly $\tilde{L}^{(g,m,\kappa)\downarrow}$ but its size-biased version $\tilde{L}^{(g,m,\kappa)*}$. The definition of size bias and the link with the Poisson--Dirichlet distribution is explained in Section~\ref{sec:PDandGEM}. In Section~\ref{sec:LargeGenus}, we show that the moments $\tilde{L}^{(g,m,\kappa)*}$ converge to the moments of $\GEM(1/2)$ which is the size-biased version of the Poisson--Dirichlet process $\PD(1/2)$, see Theorem~\ref{thm:GEM}.

\subsection{Acknowledgement}
We warmly thank Anton Zorich who encouraged us to join our forces and knowledge from~\cite{Liu19} and~\cite{DGZZ20} to study the lengths statistics of random multicurves. The second author would like to thank Gr\'egoire Sergeant-Perthuis and Maud Szusterman for helpful conversations about probability theory.

The work of the first named author is partially supported by the ANR-19-CE40-0003 grant.

\section{Background material}
In this section we introduce notations and state results from the literature that are used in our proof.

\subsection{Multicurves and stable graphs}
\label{ssec:multicurvesAndStableGraphs}
Recall from the introduction that a multicurve on a hyperbolic surface $X$ of genus $g$ is a finite multiset of free homotopy classes of disjoint simple closed curves. We denote by $\cML_X(\ZZ)$ the set of multicurves on $X$. The homotopy classes that appear in a multicurve $\gamma$ are called components. There are at most $3g-3$ of them. The multiplicity of a component is the number of times it is repeated in $\gamma$, and $\gamma$ is primitive if all the multiplicities are $1$.

Let us also recall our notations:
\begin{itemize}
\item $\ell_X(\gamma)$: total length of $\gamma$,
\item $\Ell^\da_X(\gamma)$: length vector of the components of $\gamma$,
\item $\multiplicity(\gamma)$: maximum multiplicity of component in $\gamma$,
\item $\Multiplicity(\gamma)$: multiset of multiplicities of components in $\gamma$.
\end{itemize}

The mapping class group $\Mod(X)$ of $X$ acts on multicurves. We call \emph{topological type} of a multicurve its equivalence class under the $\Mod(X)$-action. For each fixed genus $g$, there are finitely many topological types of primitive multicurves and countably many topological types of multicurves. They are conveniently encoded by respectively stable graphs and weighted stable graphs that we define next. Informally given a multicurve $\gamma$ with components $\gamma_1,\dots,\gamma_k$ and multiplicities $m_1,\dots,m_k$ we build a dual graph $\Gamma$ as follows:
\begin{itemize}
\item we add a vertex for each connected component of the complement $X \smallsetminus (\gamma_1 \cup \cdots \cup \gamma_k)$; the vertex $v$ carries an integer weight the genus $g_v$ of the corresponding component,
\item we add an edge for each component $\gamma_i$ of the multicurve between the two vertices corresponding to the connected components bounded by $\gamma_i$; this edge carries a weight $m_i$.
\end{itemize}

More formally, a \emph{stable graph} $\Gamma$ is a 5-tuple $(V, H, \iota, \sigma, \{g_v\}_{v \in V})$ where
\begin{itemize}
\item $V$ is a finite set called \emph{vertices},
\item $H$ is a finite set called \emph{half-edges},
\item $\iota : H \to H$ is an involution without fixed points on $H$; each pair $\{h, \iota(h)\}$ is called an \emph{edge}
and we denote by $E(\Gamma)$ the set of edges,
\item $\sigma: H \to V$ is a surjective map ($\sigma(h)$ is the vertex at which $h$ is rooted),
\item $g_v \in \ZZ_{\geq 0}$,
\end{itemize}
such that
\begin{itemize}
\item (\emph{connectedness}) for each pair of vertices $u,v \in V$ there exists
a sequence of edges, $\{h_1, h'_1\}$, $\{h_2, h'_2\}$, \ldots, $\{h_n, h'_n\}$
such that $\sigma(h_1) = u$, $\sigma(h'_n) = v$ and for $i \in \{1, \ldots, n-1\}$
we have $\sigma(h'_i) = \sigma(h_{i+1})$,
\item (\emph{stability}) for each vertex $v\in V$ we have
\[
    2g_v - 2 + \deg(v) > 0
\]
where $\deg(v) \coloneqq |\sigma^{-1}(v)|$ is the \emph{degree} of the vertex $v$.
\end{itemize}
Given a stable graph $\Gamma$, its \emph{genus} is
\[
    g(\Gamma)
    \coloneqq
    |E| - |V| + 1 + \sum_{v\in V} g(v).
\]
An \emph{isomorphism} between two stable graphs $\Gamma=(V, H, \iota, \sigma, g)$ and $\Gamma' = (V', H', \iota', \sigma', g')$
is a pair of bijections $\phi: V \to V'$ and $\psi: H \to H'$ such that
\begin{itemize}
\item $\psi \circ \iota = \iota' \circ \psi$ (in other words, $\psi$ maps an edge to an edge)
\item $\phi \circ \sigma = \sigma' \circ \psi$,
\item for each $v \in V$, we have $g'_{\phi(v)} = g_v$.
\end{itemize}
Note that $\psi$ determines $\phi$ but it is convenient to record automorphism as a pair $(\phi, \psi)$.
We denote by $\Aut(\Gamma)$ the set of automorphisms of $\Gamma$ and by $\cG_g$ the finite set of isomorphism classes of stable graphs of genus $g$.

A \emph{weighted stable graph} is a pair $(\Gamma, \boldsymbol{m})$ where $\Gamma$ is a stable graph and $\boldsymbol{m} \in\NN^{E(\Gamma)}$.
An \emph{isomorphism} between two weighted stable graphs $(\Gamma, \boldsymbol{m})$ and $(\Gamma', \boldsymbol{m}')$ is
an isomorphism $(\phi, \psi)$ between $\Gamma$ and $\Gamma'$ such that for each edge $e$ of $\Gamma'$
we have $\boldsymbol{m}_{e} = \boldsymbol{m}'_{\psi(e)}$ (where we use $\psi(e)$ to denote $\{\psi(h), \psi(h')\}$
for the edge $e = \{h,h'\} \subset H'$). We denote by $\Aut(\Gamma, \boldsymbol{m})$ the set of
automorphisms of the weighted graph $(\Gamma, \boldsymbol{m})$. There is a one-to-one correspondence between
topological types of multicurves and weighted stable graphs. Primitive multicurves correspond to the case where
all edges carry weight $1$.

\subsection{$\psi$-classes and Kontsevich polynomial}
\label{subsec:PsiClasses}
The formula for the random variable $L^{(g,m)\downarrow}$ that appears in Theorem~\ref{thm:Lg} involves intersection numbers of $\psi$-classes that we introduce now. These rational numbers are famously related to the Witten conjecture~\cite{Wit91} proven by Kontsevich~\cite{Kon92}.

Let $\overline{\cM}_{g,n}$ denote the Deligne--Mumford compactification of moduli space of smooth complex curves of genus $g$ with $n$ marked points.
There exist $n$ so-called \emph{tautological line bundles} $\cL_1,\dots,\cL_n \to \overline{\cM}_{g,n}$ over $\overline{\cM}_{g,n}$ such that the fiber of $\cL_i$ at $(C;x_1,\dots,x_n) \in \overline{\cM}_{g,n}$ is the cotangent space of $C$ at the $i$-th marked point $x_i$. The $i$-th \emph{psi-class} $\psi_i$ is defined as the first Chern class of the $i$-th tautological line bundle $c_1(\cL_i)\in H^2(\overline{\cM}_{g,n},\QQ)$. We use the following standard notation
\[
    \langle\tau_{d_1} \cdots \tau_{d_n}\rangle_{g,n}
    \coloneqq
    \int_{\overline{\cM}_{g,n}} \psi_1^{d_1} \cdots \psi_n^{d_n}
\]
when $d_1+\cdots +d_n = \dim_\CC\overline{\cM}_{g,n} = 3g-3+n$. All
these intersection numbers are positive rational numbers and can be
computed by recursive equations from $\langle \tau_0^3 \rangle_{0,3} = 1$
and $\langle \tau_1 \rangle_{1,1} = \frac{1}{24}$, see for example~\cite{ItzZub92}.

For our purpose, it is convenient to consider the \emph{Kontsevich polynomial} $V_{g,n}\in\QQ[x_1,\dots,x_n]$ that gathers the intersection number into a symmetric polynomial on $n$ variables. More precisely,
\begin{align*}
V_{g,n}(x_1,\dots,x_n) &\coloneqq \frac{1}{2^{3g-3+n}} \sum_{\substack{(d_1,\dots,d_n)\in\ZZ^n_{\geq 0} \\ d_1+\cdots+d_n=3g-3+n}} \frac{\langle\tau_{d_1}\cdots\tau_{d_n}\rangle_{g,n}}{d_1!\cdots d_n!} \cdot x_1^{2d_1}\cdots x_n^{2d_n} \\
&= \int_{\overline{\cM}_{g,n}} \exp \left( \sum_{i=1}^n \frac{x_i^2}{2} \psi_i \right).
\end{align*}
For later use we gather the list of small Kontsevich polynomials below
\begin{align*}
V_{0,3}(x_1,x_2,x_3) &= 1, \\
V_{0,4}(x_1,x_2,x_3,x_4) &= \frac{1}{2} (x_1^2 + x_2^2 + x_3^2 + x_4^2), \\
V_{1,1}(x_1) &= \frac{1}{48} x_1^2, \\
V_{1,2}(x_1,x_2) &= \frac{1}{192} (x_1^2 + x_2^2)^2. \\
\end{align*}

\subsection{Random multicurves}
M.\,Mirzakhani proved the polynomial growth of the number of multicurves on hyperbolic surfaces with respect to its length.
This result and some extensions of it are nicely presented in the book of V.~Erlandsson and J.~Souto~\cite{ES}.

Let $X$ be a hyperbolic surface of genus $g$. We define
\begin{equation} \label{eq:sXRgamma}
s_X(R,\gamma) \coloneqq \# \{\eta \in \Mod(X) \cdot \gamma : \ell_X(\eta) \leq N\}.
\end{equation}
\begin{thm}[{\cite[Theorem 1.1, 1.2 and 5.3]{Mir08}}]\label{thm:s=cN}
    Let $X$ be a hyperbolic surface.
    For any multicurve $\gamma\in\cML_X(\ZZ)$ there exists a positive rational constant $c(\gamma)$ such that we have as $R \to \infty$,
    \[
        |s_X(R,\gamma)|
        \sim B(X) \cdot \frac{c(\gamma)}{b_{g}} \cdot R^{6g-6}
    \]
    where $B(X)$ is the Thurston volume of the unit ball in the space of measured laminations $\cML_X$ with respect to the length function $\ell_X$, and
    \[
        b_g
        =
        \sum_{[\gamma]\in\cML_X(\ZZ)/\Mod(X)} c(\gamma)
        =
        \int_{\cM_g} B(X)\, dX.
    \]
\end{thm}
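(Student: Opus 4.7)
The plan is to follow the strategy laid down in Mirzakhani's original paper: combine an equidistribution statement for mapping class group orbits on the space of measured laminations with an unfolding/integration identity on moduli space to both prove the asymptotic and identify its coefficient.

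To begin, I would recall the \emph{Thurston measure} $\mu_{Th}$ on $\cML_X$. It is a $\Mod(X)$-invariant Borel measure in the Lebesgue class of any train-track chart, homogeneous of degree $6g-6$ under positive scaling. By construction, $B(X) = \mu_{Th}(\{\lambda \in \cML_X : \ell_X(\lambda) \le 1\})$ and, again by homogeneity, $\{\lambda : \ell_X(\lambda) \le R\} = R \cdot \{\lambda : \ell_X(\lambda) \le 1\}$. The object $s_X(R,\gamma)$ is therefore the number of points of the countable, discrete $\Mod(X)$-orbit $\Mod(X)\cdot\gamma \subset \cML_X(\ZZ)$ that fall inside the $R$-dilate of the unit ball.

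The core step is the pointwise equidistribution of rescaled orbits. I would consider the counting measures
\[
\nu_R^{\gamma} \coloneqq \frac{1}{R^{6g-6}} \sum_{\eta \in \Mod(X) \cdot \gamma} \delta_{\eta/R}
\]
on $\cML_X$ and show that $\nu_R^\gamma$ converges weakly as $R \to \infty$ to $\kappa(\gamma)\,\mu_{Th}$, for some positive constant $\kappa(\gamma)$ depending only on the topological type of $\gamma$. This uses the Masur--Veech ergodicity of the $\Mod(X)$-action on $(\cML_X,\mu_{Th})$, which, via the standard dictionary with the Teichm\"uller geodesic flow, promotes the symmetry of the orbit into equidistribution against any Thurston-continuous test function. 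Evaluating $\nu_R^\gamma$ on the indicator of $\{\ell_X \le 1\}$ yields
\[
s_X(R,\gamma) = R^{6g-6}\, \nu_R^\gamma(\{\ell_X \le 1\}) \sim \kappa(\gamma)\, B(X)\, R^{6g-6}.
\]

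To identify $\kappa(\gamma)$ with $c(\gamma)/b_g$, I would integrate this asymptotic against the Weil--Petersson volume form on $\cM_g$. The sum over $\eta \in \Mod(X)\cdot\gamma$ unfolds the integral: parametrizing $X \setminus \gamma$ by the hyperbolic moduli of its pieces together with the lengths $x_1,\dots,x_k$ of the components of $\gamma$, Mirzakhani's integration formula expresses $\int_{\cM_g} s_X(R,\gamma)\, dX$ as a polynomial in $R$ of degree $6g-6$, whose coefficients involve the Weil--Petersson volume polynomials of the pieces (equivalently, in top degree, the Kontsevich polynomials $V_{g_v,n_v}$ of Section~\ref{subsec:PsiClasses}). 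Define $c(\gamma)$ as the leading coefficient of that polynomial; by positivity of intersection numbers it is a positive rational number. Applying dominated convergence (using continuity of $B$ and integrability on $\cM_g$) to $s_X(R,\gamma)/R^{6g-6} \to \kappa(\gamma)B(X)$ gives
\[
c(\gamma) = \kappa(\gamma)\int_{\cM_g} B(X)\, dX = \kappa(\gamma)\, b_g,
\]
so $\kappa(\gamma) = c(\gamma)/b_g$. Summing the asymptotic over all topological types $[\gamma]$ and comparing with $s_X(R) \sim B(X)\,R^{6g-6}$ yields the identity $b_g = \sum_{[\gamma]} c(\gamma)$.

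The principal obstacle is the pointwise (not merely averaged) equidistribution $\nu_R^\gamma \to \kappa(\gamma)\mu_{Th}$ for a \emph{fixed} $X$. The unfolding/integration formula readily gives the asymptotic after integration against $dX_{WP}$, but upgrading this to a pointwise statement requires genuine dynamical input: Masur's ergodicity of the $\Mod(X)$-action on $\cML_X$, together with the mixing of the Teichm\"uller flow to handle continuity issues at infinity in $\cML_X$. Everything else -- the identification of $c(\gamma)$ via Weil--Petersson volumes, the rationality, and the consistency identity for $b_g$ -- reduces to Mirzakhani's integration formula applied piece by piece to $X \setminus \gamma$.
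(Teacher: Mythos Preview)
The paper does not give its own proof of this statement: Theorem~\ref{thm:s=cN} is quoted directly from Mirzakhani's work~\cite{Mir08} (Theorems~1.1, 1.2 and 5.3 there) and is used as a black box. There is therefore nothing in the present paper to compare your proposal against.

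That said, your sketch is a fair outline of Mirzakhani's original strategy: the homogeneity of the Thurston measure, the weak convergence of the rescaled orbit measures $\nu_R^\gamma$ to a multiple of $\mu_{Th}$, and the identification of the constant via the unfolding/integration formula are indeed the main ingredients. You also correctly flag the genuine difficulty, namely that the averaged (integrated over $\cM_g$) asymptotic comes cheaply from the integration formula, while the pointwise statement for a fixed $X$ requires real dynamical input. One caution: invoking ``Masur--Veech ergodicity'' and ``mixing of the Teichm\"uller flow'' is the right flavor but somewhat loose as stated; Mirzakhani's argument in~\cite{Mir08} does not run through mixing of the Teichm\"uller flow in the way you suggest, and making the passage from ergodicity of the $\Mod(X)$-action on $\cML_X$ to weak convergence of $\nu_R^\gamma$ rigorous requires more care than your sketch indicates (in particular, handling the boundary of the length ball and the non-compactness of $\cML_X$). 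Also, your appeal to dominated convergence for $s_X(R,\gamma)/R^{6g-6}$ over $\cM_g$ needs a uniform integrable bound, which is itself a nontrivial input in Mirzakhani's work.
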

The above theorem allows to give sense to the notion of a random multicurve. Namely we endow the set of topological types of multicurves $\cML_X(\ZZ) / \Mod(X)$ with the probability measure
which assigns $c(\gamma)/b_g$ to $[\gamma]$. We now provide the explicit expression for this probability. For $\Gamma \in \cG_g$ a stable graph we define the
polynomial $F_\Gamma$ on the variables $\{x_e\}_{e \in E(\Gamma)}$ by
\begin{equation} \label{eq:stableGraphPolynomial}
    F_\Gamma \left(\{x_e\}_{e \in E(\Gamma)} \right) =
    \prod_{e \in E(\Gamma)} x_e \cdot \prod_{v \in V(\Gamma)} V_{g_v,n_v}(\bm{x}_v),
\end{equation}
where $\bm{x}_v$ is the multiset of variables $x_e$ where $e$ is an edge adjacent to $v$ and
$V_{g_v,n_v}$ are the Kontsevich polynomial defined in Section~\ref{subsec:PsiClasses}. In
the case $e$ is a loop based at $v$, the variable $x_e$ is repeated twice in $\bm{x}_v$.

\begin{figure}[!ht]
\begin{tabular}{c|c|l}
\hline
multicurve & stable graph & polynomial $F_\Gamma$ \\
\hline \hline
\raisebox{-.5\height}{\includegraphics[scale=0.3]{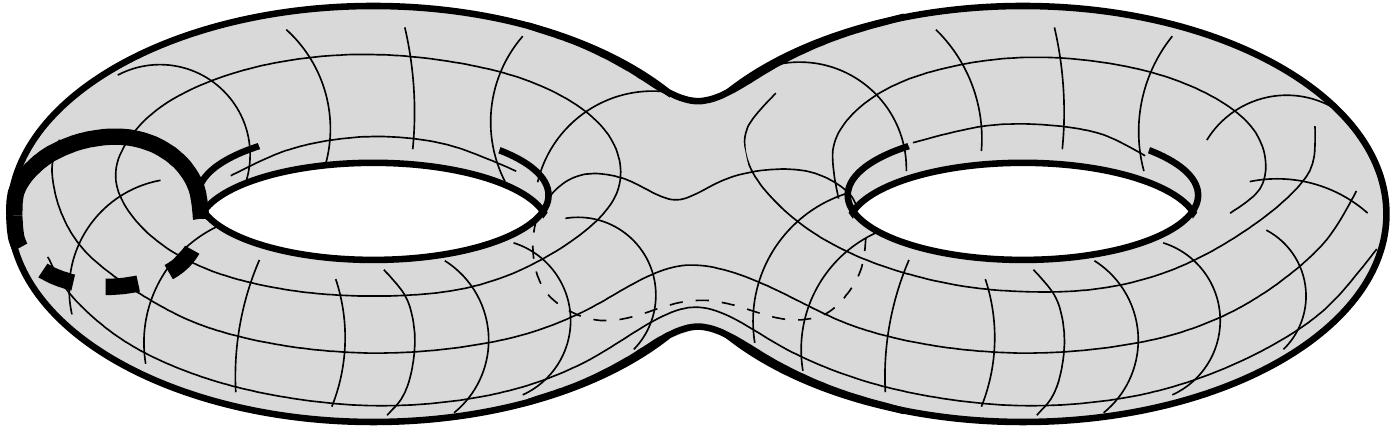}} &
\raisebox{-.5\height}{\includegraphics[scale=1]{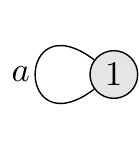}} &
\begin{minipage}{0.4\textwidth}
$x_a V_{1,2}(x_a, x_a)$ \\
$= \frac{1}{48} x_a^5$
\end{minipage}
\\
\hline
\raisebox{-.5\height}{\includegraphics[scale=0.3]{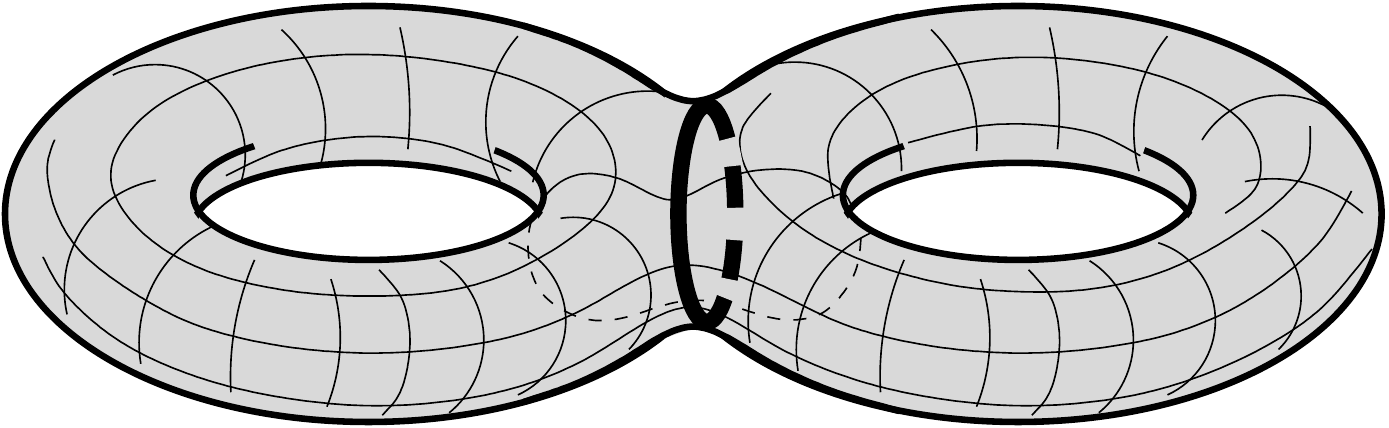}} &
\raisebox{-.5\height}{\includegraphics[scale=1]{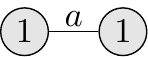}} &
\begin{minipage}{0.4\textwidth}
$x_a V_{1,1}(x_a) V_{1,1}(x_a)$ \\
$= \frac{1}{2304} x_a^5$
\end{minipage}
\\
\hline
\raisebox{-.5\height}{\includegraphics[scale=0.3]{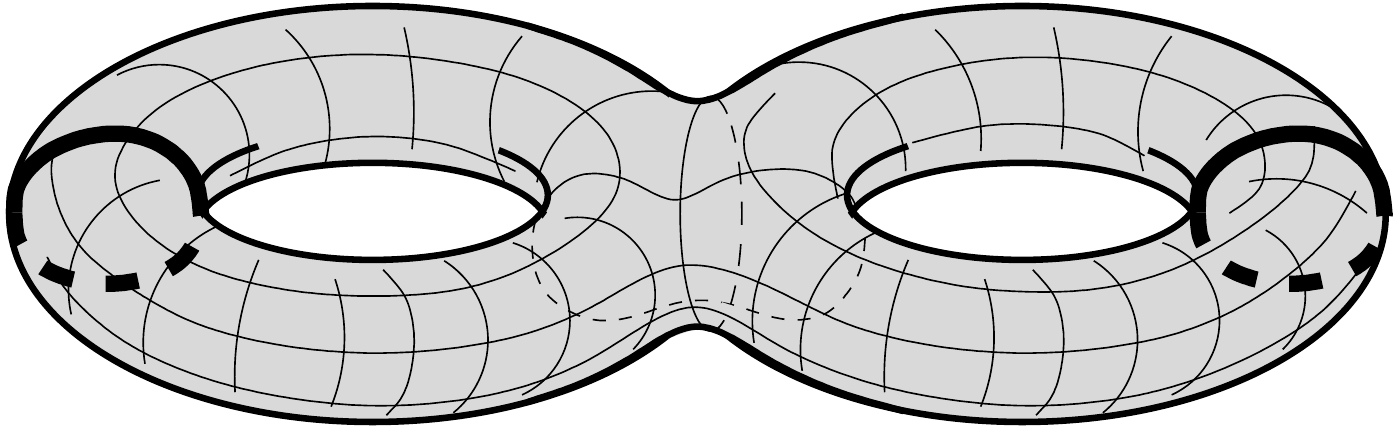}} &
\raisebox{-.5\height}{\includegraphics[scale=1]{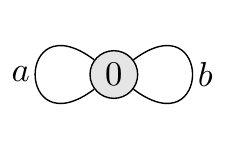}} &
\begin{minipage}{0.4\textwidth}
$x_a x_b V_{0,4}(x_a, x_a, x_b, x_b)$ \\
$= x_a^3 x_b + x_a x_b^3$
\end{minipage}
\\
\hline
\raisebox{-.5\height}{\includegraphics[scale=0.3]{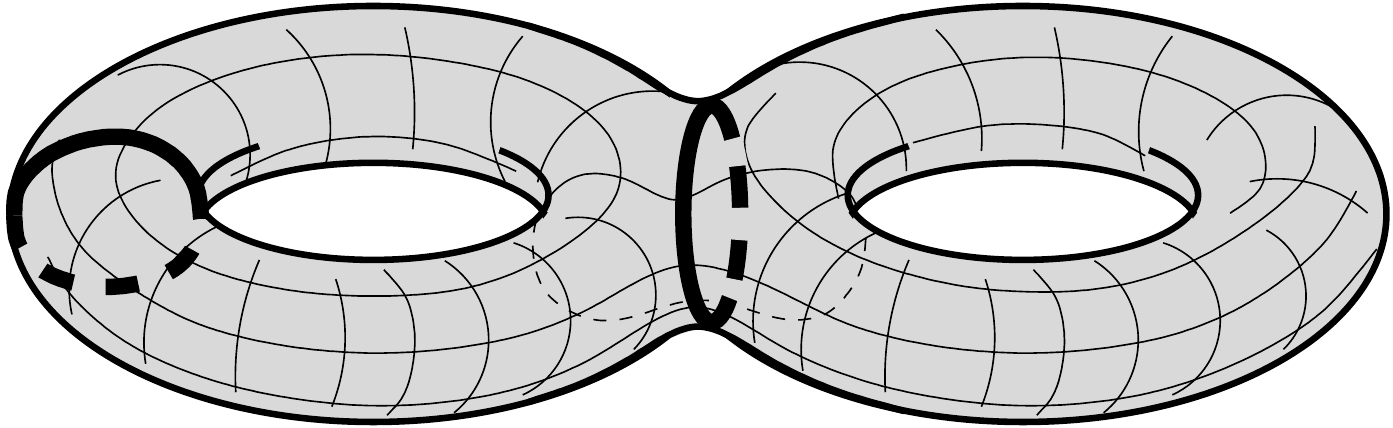}} &
\raisebox{-.5\height}{\includegraphics[scale=1]{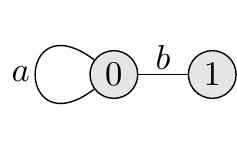}} &
\begin{minipage}{0.4\textwidth}
$x_a x_b V_{0,3}(x_a, x_a, x_b) V_{1,1}(x_b)$ \\
$= \frac{1}{48} x_a x_b^3$
\end{minipage}
\\
\hline
\raisebox{-.5\height}{\includegraphics[scale=0.3]{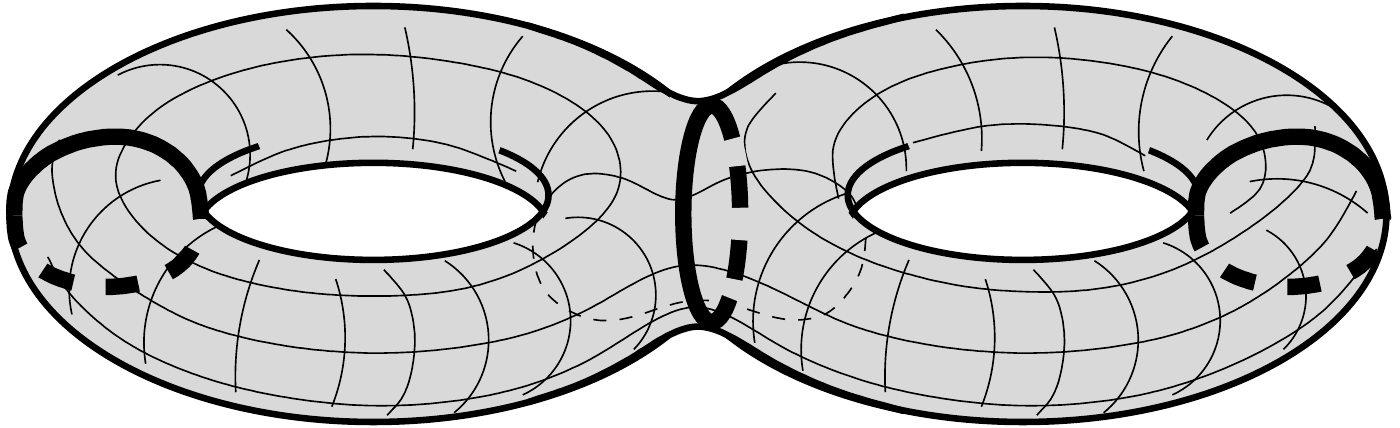}} &
\raisebox{-.5\height}{\includegraphics[scale=1]{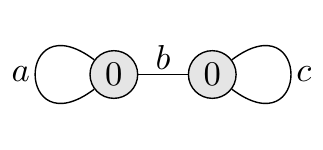}} &
\begin{minipage}{0.4\textwidth}
$x_a x_b x_c V_{0,3}(x_a,x_a,x_b) V_{0,3}(x_b,x_c,x_c)$ \\
$= x_a x_b x_c$
\end{minipage}
\\
\hline
\raisebox{-.5\height}{\includegraphics[scale=0.3]{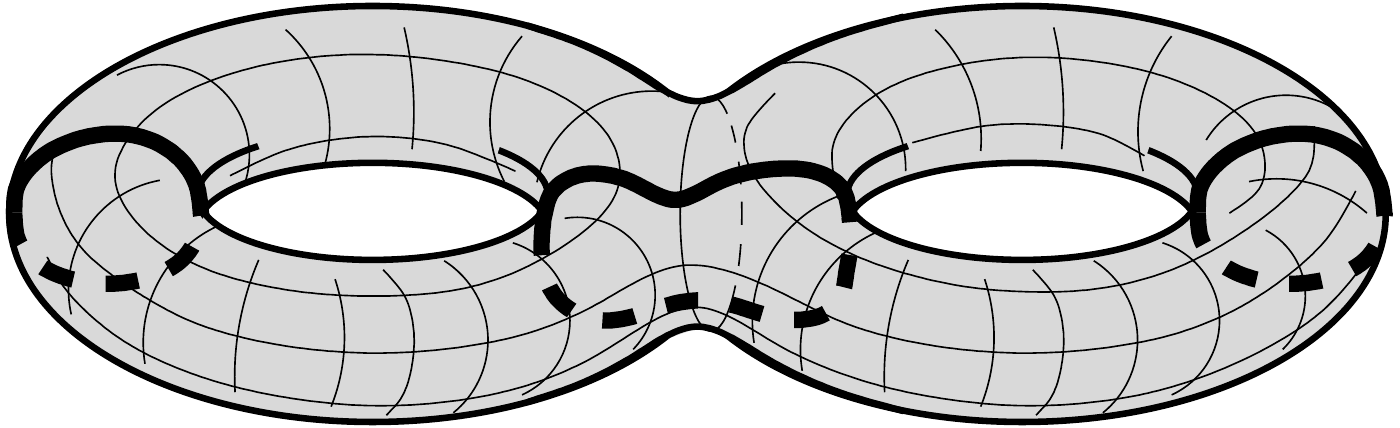}} &
\raisebox{-.5\height}{\includegraphics[scale=1]{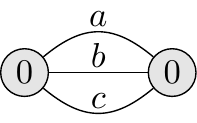}} &
\begin{minipage}{0.4\textwidth}
$x_a x_b x_c V_{0,3}(x_a,x_b,x_c) V_{0,3}(x_a,x_b,x_c)$ \\
$= x_a x_b x_c$
\end{minipage}
\\
\hline
\end{tabular}
\caption{The list of topological types of primitive multicurves in genus 2,
their associated stable graphs and their corresponding polynomial $F_\Gamma$.
The labels on edges are used as variable indices in $F_\Gamma$.}
\end{figure}

\begin{rem} \label{rk:normalization}
The polynomial $F_\Gamma$ appeared first in Mirzakhani's work~\cite{Mir08}, see in particular Theorem 5.3. They were related to square-tiled surfaces and Masur--Veech volumes in~\cite{DGZZ21} though with a different normalization. Namely, the polynomial $P_\Gamma$ from~\cite{DGZZ21} is related to $F_\Gamma$ by
\[
    P_\Gamma = 2^{4g-2} \cdot \frac{(4g-4)!}{(6g-7)!} \cdot \frac{1}{|\Aut (\Gamma) |} \cdot F_\Gamma.
\]
The normalization of $F_\Gamma$ is identical to the conventions used in~\cite{ABCDGLW} and simplifies the computations of the present article.
\end{rem}

Following~\cite{DGZZ21}, for a weighted stable graph $(\Gamma, \boldsymbol{m})$ and we denote by $\cY_{\boldsymbol{m}}\colon \QQ[\{x_e\}_{e \in E(\Gamma)}] \to \QQ$ the linear operator defined on monomials
\[
    \cY_{\boldsymbol{m}}(x_1^{n_1}x_2^{n_2}\cdots x_k^{n_k})
    \coloneqq
    \frac{n_1! n_2!\cdots n_k!}{\boldsymbol{m}_1^{n_1+1}\boldsymbol{m}_2^{n_2+1}\cdots \boldsymbol{m}_k^{n_k+1}}
\]
and for $m \in \NN \cup \{+\infty\}$, set
\[
    \cZ_m \coloneqq \sum_{\substack{\boldsymbol{m} \in \NN^{E(\Gamma)} \\ \forall e \in E(\Gamma), \boldsymbol{m}_e \le m}} \cY_{\boldsymbol{m}}.
\]
We derive the following directly from~\cite{DGZZ21}:
\begin{thm} \label{thm:c=vol}
Let $\gamma$ be a multicurve in genus $g$ and $(\Gamma, \boldsymbol{m})$ the dual weighted stable graph. Then
\[
c(\gamma) = \frac{1}{(6g-6)!}\, \frac{1}{|\Aut(\Gamma, \boldsymbol{m})|} \, \cY_{\boldsymbol{m}}(F_\Gamma).
\]
Furthermore
\[
b_g = \frac{1}{(6g-6)!}\, \sum_{\Gamma \in \cG_g} \frac{1}{|\Aut(\Gamma)|} \, \cZ(F_\Gamma).
\]
\end{thm}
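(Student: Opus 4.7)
The plan is to derive both identities from the main formula of~\cite{DGZZ21}, using the dictionary between the polynomial $P_\Gamma$ of that paper and our $F_\Gamma$ recorded in Remark~\ref{rk:normalization}.

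First, for the formula for $c(\gamma)$: I would quote the explicit expression given in~\cite{DGZZ21}, which itself follows from Mirzakhani's integration formula~\cite{Mir08} applied to the top-degree (symplectic) part of Weil--Petersson volumes — this is exactly what turns intersection numbers of $\psi$-classes into the Kontsevich polynomials $V_{g_v,n_v}$, and the edge variables $x_e$ into the arguments of the operator $\cY_{\boldsymbol m}$ (which is nothing but the Laplace transform $x_e^{n} \mapsto \int_0^\infty t^n e^{-\boldsymbol m_e t}\,dt$). Their formula is stated in terms of $P_\Gamma$; substituting the conversion from Remark~\ref{rk:normalization} and collecting the universal constants produces the prefactor $1/(6g-6)!$, which matches the $R^{6g-6}$ growth rate in Theorem~\ref{thm:s=cN}. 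The factor $|\Aut(\Gamma)|^{-1}$ appearing in the conversion formula combines with the orbit length $|\Aut(\Gamma)|/|\Aut(\Gamma,\boldsymbol m)|$ to give the claimed $|\Aut(\Gamma,\boldsymbol m)|^{-1}$.

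Second, for $b_g$: I start from the identity $b_g = \sum_{[\gamma]} c(\gamma)$ given in Theorem~\ref{thm:s=cN}. Topological types of multicurves in genus $g$ are parametrized by pairs $(\Gamma, [\boldsymbol m])$ where $\Gamma \in \cG_g$ and $[\boldsymbol m]$ is an $\Aut(\Gamma)$-orbit in $\NN^{E(\Gamma)}$. Substituting the first identity and applying orbit-stabilizer gives
\[
    \sum_{[\boldsymbol m] \in \NN^{E(\Gamma)}/\Aut(\Gamma)} \frac{1}{|\Aut(\Gamma,\boldsymbol m)|} \cY_{\boldsymbol m}(F_\Gamma)
    = \frac{1}{|\Aut(\Gamma)|} \sum_{\boldsymbol m \in \NN^{E(\Gamma)}} \cY_{\boldsymbol m}(F_\Gamma)
    = \frac{1}{|\Aut(\Gamma)|} \cZ(F_\Gamma).
\]
This relies on two symmetry facts: the Kontsevich polynomials $V_{g_v,n_v}$ are symmetric in their arguments, so $F_\Gamma$ is invariant under the $\Aut(\Gamma)$-action on edge variables; and $\cY_{\boldsymbol m}$ depends on $\boldsymbol m$ only through the collection of edge weights, in an equivariant way. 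Summing over $\Gamma$ and multiplying by $1/(6g-6)!$ yields the claimed formula.

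The main obstacle is simply the bookkeeping of universal constants — matching the Masur--Veech volume normalization from~\cite{DGZZ21} (which comes with the factor $2^{4g-2}(4g-4)!/(6g-7)!$ and a $1/|\Aut(\Gamma)|$ absorbed into $P_\Gamma$) against the Thurston-measure normalization governing the counting asymptotics of Theorem~\ref{thm:s=cN}. Once this is tracked carefully, both identities are essentially tautological rewritings.
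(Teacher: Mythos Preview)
Your proposal is correct and matches the paper's approach exactly: the paper's proof is a one-line reference to~\cite[Theorem~1.22]{DGZZ21} together with Remark~\ref{rk:normalization} (the $P_\Gamma$--$F_\Gamma$ dictionary) and Remark~\ref{rk:automorphisms} (the orbit--stabilizer identity you wrote out for passing between $\sum_{[\boldsymbol m]} |\Aut(\Gamma,\boldsymbol m)|^{-1}\cY_{\boldsymbol m}$ and $|\Aut(\Gamma)|^{-1}\cZ$). You have simply unpacked those two remarks in more detail than the paper does.
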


\begin{rem} \label{rk:automorphisms}
In Theorem~\ref{thm:c=vol} we fix a misconception in~\cite{DGZZ21} about automorphisms of multicurves (or equivalently weighted stable graph). Indeed, the way we defined automorphisms of stable graphs and weighted stable graphs in Section~\ref{ssec:multicurvesAndStableGraphs} make it so that the following formula is valid
\[
\sum_{\Gamma} \frac{1}{|\Aut(\Gamma)|} \cZ(F_\Gamma) = \sum_{(\Gamma, \boldsymbol{m})} \frac{1}{|\Aut(\Gamma, \boldsymbol{m})|} \cY_{\boldsymbol{m}}(F_\Gamma)
\]
where the sums are taken over isomorphism classes of respectively stable graphs of genus $g$ and weighted stable graphs of genus $g$.
\end{rem}

\begin{proof}
Up to the correction of Remark~\ref{rk:automorphisms} this is exactly~\cite[Theorem 1.22]{DGZZ21} (see Remark~\ref{rk:normalization} for the difference between $P_\Gamma$ and $F_\Gamma$).
\end{proof}

\begin{figure}[!ht]
\begin{tabular}{c|l}
\hline
stable graph $\Gamma$ & value of $\cY_{\boldsymbol{m}}(\Gamma)$ \\
\hline \hline
\raisebox{-.5\height}{\includegraphics[scale=1]{genus_two_graph_11.pdf}} &
$\displaystyle\frac{5}{2} \cdot \frac{1}{\boldsymbol{m}_a^6}$
\\
\hline
\raisebox{-.35\height}{\includegraphics[scale=1]{genus_two_graph_12.pdf}} &
$\displaystyle\frac{5}{96} \cdot \frac{1}{\boldsymbol{m}_a^6}$
\\
\hline
\raisebox{-.5\height}{\includegraphics[scale=1]{genus_two_graph_21.pdf}} &
$\displaystyle 24 \left( \frac{1}{\boldsymbol{m}_a^4 \cdot \boldsymbol{m}_b^2} + \frac{1}{\boldsymbol{m}_a^2 \cdot \boldsymbol{m}_b^4} \right)$
\\
\hline
\raisebox{-.5\height}{\includegraphics[scale=1]{genus_two_graph_22.pdf}} &
$\displaystyle \frac{1}{2} \cdot \frac{1}{\boldsymbol{m}_a^2 \cdot \boldsymbol{m}_b^4}$
\\
\hline
\raisebox{-.5\height}{\includegraphics[scale=1]{genus_two_graph_31.pdf}} &
$\displaystyle \frac{1}{\boldsymbol{m}_a^2 \cdot \boldsymbol{m}_b^2 \cdot \boldsymbol{m}_c^2}$
\\
\hline
\raisebox{-.5\height}{\includegraphics[scale=1]{genus_two_graph_32.pdf}} &
$\displaystyle \frac{1}{\boldsymbol{m}_a^2 \cdot \boldsymbol{m}_b^2 \cdot \boldsymbol{m}_c^2}$
\\
\hline
\end{tabular}
\caption{The list of topological types of primitive multicurves in genus 2
and the associated values $\cY_{\boldsymbol{m}}(\Gamma)$ that is proportional
to $c(\gamma)$ (see Theorem~\ref{thm:c=vol}).}
\end{figure}

\subsection{Asymptotics of $\psi$-correlators and $b_g$}
Our proof of Theorem~\ref{thm:PD} uses crucially the asymptotics of $\psi$-intersections and Masur--Veech volumes from~\cite{Agg21} and further developed in~\cite{DGZZ20}.

\begin{thm}[\cite{Agg21}]  \label{thm:AggarwalAsymptotics}
For $g,n\in\NN$ and $\bm{d} = (d_1, \ldots, d_n)\in\ZZ_{\geq 0}^n$ with $d_1 + \cdots + d_n = 3g-3+n$, let $\epsilon(\bm{d})$ be defined by
\[
    \langle\tau_{d_1} \cdots \tau_{d_n}\rangle_{g,n} = \frac{(6g-5+2n)!!}{(2d_1+1)!!\cdots (2d_n+1)!!} \cdot \frac{1}{g! \cdot 24^g} \cdot (1+\epsilon(\bm{d})).
\]
Then
\[
\lim_{g \to \infty}
\sup_{n < \sqrt{g/800}}
\sup_{\substack{\bm{d} = (d_1, \ldots, d_n) \\ d_1 + \cdots + d_n = 3g-3+n}} \epsilon(\bm{d}) = 0.
\]
\end{thm}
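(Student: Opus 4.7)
The plan is to reduce the statement to proving that the normalized quantity
\[
F(g,n;\bm{d})\coloneqq g!\,24^g\cdot\frac{(2d_1+1)!!\cdots(2d_n+1)!!}{(6g-5+2n)!!}\,\langle\tau_{d_1}\cdots\tau_{d_n}\rangle_{g,n}
\]
converges to $1$, uniformly over admissible $\bm{d}$, as $g\to\infty$ with $n<\sqrt{g/800}$. The main input would be the Virasoro (DVV) recursion satisfied by $\psi$-class intersection numbers. Singling out the marked point of degree $d_1$, the recursion expresses $\langle\tau_{d_1+1}\prod_{i\ge 2}\tau_{d_i}\rangle_{g,n}$ as the sum of a ``join'' term over pairs $(1,j)$ (merging two marked points, lowering $n$ at the same $g$), a ``non-separating pinch'' term (raising $n$ by one, dropping $g$ by one), and a ``separating pinch'' sum over decompositions $g=g'+g''$ together with a partition of the other marked points between the two sides.

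After substituting the normalization into this recursion and simplifying ratios of double factorials, careful bookkeeping should show that the join sum contributes a convex combination of values $F(g,n-1;\bm{d}')$ with total weight $1-O(n/g)$, while each pinch sum carries an explicit power-of-$g$ smallness: the non-separating pinch is $O(1/g)$ via the ratio $(6(g-1)-5+2(n+2))!!/(6g-5+2n)!!$, and the separating pinch is $O(n^2/g)$ once admissible splittings are counted against the $g'!\,g''!/g!$ prefactor. This rewrites the recursion as
\[
F(g,n;\bm{d}) = \Bigl(\text{convex combination of } F(g,n-1;\bm{d}')\Bigr) + E_{g,n}(\bm{d}),
\]
where the error term $E_{g,n}$ involves only values of $F$ at strictly smaller $g$.

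From here the argument would close by induction on $g$. Assuming $|F(g',n';\bm{d}')-1|\le\delta_{g'}$ uniformly for all $g'<g$ with $n'<\sqrt{g'/800}$, the join term contributes $1+O(\delta_g)$ after invoking the hypothesis at one fewer marked point, and the error $E_{g,n}$ contributes $o(1)$ provided $n<\sqrt{g/800}$; the constant $800$ is tuned so that the explicit prefactors in the pinch bounds combine controllably. The base cases $g=0,1$ can be handled from $\langle\tau_0^3\rangle_{0,3}=1$ and $\langle\tau_1\rangle_{1,1}=1/24$ together with the string and dilaton equations.

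The main obstacle is uniformity in $\bm{d}$. Very unbalanced tuples, in which a single $d_j$ nearly exhausts the degree sum, force the recursion to produce daughter tuples with equally extreme weights, and the join coefficients must behave uniformly well on all such skew configurations. Concretely, the combinatorial identity expressing $(2d_1+1)!!\,(2d_j+1)!!/(2(d_1+d_j)-1)!!$ as a Beta-like weight must telescope cleanly against the $(6g-5+2n)!!$ prefactor for every $\bm{d}$. I expect this uniform-in-$\bm{d}$ linearization of the DVV recursion to be the technical core of the argument, and the reason the uniformity can only be established in the regime $n\ll\sqrt{g}$ rather than for all $n\lesssim g$.
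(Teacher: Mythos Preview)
The paper does not contain a proof of this theorem: it is stated as a citation from Aggarwal~\cite{Agg21} and used as a black box throughout. There is therefore nothing in the paper to compare your attempt against.

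That said, your outline is broadly in the spirit of Aggarwal's actual argument, which does proceed via the DVV/Virasoro recursion and a careful analysis of how the normalized intersection numbers propagate through the three types of terms (dilaton/string-type moves, non-separating pinches, separating pinches). Two caveats about your sketch as written. First, the induction you describe is not quite well-posed: you want to bound $F(g,n;\bm{d})$ in terms of $F(g,n-1;\bm{d}')$ at the \emph{same} $g$, but your inductive hypothesis is only on strictly smaller $g'$; you would need a simultaneous induction on $(g,n)$ or a more careful two-step scheme, and this is where Aggarwal's proof does real work. Second, the claim that the separating pinch contributes $O(n^2/g)$ is too optimistic as stated; controlling that sum uniformly in $\bm{d}$ requires nontrivial a~priori upper bounds on the intersection numbers themselves (which Aggarwal establishes separately), not just the $g'!\,g''!/g!$ prefactor. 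Your final paragraph correctly identifies the uniformity in $\bm{d}$ as the crux, but the mechanism you propose (a Beta-like telescoping identity) is not how the difficulty is actually resolved.
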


For $m \in \NN \cup \{+\infty\}$ we define
\begin{equation} \label{eq:bgm}
b_{g,m} := \frac{1}{(6g-6)!} \sum_{\Gamma \in \cG_g} \frac{\cZ_m(\Gamma)}{|\Aut(\Gamma)|}.
\end{equation}
Note that $b_g = b_{g,+\infty}$.
\begin{rem}
We warn the reader that the constant denoted $b_{g,m}$ in this article has
nothing to do with the analogue of $b_g$ in the context of surfaces of
genus $g$ with $n$ boundaries which is denoted $b_{g,n}$ in~\cite{Mir16}
and~\cite{DGZZ21}.
\end{rem}
For $m \in \NN \cup \{+\infty\}$ and a real number $\kappa > 1$ we also define
\begin{equation} \label{eq:bgmkappa}
\tilde{b}_{g,m,\kappa} \coloneqq
\frac{1}{(6g-6)!}
\sum_{\substack{\Gamma \in \cG_g \\ |V(\Gamma)| = 1 \\ |E(\Gamma)| \le \kappa \frac{\log(6g-6)}{2}}} \frac{1}{|\Aut(\Gamma)|}
\sum_{\substack{\boldsymbol{m} \in \NN^{E(\Gamma)} \\ \forall e \in E(\Gamma), \boldsymbol{m}_e \leq m}} \cY_{\boldsymbol{m}}(F_\Gamma).
\end{equation}
As we have less terms in its definition, $\tilde{b}_{g,m,\kappa} \le b_{g,m}$.

We will use the asymptotic results of~\cite{Agg21} and~\cite{DGZZ20} in the following form.
\begin{thm}[\cite{Agg21}, \cite{DGZZ20}] \label{thm:A+DGZZtruncatedSum} \label{thm:A+DGZZbgAsymptotics}
Let $m \in \NN \cup \{+\infty\}$ and $\kappa > 1$. Then as $g \to \infty$ we have
\[
b_{g,m} \sim \tilde{b}_{g,m,\kappa} \sim \frac{1}{\pi} \cdot \frac{1}{(6g-6) \cdot (4g-4)!} \cdot \sqrt{\frac{m}{m+1}} \cdot \left( \frac{4}{3} \right)^{4g-4}.
\]
\end{thm}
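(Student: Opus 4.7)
The plan is to deduce the statement from the uniform $\psi$-class asymptotic of Theorem~\ref{thm:AggarwalAsymptotics} together with the dominant-graph analysis of \cite{DGZZ20}, keeping track of the multiplicity cap $m$ throughout. The case $m = +\infty$ (i.e.\ $b_{g}$) is essentially \cite{DGZZ20}; the remaining task is to show that the only effect of the cap is the $\sqrt{m/(m+1)}$ prefactor.

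First I would establish the reduction $b_{g,m} \sim \tilde{b}_{g,m,\kappa}$. On the one hand, stable graphs with two or more vertices are subdominant: each additional vertex of positive genus $g_v$ contributes a Stirling factor $1/(g_v!\,24^{g_v})$ through Aggarwal, and combined with the constraint $g = |E| - |V| + 1 + \sum_v g_v$, splitting the genus between several vertices costs at least a factor $O(1/g)$. On the other hand, for single-vertex graphs, the number of loops is concentrated around $\tfrac{1}{2}\log(6g-6)$, so truncating to $|E| \le \kappa\cdot\tfrac{1}{2}\log(6g-6)$ with $\kappa > 1$ is asymptotically lossless. Since $\cZ_m(F_\Gamma) \le \cZ_\infty(F_\Gamma)$ for every $m$, the tail bounds of \cite{DGZZ20} pass directly to $b_{g,m}$.

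Next I would compute $\tilde{b}_{g,m,\kappa}$ explicitly. For a single-vertex graph $\Gamma$ with $n$ loops one has $g_v = g-n$, $\deg(v) = 2n$, $|\Aut(\Gamma)| = 2^n n!$, and $F_\Gamma = x_1\cdots x_n \cdot V_{g-n,\,2n}(x_1,x_1,\ldots,x_n,x_n)$. Since the truncation forces $n \le \kappa\log(6g-6)/2 \ll \sqrt{g/800}$, Theorem~\ref{thm:AggarwalAsymptotics} applies uniformly and replaces each intersection number $\langle \tau_{a_1}\tau_{b_1}\cdots\tau_{a_n}\tau_{b_n}\rangle_{g-n,2n}$ by its double-factorial model. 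The sum over the pairings $(a_i,b_i)$ of the two half-edges at each loop collapses via the elementary identity $\sum_{a+b=k}\tfrac{1}{(2a+1)!(2b+1)!} = \tfrac{2^{2k+1}}{(2k+2)!}$ (coming from $\sinh^2(y)=\tfrac{1}{2}(\cosh(2y)-1)$), and applying $\cY_{\boldsymbol{m}}$ followed by the sum over $\boldsymbol{m}\in\{1,\ldots,m\}^n$ reduces the computation to a coefficient extraction of the form
\[
    \tilde{b}_{g,m,\kappa} \sim \frac{1}{(6g-6)!}\sum_{n\,\le\,\kappa\log(6g-6)/2}\frac{(6g-2n-5)!!}{2^n\,n!\cdot 2^{3g-3-n}\,(g-n)!\,24^{g-n}}\cdot [y^{3g-3-n}]\,F_m(y)^n,
\]
where $F_m(y) \coloneqq -\tfrac{1}{8y}\sum_{j=1}^{m}\log\!\bigl(1-8y/j^2\bigr)$.

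To finish, I would extract the asymptotic by Laplace's method applied jointly to the sum in $n$ and to the Cauchy integral for $[y^{3g-3-n}]$. Stirling on $(6g-2n-5)!!$, $(g-n)!$ and $24^{g-n}$ pins the dominant range at $n=\Theta(\log g)$, in agreement with the Poisson parameter $\tfrac{1}{2}\log g$ from Section~\ref{sssec:permutations}, and reproduces the main growth $\tfrac{1}{\pi}\cdot\tfrac{1}{(6g-6)(4g-4)!}\cdot(4/3)^{4g-4}$ exactly as in \cite{DGZZ20}. The main obstacle is the precise bookkeeping for the $m$-dependent factor: one must isolate the ratio $F_m(y^*)/F_\infty(y^*)$ at the saddle and, using the telescoping identity $\sum_{k=1}^{m}\tfrac{1}{k(k+1)} = \tfrac{m}{m+1}$, show that it contributes $\sqrt{m/(m+1)}$ to the Gaussian fluctuation. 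For $m=+\infty$ this factor degenerates to $1$, recovering \cite{DGZZ20} verbatim; the finite-$m$ case is the only genuinely new ingredient beyond what appears in the cited references.
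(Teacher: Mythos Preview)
The paper does not give its own proof of this statement: Theorem~\ref{thm:A+DGZZbgAsymptotics} is stated with attribution to \cite{Agg21} and \cite{DGZZ20} and used as a black box, so there is no in-paper argument to compare against. Your outline is essentially the strategy of \cite{DGZZ20} (reduce to single-vertex graphs, apply Aggarwal uniformly in the truncated range, collapse the two half-edge indices at each loop via the $\sinh^2$ identity, then extract the asymptotic from the resulting generating function), and it is sound as a plan.

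Two remarks on details. First, the $m$-dependent constant does not come from the telescoping \emph{sum} $\sum_{k=1}^m \tfrac{1}{k(k+1)}=\tfrac{m}{m+1}$ that you quote, but from the telescoping \emph{product}
\[
\prod_{n=2}^{m}\frac{n^2}{(n-1)(n+1)}=\frac{2m}{m+1},
\]
which is exactly $\exp\bigl(g_m(z)+\log(1-z)\bigr)\big|_{z=1}$ in the notation of the lemma following Theorem~\ref{thm:transfer}; the square root then appears because the exponent in $G_{\theta,p}$ is $\tfrac{1}{2}g_\theta(z^2)$. Second, the paper's Section~6 carries out the coefficient extraction by singularity analysis (transfer theorems \`a la Flajolet--Odlyzko) rather than by a saddle/Laplace argument; either method works here, but the singularity-analysis route makes the $m$-dependence immediate, since the constant is read off directly from the value of $g_m(z)+\log(1-z)$ at the dominant singularity $z=1$ rather than emerging from a ratio of saddle values.
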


\section{Length vectors of random multicurves}
\label{sec:RandomMulticurves}
The aim of this section is to state and prove a refinement of Theorem~\ref{thm:Lg} that provides an explicit description of the random variable $L^{(g,m)}$. For each weighted stable graph $(\Gamma, \boldsymbol{m})$ we define a random variable $U^{(\Gamma, \boldsymbol{m})}$. We then explain how $L^{(g,m)}$ is obtained from them.

Let $\Gamma$ be a stable graph and let $k \geq |E(\Gamma)|$. For
each injection $\iota \colon E(\Gamma) \to \{1,2,\ldots,k\}$,
we define an injection
$g_{\Gamma,\iota} \colon \RR^{E(\Gamma)} \to \RR^k$
by
\[
    g_{\Gamma,\iota}(\{x_e\}_{e \in E(\Gamma)}) = (y_1, y_2, \ldots, y_k),
    \qquad
    \text{where }
    y_i = \left\{ \begin{array}{ll}
            x_{\iota^{-1}(i)} & \text{if $i \in \iota(E(\Gamma))$} \\
            0 & \text{otherwise.}
    \end{array} \right.
\]
Given a measure $\mu$ on $\RR^{E(\Gamma)}$ we define its \emph{$k$-th symmetrization}
to be the measure on $\RR^k$ given by
\[
G_{\Gamma,k}(\mu) \coloneqq \frac{(k - |E(\Gamma)|)!}{k!} \sum_{\iota} (g_{\Gamma,\iota})_* (\mu).
\]
The $k$-th symmetrization is supported on the subspaces of dimension $|E(\Gamma)|$
generated by basis vectors. Because of the coefficient $(k - |E(\Gamma)|)!)/k!$, the
total weights of the measures $\mu$ and $(g_{\Gamma,\iota})_* \mu$ are the same.

We prove the following refinement of Theorem~\ref{thm:Lg}.
\begin{thm}
\label{thm:LgMorePrecise}
Let $L^{(g,m)}$ be the random variable on $\Delta^{3g-3}_{\leq 1}$ with density
\begin{equation} \label{eq:LgAsSum}
\frac{1}{(6g-6) \cdot b_{g,m}}
\sum_{\Gamma \in \cG_g} \frac{1}{|\Aut(\Gamma)|} \sum_{\substack{\boldsymbol{m} \in \NN^{E(\Gamma)} \\ \forall e \in E(\Gamma), \boldsymbol{m}_e \le m}} G_{\Gamma,3g-3}(\mu_{\Gamma,\boldsymbol{m}})
\end{equation}
where $b_{g,m}$ is defined in~\eqref{eq:bgm} and $\mu_{\Gamma,m}$ is the measure on $\Delta^{E(\Gamma)}_{=1}$ with density
\begin{equation} \label{eq:muGammam}
F_\Gamma\left(\left\{\frac{x_e}{\boldsymbol{m}_e}\right\}_{e \in E(\Gamma)}\right)
\cdot
\prod_{e \in E(\Gamma)} \frac{1}{\boldsymbol{m}_e}
\end{equation}
where $F_\Gamma$ is the polynomial defined in~\eqref{eq:stableGraphPolynomial}.
Then
\begin{equation} \label{eq:SumAllCurves}
    \frac{1}{s_X(R, m)} \sum_{\substack{\gamma \in \cML_X(\ZZ) \\ \ell_X(\gamma) \leq R \\ \multiplicity{\gamma} \le m}} \delta_{\hat{\Ell}_X^\da(\gamma)} \xrightarrow[R \to \infty]{} L^{(g,m)\downarrow}
\end{equation}
where $s_X(R, m) \coloneqq \# \{\gamma \in \cML_X(\ZZ) : \ell_X(\gamma) \leq R \text{ and } \multiplicity(\gamma) \leq m\}$ is the number of multicurves on $R$ of length at most $R$ and multiplicity at most $m$ and $L^{(g,m)\downarrow}$ is the vector $L^{(g,m)}$ sorted in decreasing order.
\end{thm}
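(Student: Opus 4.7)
The plan is to decompose the empirical sum over $\gamma \in \cML_X(\ZZ)$ according to the mapping class group orbit of $\gamma$ (equivalently, according to its weighted stable graph $(\Gamma,\boldsymbol{m})$), and then to combine, orbit by orbit, Mirzakhani's counting asymptotics (Theorem~\ref{thm:s=cN}) with the orbit-wise equidistribution results of Arana-Herrera~\cite{AH21} and Liu~\cite{Liu19}. Concretely, I would write
\[
\sum_{\substack{\gamma \in \cML_X(\ZZ) \\ \ell_X(\gamma)\leq R \\ \multiplicity(\gamma)\le m}} \delta_{\hat{\Ell}^\da_X(\gamma)}
= \sum_{[\gamma_0]} \left( s_X(R,\gamma_0) \cdot \frac{1}{s_X(R,\gamma_0)} \sum_{\eta \in \Mod(X)\cdot\gamma_0, \, \ell_X(\eta)\le R} \delta_{\hat{\Ell}^\da_X(\eta)} \right),
\]
where the outer sum runs over topological types $[\gamma_0]$ of multicurves with $\multiplicity(\gamma_0)\leq m$. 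Dividing by $s_X(R,m)$ converts the prefactor to $s_X(R,\gamma_0)/s_X(R,m)$, which by Theorem~\ref{thm:s=cN} converges to $c(\gamma_0)/b_{g,m}$, and Theorem~\ref{thm:c=vol} identifies this with $\frac{1}{(6g-6)!\, b_{g,m}}\cdot \frac{1}{|\Aut(\Gamma,\boldsymbol{m})|}\,\cY_{\boldsymbol{m}}(F_\Gamma)$.

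For the inner averages, the work of Arana-Herrera and Liu asserts that for each fixed topological type $(\Gamma,\boldsymbol{m})$, the normalized length vector $\hat{\Ell}^\da_X(\eta)$ of a uniformly chosen $\eta$ in the truncated orbit equidistributes on $\Delta^{E(\Gamma)}_{=1}$ with density proportional to $F_\Gamma(\{x_e/\boldsymbol{m}_e\})\prod_e \boldsymbol{m}_e^{-1}$. After pushing forward by the sort-and-pad map $\RR^{E(\Gamma)}\hookrightarrow\RR^{3g-3}$ and symmetrizing over all injections $\iota\colon E(\Gamma)\to\{1,\dots,3g-3\}$, the limiting measure is exactly the pushforward of $G_{\Gamma,3g-3}(\mu_{\Gamma,\boldsymbol{m}})$ by the sorting map. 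The normalization works out because the integral of $F_\Gamma(\{x_e/\boldsymbol{m}_e\})\prod_e \boldsymbol{m}_e^{-1}$ over the simplex $\{x_e\geq 0,\sum x_e \le 1\}$ with Lebesgue measure equals $\cY_{\boldsymbol{m}}(F_\Gamma)/(6g-6)!$ times the total degree factor, which precisely cancels so that the probability of topological type $[\gamma_0]$ under $L^{(g,m)}$ becomes $c(\gamma_0)/b_{g,m}$.

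With these two ingredients, the formal identity to establish is, for any bounded continuous $f\colon \Delta^{3g-3}_{\le 1}\to\RR$,
\[
\frac{1}{s_X(R,m)}\sum_{\substack{\gamma\\ \ell_X(\gamma)\le R\\ \multiplicity(\gamma)\le m}} f(\hat{\Ell}^\da_X(\gamma)) \xrightarrow[R\to\infty]{} \EE\bigl[f\bigl(L^{(g,m)\da}\bigr)\bigr],
\]
obtained by summing the orbit-wise limits weighted by $c(\gamma_0)/b_{g,m}$ and recognizing the resulting density as~\eqref{eq:LgAsSum} (after the identification of weighted stable graph orbits with pairs $(\Gamma,\boldsymbol{m})/\!\Aut$, which produces the automorphism factors and matches the double-sum normalization recorded in Remark~\ref{rk:automorphisms}).

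The main obstacle is the exchange of limit and summation when $m=+\infty$, since there are countably infinitely many topological types. To justify it I would produce a tail bound: fix $N$ and split the sum into topological types with $|E(\Gamma)|\le N$ versus $|E(\Gamma)|>N$. On the finite piece, the convergence follows from finitely many applications of Mirzakhani together with Arana-Herrera/Liu. The tail is controlled by Mirzakhani's integration formula / Masur-Veech-type bounds, ensuring that $\sum_{|E(\Gamma)|>N} c(\gamma_0)/b_{g,m} \to 0$ as $N\to\infty$ uniformly in $R$ (this is where the $k$-th symmetrization coefficient $(3g-3-|E(\Gamma)|)!/(3g-3)!$ and the summability of $\cY_{\boldsymbol{m}}$ over $\boldsymbol{m}\in\NN^{E(\Gamma)}$ will be used). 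Once the tail vanishes, standard portmanteau arguments on the compact space $\Delta^{3g-3}_{\le 1}$ yield the claimed weak convergence.
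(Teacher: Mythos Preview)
Your approach is essentially the same as the paper's: decompose by $\Mod(X)$-orbit, apply Mirzakhani's asymptotics (Theorem~\ref{thm:s=cN}) for the weights, apply Arana-Herrera/Liu (Theorem~\ref{thm:AHLiu}) for the orbit-wise limit, and identify the resulting constants via Theorem~\ref{thm:c=vol}. The paper's proof is in fact terser than yours and does not spell out the limit-exchange issue you raise.

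One small correction: your proposed tail truncation ``$|E(\Gamma)|\le N$ versus $|E(\Gamma)|>N$'' is vacuous, since $|E(\Gamma)|\le 3g-3$ for every $\Gamma\in\cG_g$ and $\cG_g$ is finite. The only genuinely infinite summation when $m=+\infty$ is over the weights $\boldsymbol{m}\in\NN^{E(\Gamma)}$, and the tail control you need is exactly the summability of $\cY_{\boldsymbol{m}}(F_\Gamma)$ over $\boldsymbol{m}$ (which you do mention), i.e.\ that $\cZ_\infty(F_\Gamma)<\infty$. This holds because each monomial $x_1^{n_1}\cdots x_k^{n_k}$ in $F_\Gamma$ has every $n_i\ge 1$, so $\cY_{\boldsymbol{m}}$ applied to it is $O\bigl(\prod_e \boldsymbol{m}_e^{-2}\bigr)$, which is summable. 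With that in hand, your dominated-convergence/portmanteau argument goes through.
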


The study of the length vector of multicurves of a given topological type was initiated by M.\,Mirzakhani in~\cite{Mir16}. She studied the special case of maximal multicurve corresponding to a pants decomposition. The general case that we present now was proved independently in~\cite{AH21} and~\cite{Liu19}.
\begin{thm}[\cite{AH21}, \cite{Liu19}] \label{thm:AHLiu}
Let $X$ be a hyperbolic surface and $\gamma$ a multicurve on $X$ with $k$ components. Let $(\Gamma, \boldsymbol{m})$ be a weighted stable graph dual to $\gamma$. Let $U^{(\Gamma,\boldsymbol{m})}$ be the random variable on $\Delta^{k}_{=1}$ with density
    \begin{equation}\label{eq:pdf}
\frac{(6g-7)!}{\cY_{\boldsymbol{m}}(F_\Gamma)} \frac{1}{\boldsymbol{m}_1 \cdots \boldsymbol{m}_k} \cdot F_\Gamma \left(\frac{x_1}{\boldsymbol{m}_1}, \dots, \frac{x_k}{\boldsymbol{m}_k} \right)
    \end{equation}
    where $E(\Gamma)$ and $V(\Gamma)$ are the set of edges and the set of vertices of $\Gamma$, respectively. Then we have the convergence in distribution
\[
\frac{1}{s_X(R,\gamma)}
\sum_{\substack{\eta \in \Mod(X) \cdot \gamma \\ \ell_X(\eta) \leq R}} \delta_{\hat{\Ell}_X^\da(\eta)} \xrightarrow[R \to \infty]{} U^{(\Gamma,\boldsymbol{m})\downarrow}
\]
where $U^{(\Gamma,\boldsymbol{m})\downarrow}$ is the sorted version of $U^{(\Gamma,\boldsymbol{m})}$ and $s_X(R,\gamma)$ is defined in~\eqref{eq:sXRgamma}.
\end{thm}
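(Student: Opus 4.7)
The plan is to refine Mirzakhani's lattice-point count of Theorem~\ref{thm:s=cN} so as to track not just the total length $\ell_X(\eta)$ but the full vector of component lengths of $\eta\in\Mod(X)\cdot\gamma$. Let $\eta$ have simple components $\eta_1,\ldots,\eta_k$ of multiplicities $\boldsymbol{m}_1,\ldots,\boldsymbol{m}_k$ and set $y_i = \boldsymbol{m}_i\,\ell_X(\eta_i)$, so that $\ell_X(\eta) = \sum_i y_i$ and $\hat{\Ell}^\da_X(\eta) = (y_1,\ldots,y_k)^\da/\sum_i y_i$. In Dehn--Thurston coordinates adapted to $\Gamma$, the discrete orbit $\Mod(X)\cdot\gamma$ is a lattice inside the cone of measured laminations supported on multicurves of topological type $(\Gamma,\boldsymbol{m})$, and the ball $\{\ell_X(\eta)\leq R\}$ rescales to the $R$-independent region $\{y\in\RR^k_+ : \sum_i y_i\leq 1\}$.

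Mirzakhani's integration/equidistribution argument, applied to test functions of the individual component lengths rather than only of $\ell_X(\eta)$, yields for any bounded continuous $\phi$ on $\Delta^k_{\leq 1}$,
\[
    \frac{1}{R^{6g-6}}\!\!\!\!\sum_{\substack{\eta\in\Mod(X)\cdot\gamma\\ \ell_X(\eta)\leq R}}\!\!\!\!\phi(\hat{\Ell}^\da_X(\eta))
    \;\xrightarrow[R\to\infty]{}\;
    \frac{B(X)}{b_g\,|\Aut(\Gamma,\boldsymbol{m})|}\!\int_{\sum y_i\leq 1}\!\!\phi\!\left(\frac{y^\da}{\sum_i y_i}\right)\!F_\Gamma(y/\boldsymbol{m})\prod_{e\in E(\Gamma)}\!\frac{dy_e}{\boldsymbol{m}_e},
\]
where the density $F_\Gamma(y/\boldsymbol{m})$ on the cone arises from Mirzakhani's integration formula (the Weil--Petersson volume polynomials $V_{g_v,n_v}$ of the vertex strata assembled via the edge factors $\prod x_e$) and the Jacobian $\prod 1/\boldsymbol{m}_e$ from the rescaling $\ell_X(\eta_i)\mapsto y_i$. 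Setting $\phi\equiv 1$ must recover Theorem~\ref{thm:s=cN} together with the formula $c(\gamma) = \cY_{\boldsymbol{m}}(F_\Gamma)/((6g-6)!\,|\Aut(\Gamma,\boldsymbol{m})|)$ of Theorem~\ref{thm:c=vol}, which follows from the radial decomposition $\int_{\RR^k_+} dy = \int_0^\infty r^{d+k-1}\,dr\int_{\Delta^k_{=1}} d\tilde{y}$ (valid because $F_\Gamma$ is homogeneous of degree $d = 6g-6-k$) together with the elementary Laplace identity $\int_0^\infty y^n e^{-my}\,dy = n!/m^{n+1}$.

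Dividing the refined count by $s_X(R,\gamma)$ cancels the prefactor $B(X)/(b_g|\Aut(\Gamma,\boldsymbol{m})|)$. Another application of the same radial decomposition $y = r\tilde{y}$, $r\in[0,1]$, $\tilde{y}\in\Delta^k_{=1}$---with $y^\da/\sum_i y_i = \tilde{y}^\da$ because the scalar $r$ cancels---factors both numerator and denominator through the unit simplex; the radial integrals $\int_0^1 r^{6g-7}\,dr$ are identical in both and cancel, leaving
\[
    \frac{1}{s_X(R,\gamma)}\!\!\sum_{\substack{\eta\in\Mod(X)\cdot\gamma\\ \ell_X(\eta)\leq R}} \!\!\phi(\hat{\Ell}^\da_X(\eta))
    \;\longrightarrow\;
    \frac{(6g-7)!}{\cY_{\boldsymbol{m}}(F_\Gamma)}\!\int_{\Delta^k_{=1}}\!\phi(\tilde{y}^\da)\,F_\Gamma(\tilde{y}/\boldsymbol{m})\prod_e\frac{d\tilde{y}_e}{\boldsymbol{m}_e}
    \;=\;\EE\phi(U^{(\Gamma,\boldsymbol{m})\da}),
\]
the normalisation constant $(6g-7)!/\cY_{\boldsymbol{m}}(F_\Gamma)$ being read off from the identity $\int_{\Delta^k_{=1}}F_\Gamma(\tilde{y}/\boldsymbol{m})\prod(d\tilde{y}_e/\boldsymbol{m}_e) = \cY_{\boldsymbol{m}}(F_\Gamma)/(6g-7)!$, so that the limiting density is exactly~\eqref{eq:pdf}. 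The main obstacle is the first step---upgrading Mirzakhani's lattice-point count to a count weighted by an arbitrary continuous function of the normalised component vector, not just of $\ell_X$; on a fixed $X$ this requires combining Thurston's measure on $\cML_X$, Mirzakhani's integration formula over $\cM_g$, and equidistribution of mapping class group orbits, and is the technical heart of~\cite{AH21} and~\cite{Liu19}. Given the refined equidistribution, the identification of the limit law with~\eqref{eq:pdf} is the routine change-of-variables/Laplace-transform computation above.
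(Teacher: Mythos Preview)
The paper does not prove Theorem~\ref{thm:AHLiu}: it is quoted verbatim from the external references~\cite{AH21} and~\cite{Liu19}, and is used as a black box in the proof of Theorem~\ref{thm:LgMorePrecise}. So there is no ``paper's own proof'' to compare your proposal against.

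That said, your sketch is a faithful outline of how those references proceed, and you correctly separate what is routine from what is hard. The reduction you give from the refined count to the density~\eqref{eq:pdf} is exactly the homogeneity/radial-decomposition computation recorded in the paper as Remark~\ref{rk:proba} and Lemma~\ref{lem:int_sp}; in particular your identification of the normalising constant $(6g-7)!/\cY_{\boldsymbol{m}}(F_\Gamma)$ is correct. (The Laplace-transform remark is a valid alternative bookkeeping for $\cY_{\boldsymbol{m}}$, since $\cY_{\boldsymbol{m}}(x_1^{n_1}\cdots x_k^{n_k}) = \prod_i \int_0^\infty x_i^{n_i} e^{-\boldsymbol{m}_i x_i}\,dx_i$, though the paper itself uses only the simplex integral of Lemma~\ref{lem:int_sp}.) You are also right that the substantive content---upgrading the scalar lattice-point asymptotic of Theorem~\ref{thm:s=cN} to an equidistribution statement for the full normalised length vector, valid against arbitrary continuous test functions on $\Delta^k_{=1}$---is precisely the theorem of~\cite{AH21} and~\cite{Liu19}, and is not reproved here.
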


We endow $\Delta^k_{\le r}$ with the restriction of the Lebesgue measure on $\RR^k$ that we denote by $\lambda_{\le r}^k$. We define the slice $\Delta_{=r}^k$ inside $\Delta^k_{\le r}$ as
\[
\Delta_{=r}^k
\coloneqq
\{(x_1,x_2,\ldots,x_k) \in [0,\infty)^k : x_1 + x_2 + \cdots + x_k = r\}
\]
and its infinite counterpart
\[
\Delta_{=r}^\infty
\coloneqq
\{(x_1,x_2,\ldots) \in [0,\infty)^\NN : x_1 + x_2 + \cdots = r\}.
\]
Let us mention that $\Delta_{\le r}^\infty$ is closed for the product topology in $[0,r]^\NN$ and hence compact. However $\Delta_{=r}^\infty$ is dense in $\Delta_{\le r}^\infty$. For this reason, it is more convenient to work with measures on $\Delta_{\le r}^\infty$ even though they are ultimately supported on $\Delta_{=r}^\infty$.

On $\Delta^k_{= r}$ which is contained in a hyperplane in $\RR^k$ we consider the Lebesgue measure induced by any choice of $k-1$ coordinates among $x_1$, \ldots, $x_k$. The latter measure is well defined since the change of variables between different choices has determinant $\pm 1$. We first start with an elementary integration lemma.
\begin{lem}\label{lem:int_sp}
Let $d_1,\dots,d_k\in\RR_{\geq 0}$. Then
\[
    \int_{\Delta_{\leq r}^k} x_1^{d_1}x_2^{d_2}\cdots x_k^{d_k} \, d\lambda^k_{\leq r}
    = \frac{d_1!\cdots d_k!}{(d_1+\cdots+d_k+k)!} \cdot r^{d_1+\cdots+d_k+k}
\]
and
\[
    \int_{\Delta_{=r}^k} x_1^{d_1}x_2^{d_2}\cdots x_k^{d_k} \, d\lambda^k_{=r}
    = \frac{d_1!\cdots d_k!}{(d_1+\cdots+d_k+k-1)!} \cdot r^{d_1+\cdots+d_k+k-1},
\]
\end{lem}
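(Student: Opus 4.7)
Both identities are instances of the classical Dirichlet integral, so my plan is to prove the first formula directly by induction on $k$, and then deduce the second formula from the first by differentiating in the parameter $r$. The reduction to the one‑variable Beta integral is the mechanism behind both steps.

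First, I would reduce to $r=1$ by the substitution $x_i = r\, y_i$, whose Jacobian is $r^k$ and which scales the integrand by $r^{d_1+\cdots+d_k}$. The base case $k=1$ is $\int_0^1 y^{d_1}\,dy = 1/(d_1+1)$, matching the claim. For the inductive step, set $D_{k-1}=d_1+\cdots+d_{k-1}$ and apply Fubini in the variable $x_k$:
\[
\int_{\Delta^k_{\le 1}} \prod_{i=1}^k x_i^{d_i}\,dx
= \int_0^1 x_k^{d_k} \left(\int_{\Delta^{k-1}_{\le 1-x_k}} \prod_{i=1}^{k-1} x_i^{d_i}\,dx_1\cdots dx_{k-1}\right)\,dx_k.
\]
The inductive hypothesis evaluates the inner integral to $\frac{d_1!\cdots d_{k-1}!}{(D_{k-1}+k-1)!}(1-x_k)^{D_{k-1}+k-1}$, and then the outer integral is a Beta integral $B(d_k+1,\,D_{k-1}+k) = \frac{d_k!\,(D_{k-1}+k-1)!}{(d_1+\cdots+d_k+k)!}$. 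Multiplying gives exactly the claimed constant.

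For the second formula, I would exploit the fact that $\Delta^k_{\le r}$ is foliated by the level sets $\Delta^k_{=t}$ for $t \in [0,r]$. Using the parameterization $(x_1,\ldots,x_{k-1},t) \mapsto (x_1,\ldots,x_{k-1},\,t-x_1-\cdots-x_{k-1})$ of $\Delta^k_{\le r}$, whose Jacobian is $1$ and which is compatible with the definition of $\lambda^k_{=t}$ given in the statement, Fubini gives
\[
\int_{\Delta^k_{\le r}} f(x)\,d\lambda^k_{\le r}
= \int_0^r \left(\int_{\Delta^k_{=t}} f(x)\,d\lambda^k_{=t}\right)dt.
\]
Differentiating in $r$ with $f(x)=x_1^{d_1}\cdots x_k^{d_k}$ and using the first formula gives
\[
\int_{\Delta^k_{=r}} x_1^{d_1}\cdots x_k^{d_k}\,d\lambda^k_{=r}
= \frac{d}{dr}\left(\frac{d_1!\cdots d_k!}{(d_1+\cdots+d_k+k)!}\,r^{d_1+\cdots+d_k+k}\right)
= \frac{d_1!\cdots d_k!}{(d_1+\cdots+d_k+k-1)!}\,r^{d_1+\cdots+d_k+k-1},
\]
as desired.

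There is no real obstacle here beyond bookkeeping; the only subtlety to check is that the measure $\lambda^k_{=r}$ specified in the statement (pushforward of Lebesgue along the parameterization by any $k-1$ of the coordinates) is exactly the measure that appears in the slicing identity above. Since the map $(x_1,\ldots,x_{k-1},t)\mapsto(x_1,\ldots,x_k)$ has unit Jacobian, this is automatic, and the proof goes through cleanly without needing to introduce Hausdorff measures or $\sqrt{k}$ factors.
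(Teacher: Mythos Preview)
Your proof is correct and follows essentially the same approach as the paper: both reduce via Fubini to the one-variable Beta integral and proceed by induction on $k$. The only minor difference is in the treatment of the second identity: the paper runs a parallel induction directly on $\Delta^k_{=r}$ using the same Beta reduction, whereas you obtain it by slicing $\Delta^k_{\le r}$ into the level sets $\Delta^k_{=t}$ and differentiating the first formula in $r$. Both routes are equally short and rely on the same measure convention for $\lambda^k_{=r}$, so this is a cosmetic variation rather than a genuinely different argument.
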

Here the factorial of a real number has to be considered by mean of the analytic continuation
given by the gamma function : $x! = \varGamma(x+1)$.

\begin{rem} \label{rk:proba}
Using Lemma~\ref{lem:int_sp}, let us check that~\eqref{eq:LgAsSum} and~\eqref{eq:pdf} are indeed densities of probability measures. From the second equation in the statement of Lemma~\ref{lem:int_sp} it follows that the total mass of~\eqref{eq:muGammam} is $\cY_m(F_\Gamma)$, namely
\[
\cY_m(F_\Gamma) = (6g-7)! \int_{\Delta^k_{=1}} \frac{1}{\boldsymbol{m}_1\cdots \boldsymbol{m}_k} \, F_\Gamma \left( \frac{x_1}{\boldsymbol{m}_1}, \ldots, \frac{x_k}{\boldsymbol{m}_k}\right) d\lambda^k_{=1}(x_1, \ldots, x_k).
\]
Indeed, each monomial that appears in $F_\Gamma$ has $k$ variables and total degree $6g-6-k$. Hence the denominator coming from the formula of Lemma~\ref{lem:int_sp} compensates the $(6g-7)!$ term from~\eqref{eq:muGammam}. The numerator in the formula of Lemma~\ref{lem:int_sp} matches the definition of $\cY_m$.
\end{rem}

\begin{proof}[Proof of Lemma~\ref{lem:int_sp}]
For $x > 0$ real and $\alpha,\beta \geq 1$ integral, we have the
following scaling of the beta function
\begin{equation}\label{eq:beta}
    \int_0^x t^{\alpha-1}(x-t)^{\beta-1}\, dt
    =
    \frac{\varGamma(\alpha) \, \varGamma(\beta)}{\varGamma(\alpha+\beta)}\, x^{\alpha+\beta-1}.
\end{equation}
This implies that
\[
\int_{\Delta^k_{\leq r}} \frac{x_1^{d_1}}{d_1!} \frac{x_2^{d_2}}{d_2!} \cdots \frac{x_k^{d_k}}{d_k!} \, d\lambda^k_{\leq r}
=
\int_{\Delta^{k-1}_{\leq r}}
\frac{x_1^{d_1}}{d_1!} \frac{x_2^{d_2}}{d_2!} \cdots \frac{x_{k-1}^{d_{k-1}+d_k+1}}{(d_{k-1}+d_k+1)!} \, d\lambda^{k-1}_{\leq r}
\]
and
\[
\int_{\Delta^k_{=r}} \frac{x_1^{d_1}}{d_1!} \frac{x_2^{d_2}}{d_2!} \cdots \frac{x_k^{d_k}}{d_k!} \, d\lambda^k_{=r}
=
\int_{\Delta^{k-1}_{=r}}
\frac{x_1^{d_1}}{d_1!} \frac{x_2^{d_2}}{d_2!} \cdots \frac{x_{k-1}^{d_{k-1}+d_k+1}}{(d_{k-1}+d_k+1)!} \, d\lambda^{k-1}_{=r}.
\]
The two equations in the statement then follow by induction.
\end{proof}

\begin{proof}[Proof of Theorem~\ref{thm:LgMorePrecise}]
We just have to gather the different contributions of each multicurve coming from Theorem~\ref{thm:AHLiu} that F.~Arana-Herrera and M.~Liu gave. From Theorem~\ref{thm:s=cN} of M.~Mirzakhani, for any multicurve $\gamma\in\cML_X(\ZZ)$, its asymptotic density in $\cML_X(\ZZ)$ is $\frac{c(\gamma)}{b_g}$. Now Theorem~\ref{thm:c=vol} provides the values of $c(\gamma)$ and $b_{g,m}$ in terms of the stable graph polynomials $F_\Gamma$.
\end{proof}

\begin{rem}
    In~\cite{Liu19}, the length vector $\Ell_X(\gamma)$ for a multicurve $\gamma = \boldsymbol{m}_1\gamma_1 + \cdots + \boldsymbol{m}_k\gamma_k$ is defined as $(\ell_X(\gamma_1),\dots,\ell_X(\gamma_k))$ instead of $(\ell_X(\boldsymbol{m}_1 \gamma_1),\dots,\ell_X(\boldsymbol{m}_k\gamma_k))$ as in this paper, and the limit distribution is thus supported on the (non-standard) simplex
    \[
        \{(x_1,\dots,x_k)\in\RR^k : x_1,\dots,x_k\geq 0,\ \boldsymbol{m}_1x_1 + \cdots + \boldsymbol{m}_kx_k = 1\}.
    \]
    As a consequence, our distribution is the push-forward of the distribution in~\cite{Liu19} under the map $(x_1,\dots,x_k) \mapsto (\boldsymbol{m}_1x_1,\dots,\boldsymbol{m}_kx_k)$.
\end{rem}

\section{Reduction in the asymptotic regime}
\label{sec:reduction}
The random variable $L^{(g,m)}$ appearing in Theorem~\ref{thm:LgMorePrecise} is delicate
to study because it involves a huge number of terms. Using Theorem~\ref{thm:A+DGZZtruncatedSum}
from~\cite{Agg21} and~\cite{DGZZ20} we show that we can restrict to a sum involving only
$O(\log(g))$ terms associated to non-separating multicurves.

We denote by $\Gamma_{g,k}$ the stable graph of genus $g$
with a vertex of genus $g-k$ and $k$ loops. To simplify the notation we fix a bijection
between the edges of $\Gamma_{g,k}$ and $\{1,2,\ldots,k\}$ so that $F_{\Gamma_{g,k}}$
is a polynomial in $\QQ[x_1, \ldots, x_k]$. Note that because the edges in $\Gamma_{g,k}$
are not distinguishable, the polynomial $F_{\Gamma_{g,k}}$ is symmetric.

Using the same notation as in Theorem~\ref{thm:LgMorePrecise} we have the following
result.
\begin{thm} \label{thm:reduction}
For $m \in \NN \cup \{+\infty\}$ and $\kappa > 1$, let $\tilde{L}^{(g,m,\kappa)}$ be the random variable on $\Delta^{3g-3}_{\leq 1}$ with density
\begin{equation} \label{eq:LgkappaAsSum}
    \frac{1}{(6g-6) \cdot \tilde{b}_{g,m,\kappa}}
    \sum_{k=1}^{\kappa \cdot \frac{\log(6g-6)}{2}} \frac{1}{|\Aut \Gamma_{g,k}|} \sum_{\substack{\boldsymbol{m} \in \NN^k \\ \forall i \in \{1, \ldots, k\}, \boldsymbol{m}_i \le m}} G_{\Gamma_{g,k},3g-3}(\mu_{\Gamma_{g,k},\boldsymbol{m}})
\end{equation}
where $\tilde{b}_{g,m,\kappa}$ is defined in~\eqref{eq:bgmkappa}.
Then for any function $h \in L^\infty(\Delta^\infty_{\le 1})$ we have
\[
\EE(h(L^{(g,m)}))
\sim
\EE(h(\tilde{L}^{(g,m,\kappa)}))
\]
as $g\to\infty$.
\end{thm}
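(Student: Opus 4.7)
The plan is a direct algebraic comparison of the two expectations, using only the asymptotic $b_{g,m}\sim\tilde{b}_{g,m,\kappa}$ from Theorem~\ref{thm:A+DGZZbgAsymptotics} as non-trivial input.

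First I would record the elementary mass bound: for any weighted stable graph $(\Gamma,\boldsymbol{m})$ of genus $g$ and any $h\in L^\infty(\Delta^\infty_{\le 1})$,
\[
\left|\int h\,dG_{\Gamma,3g-3}(\mu_{\Gamma,\boldsymbol{m}})\right|
\le \|h\|_\infty \cdot \frac{\cY_{\boldsymbol{m}}(F_\Gamma)}{(6g-7)!}.
\]
Two observations go in here: the symmetrization $G_{\Gamma,k}$ preserves total mass (the prefactor $(k-|E(\Gamma)|)!/k!$ precisely cancels the number of injections $\iota$), and the total mass of $\mu_{\Gamma,\boldsymbol{m}}$ on $\Delta^{E(\Gamma)}_{=1}$ equals $\cY_{\boldsymbol{m}}(F_\Gamma)/(6g-7)!$ as recorded in Remark~\ref{rk:proba}.

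Next I would split the sum defining $(6g-6)\,b_{g,m}\,\EE(h(L^{(g,m)}))$ into $S_a+S_b$, where $S_a$ collects the contributions of $\Gamma=\Gamma_{g,k}$ with $1\le k\le\kappa\log(6g-6)/2$ and $S_b$ collects the rest. The key combinatorial remark is that the stable graphs summed in the definition of $\tilde{b}_{g,m,\kappa}$ (one vertex, at most $\kappa\log(6g-6)/2$ edges) are precisely the $\Gamma_{g,k}$ in that range, so that
\[
(6g-6)\,\tilde{b}_{g,m,\kappa}\,\EE(h(\tilde{L}^{(g,m,\kappa)}))=S_a
\]
with the same $S_a$. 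Summing the mass bound above yields $|S_a|\le\|h\|_\infty(6g-6)\tilde{b}_{g,m,\kappa}$ and $|S_b|\le\|h\|_\infty(6g-6)(b_{g,m}-\tilde{b}_{g,m,\kappa})$ via Theorem~\ref{thm:c=vol}.

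Subtracting and clearing denominators gives
\[
\EE(h(L^{(g,m)}))-\EE(h(\tilde{L}^{(g,m,\kappa)}))
=\frac{S_a(\tilde{b}_{g,m,\kappa}-b_{g,m})+S_b\,\tilde{b}_{g,m,\kappa}}{(6g-6)\,b_{g,m}\,\tilde{b}_{g,m,\kappa}},
\]
and the two bounds combine to majorize this in absolute value by $2\|h\|_\infty\,(b_{g,m}-\tilde{b}_{g,m,\kappa})/b_{g,m}$, which tends to $0$ by Theorem~\ref{thm:A+DGZZbgAsymptotics}. This is the sense in which the two sequences of expectations are asymptotically equivalent, sufficient to ensure they share the same weak limit. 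No genuine obstacle arises in the argument itself: the substantive content is entirely imported from Aggarwal's large-genus asymptotics and the Masur--Veech reduction carried out in the companion paper, where the fact that non-separating multicurves with $O(\log g)$ components carry essentially all the mass of $b_{g,m}$ is actually proved. The only point to verify carefully is the matching of $S_a$ in the two densities, which relies on the uniqueness (up to isomorphism) of the single-vertex stable graph of genus $g$ with $k$ loops.
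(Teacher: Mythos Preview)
Your proposal is correct and is essentially a fully spelled-out version of the paper's two-line proof: both rest entirely on Theorem~\ref{thm:A+DGZZtruncatedSum} (equivalently $b_{g,m}\sim\tilde{b}_{g,m,\kappa}$) to show that the mass carried by the ``bad'' stable graphs is negligible, and then use boundedness of $h$ to transfer this to expectations. Your explicit splitting $S_a+S_b$, the mass identity from Remark~\ref{rk:proba}, and the final bound $2\|h\|_\infty(b_{g,m}-\tilde{b}_{g,m,\kappa})/b_{g,m}\to 0$ are exactly the details the paper leaves implicit, and your remark that the conclusion should be read as ``difference tends to $0$'' (hence same weak limit) rather than a literal ratio statement is a sensible clarification of the $\sim$ in the statement.
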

Note that terms appearing in the sum~\eqref{eq:LgkappaAsSum} in Theorem~\ref{thm:reduction} form a subset of the terms in the sum~\eqref{eq:LgAsSum} in Theorem~\ref{thm:LgMorePrecise}.

\begin{proof}
By Theorem~\ref{thm:A+DGZZtruncatedSum}, a random multicurve of high genus is almost surely non-separating with less than $\kappa \frac{\log(6g-6)}{2}$ edges. As $h$ is bounded, we obtain the result.
\end{proof}

\section{Size-biased sampling and Poisson--Dirichlet distribution}
\label{sec:PDandGEM}

\subsection{Size-biased reodering}
The components of a multicurve are not ordered in any natural way. In Theorem~\ref{thm:Lg} we solve this issue by defining a symmetric random
variable $L^{(g,m)}$ on $\Delta^{3g-3}_{\le 1}$ and making the convergence happen
towards  $L^{(g,m)\downarrow}$ whose entries are sorted in decreasing order.
In this section we introduce another natural way of ordering the entries: the size-biased ordering. Contrarily to the symmetrization or
the decreasing order, it is a random ordering. The size-biased ordering
turns out to be convenient in the proof of Theorem~\ref{thm:Lg}.

We work with vectors $x = (x_1, \ldots, x_k)$ in $\Delta^k_{\leq 1}$. A
\emph{reordering}
of $x$ is a random variable of the form $(x_{\sigma(1)}, \ldots, x_{\sigma(k)})$ where
$\sigma$ is a random permutation in $S_k$. We aim to define the size-biased
reordering $x^* = (x_{\sigma(1)}, x_{\sigma(2)}, \ldots, x_{\sigma(k)})$ of $x$.

The idea under the size-biased reordering is to pick components according to
their values. One can define the random permutation $\sigma$ inductively as
follows. If $x$ is the zero vector, then $x^* = x_\sigma$ where $\sigma$ is
taken uniformly at random. Otherwise, we set $\sigma(1)$ according to
\[
\PP_x(\sigma(1) = i) \coloneqq \frac{x_i}{x_1 + \cdots + x_k}
\]
and define a new vector $y = (x_1, \ldots, \widehat{x}_{\sigma(1)}, \ldots, x_k)$
on $\Delta^{k-1}_{\leq 1}$ which is the vector $x$ with the component $x_{\sigma(1)}$
removed. In order to keep track of the components we denote $\phi: \{1, 2, \ldots, k-1\} \to \{1, 2, \ldots, k\}$
the unique increasing injection such that its image avoids $\sigma(1)$. In other words
\[
\phi(i) \coloneqq
\left\{ \begin{array}{ll}
i & \text{if $1 \le i < \sigma(1)$} \\
i+1 & \text{if $\sigma(1) \le i \le k-1$.}
\end{array} \right.
\]
Assuming that by induction $y$ has a size-biased reordering $\sigma_y$ we
define for $i \in \{1, 2, \ldots, k-1\}$ the other values by $\sigma(\phi(i)) \coloneqq \phi(\sigma_y(i))$.
This defines inductively the size-biased reordering.

A more direct definition can be given as follows. Given $r$ such that at least $r$
components of $x$ are positive, for $1\leq i_1,\dots,i_r \leq k$ distinct integers, we have
\begin{equation}
\label{eq:sizeBiasedPermutation}
\begin{split}
\PP_x(\sigma(1) = i_1, \ldots, \sigma(r) &= i_r)
= \\
& \frac{x_{i_1} x_{i_2} \ldots x_{i_{r}}}{s(s-x_{i_1})(s-x_{i_1}-x_{i_2}) \cdots (s - x_{i_1} - \cdots - x_{i_{r-1}})}
\end{split}
\end{equation}
where $s = x_1 + x_2 + \cdots + x_k$. Note that for $r=k$ we have $x_r = 1 - x_1 - \cdots - x_{r-1}$ and one can
perform a simplification of the last terms in the numerator and denominator.

Now let $X \colon \varOmega \to \Delta^k_{\le 1}$ be a random variable. In
order to define its size-biased reordering $X^* \colon \varOmega \to \Delta^k_{\le 1}$,
we consider for each $x \in \Delta^k_{\le 1}$ independent random variables $\sigma_x$ distributed according to $\PP_x$
as defined above which are furthermore independent from $X$. We then define for each
$\omega \in \varOmega$
\[
X^*(\omega) \coloneqq \sigma_{X(\omega)} \cdot X(\omega)
\]
where $\sigma \cdot x = (x_{\sigma(1)}, \ldots, x_{\sigma(k)})$.

\begin{lem} \label{lem:MarginalSizeBiased}
Let $X$ a random variable on $\Delta^k_{= 1}$ with
density $f_X \colon \Delta^k_{= 1} \to \RR$.
Let $1 \leq r \leq k$. Then the $r$-th marginal of the size-biased reordering of $X$, that is to say
the density of the vector $(X^*_1, \ldots, X^*_r)$ is
\begin{align*}
    & f_{(X_1^*,\dots,X_r^*)}(x_1,\dots,x_r)
    =
    \frac{1}{(k-r)!} \frac{x_1 \cdots x_r}{(1-x_1)\cdots (1-x_1-\cdots -x_{r-1})} \cdot \\
    & \qquad
    \cdot \int_{\Delta_{= 1-x_1-\cdots -x_r}^{k-r}} \sum_{\sigma\in S_{k}} f_X(x_{\sigma(1)},\dots,x_{\sigma(k)}) \, d\lambda^{k-r}_{=1-x_1-\cdots-x_r}(x_{r+1}, \dots, x_{k}),
\end{align*}
\end{lem}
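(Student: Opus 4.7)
The plan is to verify the claimed identity by testing against an arbitrary bounded measurable $h\colon \Delta_{\leq 1}^r \to \RR$, expanding $\EE[h(X_1^*,\ldots,X_r^*)]$ via the conditional distribution of the size-biased permutation, and matching the result with the integral of $h$ against the proposed density.

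First I would condition on $X$ and apply~\eqref{eq:sizeBiasedPermutation} with $s=1$ (valid because $X \in \Delta_{=1}^k$ almost surely), yielding
\[
\EE[h(X_1^*, \ldots, X_r^*)] = \int_{\Delta_{=1}^k} f_X(x) \sum_{\substack{1 \le i_1, \ldots, i_r \le k \\ \text{distinct}}} \frac{x_{i_1}\cdots x_{i_r}\, h(x_{i_1}, \ldots, x_{i_r})}{(1-x_{i_1})\cdots(1-x_{i_1}-\cdots-x_{i_{r-1}})}\, d\lambda_{=1}^k(x).
\]
Next, for each ordered tuple $(i_1,\ldots,i_r)$ I would perform the coordinate permutation bringing the selected coordinates to the first $r$ slots. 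Precisely, let $\tau = \tau_{(i_1,\ldots,i_r)} \in S_k$ be the unique permutation with $\tau(\ell) = i_\ell$ for $\ell \le r$ and $\tau(r+1) < \cdots < \tau(k)$, and set $y_j = x_{\tau(j)}$. This substitution has unit Jacobian; after Fubini splits $\Delta_{=1}^k$ into $(y_1,\ldots,y_r) \in \Delta_{\leq 1}^r$ together with the fiber $\Delta_{=s}^{k-r}$ for $s = 1 - y_1 - \cdots - y_r$, the expression becomes
\[
\int_{\Delta_{\leq 1}^r} \frac{y_1 \cdots y_r\, h(y_1,\ldots,y_r)}{(1-y_1)\cdots(1-y_1-\cdots-y_{r-1})} \left[\sum_{(i_1,\ldots,i_r)} \int_{\Delta_{=s}^{k-r}} f_X(y_{\tau^{-1}(1)},\ldots,y_{\tau^{-1}(k)})\, d\lambda_{=s}^{k-r}\right] dy_1 \cdots dy_r.
\]

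The final step is a combinatorial repackaging of the bracket. Because Lebesgue measure on $\Delta_{=s}^{k-r}$ is invariant under permutation of the integration variables $y_{r+1},\ldots,y_k$, the inner integral is unchanged if $\tau$ is replaced by $\tau \circ \rho'$ for any $\rho' \in S_k$ fixing $\{1,\ldots,r\}$ pointwise. Since the map $((i_1,\ldots,i_r), \rho') \mapsto \tau_{(i_1,\ldots,i_r)} \circ \rho'$ is a bijection onto $S_k$, averaging over the $(k-r)!$ choices of $\rho'$ turns the bracket into
\[
\frac{1}{(k-r)!} \sum_{\pi \in S_k} \int_{\Delta_{=s}^{k-r}} f_X(y_{\pi^{-1}(1)},\ldots,y_{\pi^{-1}(k)})\, d\lambda_{=s}^{k-r},
\]
which after the reindexing $\sigma = \pi^{-1}$ and relabeling $y \leftrightarrow x$ is exactly the formula claimed for $f_{(X_1^*,\dots,X_r^*)}$; the lemma follows by identifying densities from the arbitrariness of $h$.

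The only genuine obstacle is bookkeeping: keeping straight how each permutation acts on the arguments of $f_X$, and verifying that the coset decomposition $S_k = \bigsqcup_{(i_1,\ldots,i_r)} \{\tau_{(i_1,\ldots,i_r)} \circ \rho' : \rho' \in S_{k-r}\}$ is a genuine disjoint union covering $S_k$ exactly once. Everything else reduces to a routine application of Fubini and a unit-Jacobian change of variables.
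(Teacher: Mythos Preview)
Your argument is correct and is organized differently from the paper's. The paper first writes down the full density $f_{X^*}$ on $\Delta^k_{=1}$ (the case $r=k$) and then marginalizes by integrating out $x_{r+1},\ldots,x_k$; to collapse the resulting expression it splits the fiber into the ordered subsimplices $\Delta^{k-r}_{=1-s;\tau}$ and invokes the identity
\[
\sum_{\tau \in S(\{r+1,\ldots,k\})}
\frac{x_{\tau(r+1)}\cdots x_{\tau(k)}}
     {(1-s-x_{\tau(r+1)})\cdots(1-s-x_{\tau(r+1)}-\cdots-x_{\tau(k-1)})}=1,
\]
i.e.\ that the size-biased reordering of the tail is a probability. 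Your route avoids ever introducing the tail factors: you work directly with the $r$-step formula~\eqref{eq:sizeBiasedPermutation}, move the chosen coordinates to the front by a unit-Jacobian substitution, and then recognize the sum over ordered $r$-tuples as a sum over coset representatives of $S_{k-r}$ in $S_k$, using only the permutation invariance of the fiber integral. This is slightly more economical (no ordered-subsimplex decomposition, no appeal to the summation-to-one identity) and makes the appearance of the factor $1/(k-r)!$ transparent as the index of the subgroup. The paper's version, on the other hand, yields the full joint density of $X^*$ as a byproduct before specializing to a marginal. One small point worth stating explicitly: because $X$ has a density, almost surely all coordinates are positive, so the non-degenerate formula~\eqref{eq:sizeBiasedPermutation} applies on a set of full measure; the paper notes this, and it is implicit in your first display.
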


\begin{proof}
    Let us define $g(x_1, \ldots, x_k) \coloneqq \sum_{\sigma \in S_k} f(x_{\sigma(1)}, \ldots, x_{\sigma(k)})$.

We first consider the case $r=k$.
Since $X$ admits a density, almost surely all components are positive and distinct. Hence one can use~\eqref{eq:sizeBiasedPermutation}
to write its density as
\[
f_{X^*}(x_1, \ldots, x_k) = \frac{x_1 \cdots x_{k}}{(1-x_1) \cdots (1-x_1-\cdots-x_{k-1})} \, g(x_1,\ldots,x_k).
\]
In the above formula we used the fact that the sum of $X$ is $s=1$ almost surely.

Now, for $1 \leq r \leq k-1$, the $r$-th marginal is obtained by integrating the free variables
\begin{equation}
\label{eq:marginalIntegral}
f_{(X^*_1, \ldots, X^*_r)}(x_1, \ldots, x_r) = \int_{\Delta^{k-r}_{=1-s}} f_{X^*}(x_1,\ldots,x_k) \, d\lambda^{k-r}_{=1-s}(x_{r+1},\ldots,x_k)
\end{equation}
where $s = x_1 + \cdots + x_r$. For a permutation $\tau \in S_{k-r}$ we define the subsimplex
\[
\Delta_{=1-s;\tau}^{k-r}
\coloneqq
\{(x_1, \ldots, x_{k-r}) \in \Delta_{=1-s}^{k-r} : x_{\tau(1)} > x_{\tau(2)} > \cdots > x_{\tau(k-r)}\}.
\]
We can decompose the integral~\eqref{eq:marginalIntegral} as a sum over these subsimplices
\begin{align*}
    f_{(X^*_1, \ldots, X^*_r)} & (x_1, \ldots, x_r)
=
\frac{x_1 \cdots x_r}{(1-x_1) \cdots (1-x_1 - \cdots - x_{r-1})} \\
& \cdot \sum_{\tau \in S_{r-k}}
\int_{\Delta_{=1-s;\tau}^{k-r}}
\frac{x_{r+1} \cdots x_k\, g(x_1, \ldots, x_k) \, d\lambda^{k-r}_{=1-s}(x_{r+1}, \ldots, x_k)}{(1-s-x_{k+1}) \cdots (1-s-x_{k+1}-\cdots-x_{r-1})}.
\end{align*}
Using the fact that $g$ is symmetric, we can rewrite it by mean of a change of variables on
the standard simplex $\Delta_{=1-s; \id}^{k-r}$
\begin{align*}
& f_{(X^*_1, \ldots, X^*_r)} (x_1, \ldots, x_r)
=
\frac{x_1 \cdots x_r}{(1-x_1) \cdots (1-x_1 - \cdots - x_{r-1})} \cdot \\
&
\int_{\Delta_{=1-s;\id}^{k-r}}
\sum_{\tau \in S(\{r+1,\ldots,k\})}
\frac{x_{\tau(r+1)} \cdots x_{\tau(k)} \, g(x_1, \ldots, x_k) \, d\lambda^{k-r}_{=1-s}(x_{r+1}, \ldots, x_k)}{(1-s-x_{\tau(k+1)}) \cdots (1-s-x_{\tau(k+1)}-\cdots-x_{\tau(r-1)})}.
\end{align*}
Using the facts that
\[
\sum_{\tau \in S(\{r+1,\ldots,k\})}
\frac{x_{\tau(r+1)} \cdots x_{\tau(k)}}{(1-s-x_{\tau(k+1)}) \cdots (1-s-x_{\tau(k+1)}-\cdots-x_{\tau(r-1)})} = 1
\]
and
\begin{align*}
\int_{\Delta_{=1-s;\id}^{k-r}}
\sum_{\sigma \in S_k} & f(x_{\sigma(1)}, \ldots, x_{\sigma(k)}) \, d\lambda^{k-r}_{=1-s}(x_{r+1}, \ldots, x_k)
\\
& =
\frac{1}{(k-r)!}
\int_{\Delta^{k-r}_{=1-s}}
\sum_{\sigma \in S_k} f(x_{\sigma(1)}, \ldots, x_{\sigma(k)}) \, d\lambda^{k-r}_{=1-s}(x_{r+1}, \ldots, x_k)
\end{align*}
we obtain the result.
\end{proof}

We finish this section by mentioning that the size-biased reordering extends to infinite vectors, that is elements on $\Delta^{\infty}_{\le 1}$.

\subsection{Poisson--Dirichlet and GEM distributions}
\label{ssec:PoissonDirichletAndGEM}

Recall that the $\GEM(\theta)$ distribution was defined in the introduction via the stick-breaking process. We also defined the $\PD(\theta)$ as the sorted reordering of $\GEM(\theta)$. The Poisson--Dirichlet distribution admits an intrinsic definition in terms of the Poisson process first introduced by Kingman~\cite{Kin75}. We refer to~\cite[Section~4.11]{ABT03} for this definition. Instead we concentrate on the simpler Griffiths-Engen-McCloskey distribution.

In the introduction we passed from $\GEM(\theta)$ to $\PD(\theta)$. The following result formalizes the equivalence between the two distributions.
\begin{thm}[\cite{DJ89}] \label{thm:PDversusGEM}
Let $X = (X_1, X_2, \ldots)$ be a random variable on $\Delta_{= 1}^\infty$. Let $\theta > 0$. Then the sorted reordering $X^\downarrow$ has distribution $\PD(\theta)$ if and only if the size-biased reordering $X^*$ has distribution $\GEM(\theta)$.
\end{thm}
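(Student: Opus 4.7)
The first observation is that both the decreasing reordering $X \mapsto X^\downarrow$ and the size-biased reordering $X \mapsto X^*$ factor through the \emph{multiset} of entries of $X$: the former depends on this multiset deterministically, while the latter uses an auxiliary independent randomization whose law depends only on the multiset (cf.\ the construction after \eqref{eq:sizeBiasedPermutation}). Consequently, if $X, Y$ are random elements of $\Delta^\infty_{=1}$ with $X^\downarrow \stackrel{d}{=} Y^\downarrow$, then automatically $X^* \stackrel{d}{=} Y^*$; and conversely, since $(X^*)^\downarrow = X^\downarrow$ almost surely, $X^* \stackrel{d}{=} Y^*$ implies $X^\downarrow \stackrel{d}{=} Y^\downarrow$. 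Hence it suffices to prove the single statement that the stick-breaking vector $V \sim \GEM(\theta)$ is invariant under size-biased reordering, i.e.\ $V^* \stackrel{d}{=} V$. Granting this, both implications of the theorem fall out immediately, because by definition $\PD(\theta) = \mathrm{Law}(V^\downarrow)$.

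To establish $V^* \stackrel{d}{=} V$ I would proceed recursively, exploiting the stick-breaking construction. Conditional on $U_1 = u_1$, a size-biased pick from $V = (U_1, (1-U_1)U_2, (1-U_1)(1-U_2)U_3, \ldots)$ selects the first coordinate with probability $V_1 = u_1$ and selects from the tail with probability $1-u_1$. By independence of the $U_i$, the rescaled tail $(V_2, V_3, \ldots)/(1-U_1)$ is itself distributed as $\GEM(\theta)$ and is independent of $U_1$. So, inductively, the first size-biased value drawn from the tail has the same law as $(1-U_1)W$, where $W$ is an independent $\Beta(1,\theta)$ variable. The plan is therefore to check by a direct CDF computation on $(0,1]$ that the mixture "take $U_1$ with weight $U_1$, take $(1-U_1)W$ with weight $1-U_1$'', with $U_1, W$ independent $\Beta(1,\theta)$, reproduces the $\Beta(1,\theta)$ law; this is the fixed-point identity that characterises $\Beta(1,\theta)$ among densities on $(0,1]$.

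Having identified the marginal law of $V_1^*$, I then promote the argument to the full sequence: conditional on $V_1^* = v$, the rescaled residual $(V_2^*, V_3^*, \ldots)/(1-v)$ should again be distributed as $\GEM(\theta)$, independent of $v$. One checks this case by case: if the first size-biased pick selected the head, independence is immediate from the independence of $(U_2, U_3, \ldots)$ from $U_1$; if it selected from the tail, one invokes the induction hypothesis on the rescaled tail, using the fact that size-biased reordering commutes with positive rescaling. Integrating out $U_1$ patches the two cases together and yields that $V^*$ has the stick-breaking law $\GEM(\theta)$.

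The main obstacle is the bookkeeping in this second stage: two different scenarios (pick-in-head vs.\ pick-in-tail) contribute to each joint density $f_{(V_1^*,\ldots,V_r^*)}$, and one must verify that they combine to give exactly the stick-breaking density without residual dependence on $U_1$. The marginal formula of Lemma~\ref{lem:MarginalSizeBiased} (extended from $\Delta^k_{=1}$ to $\Delta^\infty_{=1}$ by approximating $V$ with its first $k$ coordinates and letting $k \to \infty$) provides a clean bookkeeping device: one writes the $r$-th marginal of $V^*$ as an integral over the unknowns, substitutes the explicit symmetric density of $V$ obtained from the stick-breaking, and performs the same beta-function identity as in Lemma~\ref{lem:int_sp} to collapse the integral to the desired product form. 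Once the computation is carried out, the theorem follows from the multiset observation of the first paragraph.
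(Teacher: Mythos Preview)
The paper does not prove this theorem: it is quoted from Donnelly--Joyce~\cite{DJ89} and stated without argument, so there is nothing in the paper to compare your proposal against.

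Your reduction in the first paragraph is correct and is the standard one: both $X^\downarrow$ and $X^*$ factor through the multiset of entries, and $(X^*)^\downarrow = X^\downarrow$ almost surely, so the biconditional collapses to the single assertion that $\GEM(\theta)$ is invariant under size-biased permutation, $V^* \stackrel{d}{=} V$. This is indeed the heart of the Donnelly--Joyce result.

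One point of care in your sketch: the word ``inductively'' in the second paragraph is misleading, since there is no finite base case for an infinite stick-breaking. What the head/tail dichotomy actually gives is a \emph{fixed-point equation} for the law $\mu$ of $V_1^*$: draw $U_1 \sim \Beta(1,\theta)$, then with probability $U_1$ return $U_1$ and with probability $1-U_1$ return $(1-U_1)W$ with $W \sim \mu$ independent. You do allude to this (``the fixed-point identity that characterises $\Beta(1,\theta)$''), and checking that $\Beta(1,\theta)$ solves it is the CDF computation you describe; but you also need a line on uniqueness (e.g.\ via moments or a contraction argument) before you can conclude $\mu = \Beta(1,\theta)$. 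The same caveat applies to the joint recursion in your third paragraph. None of this is a structural gap---the strategy is the classical one and would go through once these self-referential steps are made rigorous.
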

We will use the above result in the following form.
\begin{cor}[\cite{DJ89}] \label{cor:lim=PD<->lim=GEM}
Let $X^{(n)}$ be a sequence of random variables on $\Delta_{= 1}^\infty$. Let $\theta > 0$. Then the sorted sequence $X^{(n)\downarrow}$ converges in distribution to $\PD(\theta)$ if and only if the size-biased sequence $X^{(n)*}$ converges in distribution to $\GEM(\theta)$.
\end{cor}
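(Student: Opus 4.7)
The strategy is to deduce the corollary from Theorem~\ref{thm:PDversusGEM} by showing that both the sorting map $x \mapsto x^\downarrow$ and the randomized size-biased reordering map $x \mapsto x^*$ are continuous operations on the set of probability measures supported on $\Delta_{=1}^\infty$. Two elementary observations drive the argument: (i) the size-biased reordering depends only on the multiset of entries of $x$, so $X^*$ and $(X^\downarrow)^*$ have the same law; (ii) sorting a size-biased reordering returns the sorted vector, so $(X^*)^\downarrow = X^\downarrow$ pointwise.

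Assuming these continuity statements, both directions are immediate. For the forward direction, if $X^{(n)\downarrow} \to \PD(\theta)$ in distribution, then applying the continuous size-biased reordering operation gives $(X^{(n)\downarrow})^* \to \PD(\theta)^* = \GEM(\theta)$ by Theorem~\ref{thm:PDversusGEM}, and by observation~(i) the left-hand side has the same law as $X^{(n)*}$. For the reverse direction, applying the continuous sorting map to $X^{(n)*} \to \GEM(\theta)$ gives $(X^{(n)*})^\downarrow \to \GEM(\theta)^\downarrow = \PD(\theta)$, and by observation~(ii) the left-hand side equals $X^{(n)\downarrow}$.

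The main technical step is the continuity of the size-biased reordering, which I would establish via a coupling. Let $U_1, U_2, \ldots$ be i.i.d.\ uniforms on $[0,1]$, and for a vector $y \in \Delta_{=1}^\infty$ define $y^*$ as the reordering obtained by listing the partial-sum bins $[S_{i-1}(y), S_i(y))$ of $y$ in the order in which the sequence $(U_j)$ first visits them. This realizes the law prescribed by~\eqref{eq:sizeBiasedPermutation} as a measurable functional of $y$ and the auxiliary randomness. Apply Skorokhod representation to reduce to almost-sure convergence $Y^{(n)} \to Y$ in product topology on $\Delta_{\le 1}^\infty$, with all vectors almost surely in $\Delta_{=1}^\infty$; Scheff\'e's lemma then upgrades this to $\ell^1$-convergence, so the partial-sum endpoints converge. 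Since almost surely no $U_j$ coincides with a partial sum of $Y$, each $U_j$ lies in the same bin for $Y^{(n)}$ as for $Y$ for all large $n$, yielding $(Y^{(n)})^* \to Y^*$ almost surely. Continuity of sorting is easier: on $\Delta_{=1}^\infty$ with the $\ell^1$ topology (equivalent to product topology there by Scheff\'e), sorting is $1$-Lipschitz by the Hardy--Littlewood--P\'olya rearrangement inequality.

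The main obstacle is precisely this continuity of size-biased reordering: the conditional probabilities~\eqref{eq:sizeBiasedPermutation} can be degenerate at vectors with vanishing entries and a priori require delicate handling near the boundary of the simplex. The uniform-based coupling circumvents the issue by realizing both $(Y^{(n)})^*$ and $Y^*$ as deterministic functionals of a single source of randomness, so that almost-sure convergence follows from the eventual stability of the bin assignments of the $(U_j)$, with no need to analyze the singular denominators of~\eqref{eq:sizeBiasedPermutation} directly.
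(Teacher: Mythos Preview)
The paper does not supply its own proof of this corollary: both Theorem~\ref{thm:PDversusGEM} and Corollary~\ref{cor:lim=PD<->lim=GEM} are stated with attribution to~\cite{DJ89} and no argument is given in the text. Your sketch is precisely the content of that reference---Donnelly and Joyce establish the continuity (in the sense of weak convergence of laws) of the ranking map $x \mapsto x^\downarrow$ and of the size-biased reordering $x \mapsto x^*$ on $\Delta_{=1}^\infty$, and the corollary follows exactly as you indicate via observations (i) and (ii). The coupling you describe, realizing the size-biased permutation as the order of first visits of i.i.d.\ uniforms to the partial-sum bins, is the standard device; combined with Skorokhod representation and the Scheff\'e upgrade from coordinatewise to $\ell^1$ convergence on $\Delta_{=1}^\infty$, it yields the required continuity at limits supported on $\Delta_{=1}^\infty$ (in particular at $\PD(\theta)$ and $\GEM(\theta)$, whose samples almost surely have all entries positive, so the bin boundaries are almost surely avoided by the $U_j$). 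Your sketch is correct, and since the paper defers entirely to~\cite{DJ89} there is no alternative in-paper argument to compare against.
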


In order to prove convergence towards $\GEM$ we will need the explicit description of its marginals.
\begin{prop}[\cite{DJ89}] \label{prop:pdfGEM}
Let $X = (X_1,X_2,\dots)$ be a random variable with distribution $\GEM(\theta)$. Then the distribution the $r$-first components $(X_1,X_2,\dots,X_r)$ of $X$ supported on $\Delta^{r}_{\leq 1}$ admit a distribution with density given by
\begin{equation} \label{eq:GEMdensity}
    \frac{\theta^{r} (1-x_1-\cdots -x_r)^{\theta-1}}{(1-x_1)(1-x_1-x_2)\cdots (1-x_1-\cdots -x_{r-1})}.
\end{equation}
\end{prop}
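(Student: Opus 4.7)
The plan is to apply the change-of-variables formula to the stick-breaking map and let a telescoping product do the rest.

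First I would note that $X_j = U_j \prod_{i<j}(1-U_i)$ depends only on $U_1, \ldots, U_j$, so the joint law of $(X_1, \ldots, X_r)$ is the pushforward of the product density $\prod_{j=1}^r \theta(1-u_j)^{\theta-1}$ on $(0,1)^r$ under the map $\Phi_r \colon (u_1, \ldots, u_r) \mapsto (x_1, \ldots, x_r)$ given by the stick-breaking formula. The image of $\Phi_r$ is the open simplex $\{x_j > 0,\ x_1 + \cdots + x_r < 1\}$, whose complement in $\Delta^r_{\le 1}$ has Lebesgue measure zero, so it suffices to compute the density on this open set.

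Next, setting $s_j \coloneqq 1 - x_1 - \cdots - x_j$ with the convention $s_0 = 1$, I would invert $\Phi_r$ via
\[
u_j = \frac{x_j}{s_{j-1}}, \qquad 1 - u_j = \frac{s_j}{s_{j-1}},
\]
which is straightforward to check by induction on $j$. Since $u_j$ depends only on $x_1, \ldots, x_j$, the inverse map is lower triangular with diagonal entries $\partial u_j/\partial x_j = 1/s_{j-1}$, so the Jacobian of $\Phi_r^{-1}$ has absolute value
\[
\prod_{j=1}^r \frac{1}{s_{j-1}} = \frac{1}{s_1 s_2 \cdots s_{r-1}}.
\]

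Finally, the change-of-variables formula expresses the density of $(X_1, \ldots, X_r)$ as
\[
\theta^r \prod_{j=1}^r (1-u_j)^{\theta-1} \cdot \frac{1}{s_1 s_2 \cdots s_{r-1}} = \theta^r \left(\prod_{j=1}^r \frac{s_j}{s_{j-1}}\right)^{\theta-1} \cdot \frac{1}{s_1 s_2 \cdots s_{r-1}}.
\]
The telescoping collapses the product to $s_r^{\theta-1}/s_0^{\theta-1} = s_r^{\theta-1}$, recovering exactly formula~\eqref{eq:GEMdensity}. There is no real obstacle here; the whole argument is an exercise in bookkeeping, and the only point that requires a moment's attention is recognising the telescoping, which becomes obvious once the residual lengths $s_j$ are introduced.
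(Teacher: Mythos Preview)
Your argument is correct: the change of variables from the i.i.d.\ $\Beta(1,\theta)$ variables $(U_1,\ldots,U_r)$ to the stick-breaking outputs $(X_1,\ldots,X_r)$ is lower triangular with the diagonal entries you state, and the telescoping of $\prod_j (s_j/s_{j-1})^{\theta-1}$ to $s_r^{\theta-1}$ gives exactly~\eqref{eq:GEMdensity}. There is nothing to add or repair.

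As for comparison with the paper: there is none to make. The paper does not prove Proposition~\ref{prop:pdfGEM}; it is quoted from~\cite{DJ89} and used as a black box (only to feed into the moment computation of Lemma~\ref{lem:GEMMoments}). Your derivation is the standard one and is precisely what one would expect to find in the cited reference or in any account of the $\GEM$ distribution.
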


In order to simplify computations, we consider moments of the $\GEM$ distribution that get
rid of the denominator in the density~\eqref{eq:GEMdensity}. Namely, for a random variable
$X = (X_1, X_2, \ldots)$ on $\Delta^\infty_{\leq 1}$ and $p = (p_1, \ldots, p_r)$ a $r$-tuple
of non-negative integers we define
\begin{equation} \label{eq:MpDefinition}
M_p(X) \coloneqq
\EE((1-X_1) \cdots (1-X_1-\cdots -X_{r-1}) \cdot X_1^{p_1}\cdots X_r^{p_r})
\end{equation}
These moments of $\GEM(\theta)$ are as follows.
\begin{lem}\label{lem:GEMMoments}
If $X=(X_1,X_2,\dots) \sim \GEM(\theta)$ and $(p_1, \ldots, p_r)$ is a non-negative integral
vector, then the moment $M_p(X)$ defined in~\eqref{eq:MpDefinition} has the following value
\[
M_p(X) = \frac{\theta^r \cdot (\theta-1)! \cdot p_1!\cdots p_r!}{\varGamma(p_1+ \cdots + p_r + \theta + r)!}.
\]
\end{lem}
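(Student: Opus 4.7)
The plan is to unfold the expectation using the explicit density of $(X_1,\dots,X_r)$ supplied by Proposition~\ref{prop:pdfGEM}, and then reduce the result to a classical multivariate Beta (Dirichlet) integral. The definition~\eqref{eq:MpDefinition} rewrites as
\[
M_p(X) = \int_{\Delta^r_{\leq 1}} (1-x_1)\cdots(1-x_1-\cdots-x_{r-1})\, x_1^{p_1}\cdots x_r^{p_r}\, f_{(X_1,\dots,X_r)}(x_1,\dots,x_r)\,dx,
\]
and the conceptual point of the lemma is that the weights in front were chosen precisely to annihilate the denominator appearing in~\eqref{eq:GEMdensity}. After that cancellation the formula collapses to
\[
M_p(X) = \theta^r \int_{\Delta^r_{\leq 1}} x_1^{p_1}\cdots x_r^{p_r}\,(1-x_1-\cdots-x_r)^{\theta-1}\,dx_1\cdots dx_r.
\]

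Next I would evaluate this Dirichlet-type integral by induction on $r$, using exactly the scaled Beta identity~\eqref{eq:beta} already invoked in the proof of Lemma~\ref{lem:int_sp}. Integrating out $x_r\in[0,\,1-x_1-\cdots-x_{r-1}]$ yields a factor $\frac{p_r!\,\varGamma(\theta)}{\varGamma(p_r+\theta+1)}$ together with $(1-x_1-\cdots-x_{r-1})^{p_r+\theta}$, which has the same shape as the original integrand with one fewer variable and with $\theta$ replaced by $p_r+\theta+1$. Iterating collapses the telescope and produces
\[
\int_{\Delta^r_{\leq 1}} x_1^{p_1}\cdots x_r^{p_r}(1-x_1-\cdots-x_r)^{\theta-1}\,dx = \frac{p_1!\cdots p_r!\,\varGamma(\theta)}{\varGamma(p_1+\cdots+p_r+r+\theta)},
\]
and multiplying by $\theta^r$ gives the stated expression (with $(\theta-1)!=\varGamma(\theta)$).

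There is no real obstacle: the argument is a one-line unfolding followed by a standard induction. The only delicate point is bookkeeping the cancellation between the weighting factors in $M_p$ and the denominator in the GEM density; once that is noticed, the computation is mechanical. This also explains a posteriori why $M_p$ was defined with those specific weights rather than as a plain mixed moment of $X$.
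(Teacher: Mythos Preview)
Your proof is correct and follows essentially the same route as the paper: cancel the denominator of the GEM density against the weights in $M_p$, then evaluate the remaining Dirichlet integral. The only cosmetic difference is that the paper introduces an extra variable $x_{r+1}=1-x_1-\cdots-x_r$ to rewrite the integral as one over $\Delta^{r+1}_{=1}$ and then invokes Lemma~\ref{lem:int_sp} directly, whereas you redo the underlying Beta-induction by hand; the content is the same.
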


\begin{proof}
By Proposition~\ref{prop:pdfGEM} we have
\begin{align*}
M_p(X) &= \int_{\Delta^r_{\leq 1}} \theta^{r} x_1^{p_1} \cdots x_r^{p_r} (1-x_1-\cdots -x_r)^{\theta-1} \, d\lambda^r_{\le 1}\\
&= \theta^r \int_{\Delta^{r+1}_{=1}} x_1^{p_1} \cdots x_r^{p_r} x_{r+1}^{\theta-1} \, d\lambda^{r+1}_{=1}.
\end{align*}
The last term is an instance of Lemma~\ref{lem:int_sp} on the simplex $\Delta^{r+1}_{=1}$.
Replacing the value obtained from the integration lemma gives the result.
\end{proof}

\section{Proof of the main theorem}
\label{sec:LargeGenus}

The aim of this section is to prove the following result
\begin{thm}\label{thm:GEM}
For $g \ge 2$ integral, $m \in \NN \cup \{+\infty\}$ and $\kappa > 1$ real, let $\tilde{L}^{(g,m,\kappa)*}$ be the size-biased version of the random variable $\tilde{L}^{(g,m,\kappa)}$ from Theorem~\ref{thm:reduction}. Then as $g$ tends to $\infty$, $\tilde{L}^{(g,m,\kappa)*}$ converges in distribution to $\GEM(1/2)$.
\end{thm}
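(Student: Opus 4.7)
The plan is to prove Theorem~\ref{thm:GEM} via the method of moments. Since $\Delta^\infty_{\le 1}$ is compact, a probability measure on it is determined by its polynomial moments, hence equivalently by the modified moments $M_p$ of~\eqref{eq:MpDefinition} that carry the extra weight $(1-X_1)(1-X_1-X_2)\cdots(1-X_1-\cdots-X_{r-1})$. It therefore suffices to show that for every tuple $p=(p_1,\dots,p_r)$ of non-negative integers,
\[
M_p(\tilde{L}^{(g,m,\kappa)*}) \xrightarrow[g\to\infty]{} M_p(\GEM(1/2))=\frac{\sqrt{\pi}\, p_1!\cdots p_r!}{2^r\, \varGamma(p_1+\cdots+p_r+r+1/2)},
\]
the right-hand side being the value given by Lemma~\ref{lem:GEMMoments} at $\theta=1/2$ (using $(-1/2)!=\varGamma(1/2)=\sqrt{\pi}$).

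Applying Lemma~\ref{lem:MarginalSizeBiased}, the weight inside $M_p$ cancels exactly the denominator $(1-x_1)\cdots(1-x_1-\cdots-x_{r-1})$ that appears in the size-biased marginal density. Since the density of $\tilde{L}^{(g,m,\kappa)}$ restricted to the stratum with exactly $k$ non-zero coordinates is symmetric in those coordinates ($F_{\Gamma_{g,k}}$ is symmetric in its $k$ arguments and we sum over $\boldsymbol{m}\in\{1,\dots,m\}^k$), the sum over $\sigma\in S_k$ from Lemma~\ref{lem:MarginalSizeBiased} merely contributes a factor $k!$. Combining this with $|\Aut(\Gamma_{g,k})|=2^k\,k!$ and Remark~\ref{rk:proba}, I obtain
\[
M_p(\tilde{L}^{(g,m,\kappa)*}) = \frac{1}{(6g-6)\, \tilde{b}_{g,m,\kappa}} \sum_{k=r}^{\kappa\log(6g-6)/2} \frac{1}{2^k(k-r)!} \sum_{\boldsymbol{m}\in\{1,\dots,m\}^k} I_{k,\boldsymbol{m}}^{(p)},
\]
where
\[
I_{k,\boldsymbol{m}}^{(p)} = \int_{\Delta^k_{=1}} x_1^{p_1+1}\cdots x_r^{p_r+1}\cdot \frac{F_{\Gamma_{g,k}}(x_1/\boldsymbol{m}_1,\dots,x_k/\boldsymbol{m}_k)}{\boldsymbol{m}_1\cdots \boldsymbol{m}_k}\, d\lambda^k_{=1}.
\]
The denominator $(6g-6)\tilde{b}_{g,m,\kappa}$ is precisely the same sum with $r=0$ (so that the weight $x_j^{p_j+1}$ is absent and $1/(k-r)!$ becomes $1/k!$), which puts the whole convergence problem on the same footing as the asymptotics of $\tilde{b}_{g,m,\kappa}$ handled by Theorem~\ref{thm:A+DGZZbgAsymptotics}.

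To carry out the asymptotic evaluation, I would expand $F_{\Gamma_{g,k}}=\prod_e x_e\cdot V_{g-k,2k}(x_1,x_1,\dots,x_k,x_k)$ monomial by monomial, integrate each monomial using Lemma~\ref{lem:int_sp} (noting that the denominator produced is $(6g-7+P+r)!$ with $P=p_1+\cdots+p_r$, independently of $k$ and of the multi-index of degrees), and then invoke Aggarwal's uniform asymptotics (Theorem~\ref{thm:AggarwalAsymptotics}) to replace the intersection numbers by their product formula; this is legitimate because $2k\le\kappa\log(6g-6)\ll\sqrt{g/800}$. The resulting constrained sum is then unlocked by the same generating-function identities used in~\cite{DGZZ20} to prove Theorem~\ref{thm:A+DGZZbgAsymptotics}: the partial summation $\sum_{e+e'=d} 2^{e+e'}/((2e+1)!(2e'+1)!)=2^{3d+1}/(2d+2)!$ together with the representation $1/(2d+2)=\tfrac12\int_0^1 z^{2d+1}\,dz$. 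The main obstacle is then extracting a clean limit: the factors $\sqrt{m/(m+1)}$ and $(4/3)^{4g-4}/(4g-4)!$ must cancel between numerator and denominator (consistent with the limit being independent of $m$ and $\kappa$), whereas the extra weight $x_j^{p_j+1}$ must produce, via the shifted factorials $(2d_j+p_j+2)!$ for $j\le r$, the denominator ratio $(6g-7)!/(6g-7+P+r)!\sim(6g)^{-P-r}$, and the ratio $k!/(k-r)!$ summed against the dominant value $k\sim\tfrac{\log g}{2}$, precisely the combinatorial factor $\sqrt{\pi}\,p_1!\cdots p_r!/(2^r\varGamma(P+r+1/2))$ predicted by Lemma~\ref{lem:GEMMoments}. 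Matching this identity and justifying the exchange of $g\to\infty$ with the sum over $k$ via uniform tail bounds is the technical heart of the argument.
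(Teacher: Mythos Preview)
Your strategy coincides with the paper's: reduce to convergence of the moments $M_p$ via Lemma~\ref{lem:momentMethod}, use Lemma~\ref{lem:MarginalSizeBiased} together with the symmetry of $F_{\Gamma_{g,k}}$ and $|\Aut(\Gamma_{g,k})|=2^k k!$ to obtain an explicit sum over $k$, replace the intersection numbers by their product form via Theorem~\ref{thm:AggarwalAsymptotics}, and compare with the asymptotics of $\tilde b_{g,m,\kappa}$. Up to this point your outline is exactly Theorem~\ref{thm:LgMomentsAsymptotics} and Lemma~\ref{lem:UgkDensity}, and the displayed intermediate expression you reach is correct.

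The gap is in the last paragraph. After all reductions one is left with the asymptotics of
\[
\sum_{k=r}^{\kappa\log(6g-6)/2}\frac{1}{(k-r)!}\sum_{j_1+\cdots+j_k=3g-3}\ \prod_{i=1}^k\frac{\zeta_m(2j_i)}{2j_i}\ \prod_{i=1}^r\frac{(2j_i+p_i)!}{(2j_i-1)!},
\]
and this is \emph{not} handled by the identities from~\cite{DGZZ20}, which treat only the case $r=0$. The paper introduces a new ingredient: the operator $D_p(f)=z\,\frac{d^{p+1}}{dz^{p+1}}(z^p f)$, so that the untruncated sum equals $[z^{2n}]\,G_{\theta,p}(z)$ with $G_{\theta,p}=\exp\bigl(\tfrac12 g_\theta(z^2)\bigr)\prod_{i=1}^r D_{p_i}\bigl(\tfrac12 g_\theta(z^2)\bigr)$. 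The identity $D_p(-\log(1-z))=p!/(1-z)^{p+1}-p!$ then gives the singular expansion at $z=\pm1$, and a transfer theorem produces the factor $p_1!\cdots p_r!/\varGamma(P+r+1/2)$. Your heuristic that ``the ratio $k!/(k-r)!$ against the dominant $k\sim\tfrac12\log g$'' yields the constant is not how it works: taken literally it predicts a divergent $(\log g)^r$, and in fact the factors $p_i!$ and $\varGamma(P+r+1/2)$ come from the singularity structure of $D_{p_i}g_\theta$, not from a pointwise maximiser in $k$. A second missing piece is the truncation: matching the partial sum in $k$ to the full sum requires a separate contour-integral bound (the paper's Theorem~\ref{thm:remainder}), since the transfer theorem applies only to the full generating function.
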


Let us first show how to derive our main Theorem~\ref{thm:PD} from Theorem~\ref{thm:GEM}.
\begin{proof}[Proof of Theorem~\ref{thm:PD}]
By Theorem~\ref{thm:reduction}, the random variables $L^{(g,m,\kappa)*}$ and $L^{(g,m)*}$ have the same limit distribution as $g \to +\infty$. Hence by Theorem~\ref{thm:GEM}, the random variable $L^{(g,m)*}$ converges in distribution towards $\GEM(1/2)$.

Finally Corollary~\ref{cor:lim=PD<->lim=GEM} shows that the convergence
in distribution of $L^{(g,m)*}$ towards $\GEM(1/2)$ is equivalent to
the convergence of $L^{(g,m)\da}$ towards $\PD(1/2)$. This concludes the proof
of Theorem~\ref{thm:PD}.
\end{proof}

\subsection{Moment's method}
Let us recall from Section~\ref{ssec:PoissonDirichletAndGEM} Equation~\eqref{eq:MpDefinition} that we defined some specific moments $M_{(p_1,\ldots,p_r)}(X)$ for a random variable $X$ on $\Delta^\infty_{=1}$. In this section, we show that the convergence of a sequence of random variables $X^{(n)}$ is equivalent to the convergence of all the moments $M_p(X^{(n)})$. This strategy called the \emph{method of moments} is a standard tool in probability, see for example~\cite[Section~30]{Billingsley} for the case of real variables.
\begin{lem}\label{lem:momentMethod}
A sequence of random variables $X^{(n)} = (X^{(n)}_1,X^{(n)}_2,\ldots)$ in $\Delta_{=1}^\infty$ converges in distribution to a random variable $X^{(\infty)}$ in $\Delta_{=1}^\infty$ if and only if for all $p=(p_1,\ldots,p_r)$ vector of non-negative integers we have $\lim_{n \to \infty} M_p(X^{(n)}) = M_p(X^{(\infty)})$.
\end{lem}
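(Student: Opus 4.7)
The strategy is the classical method of moments adapted to the infinite simplex. Define $h_p(x) := \prod_{i=1}^{r-1}(1-x_1-\cdots-x_i) \cdot x_1^{p_1}\cdots x_r^{p_r}$, so that $M_p(X) = \EE(h_p(X))$. For the forward direction, $h_p$ depends only on finitely many coordinates, each of which lies in $[0,1]$; hence $h_p$ is continuous and uniformly bounded on the compact space $\Delta_{\leq 1}^\infty$, and the portmanteau theorem yields $M_p(X^{(n)}) \to M_p(X^{(\infty)})$.

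For the reverse direction I exploit compactness: since $\Delta_{\leq 1}^\infty$ is compact in the product topology, so is its space of Borel probability measures for weak convergence. Every subsequence of $(X^{(n)})$ therefore admits a further weakly convergent subsequence with limit some probability measure $Y$ on $\Delta_{\leq 1}^\infty$, and the forward direction yields $M_p(Y) = \lim_n M_p(X^{(n)}) = M_p(X^{(\infty)})$ for every $p$. It thus suffices to prove that the family $\{M_p\}_p$ is measure-determining on $\Delta_{\leq 1}^\infty$: this will force $Y = X^{(\infty)}$ for every subsequential limit, hence convergence of the whole sequence.

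To prove measure-determination, let $\mu$, $\nu$ be probability measures on $\Delta_{\leq 1}^\infty$ with $M_p(\mu) = M_p(\nu)$ for every $p$, and denote by $\mu_r$, $\nu_r$ their pushforwards under the projection onto the first $r$ coordinates $\pi_r \colon \Delta_{\leq 1}^\infty \to \Delta_{\leq 1}^r$. I claim $\mu_r = \nu_r$ for every $r$ by induction; Kolmogorov's uniqueness theorem applied to the cylinder $\sigma$-algebra then gives $\mu = \nu$. For $r=1$, the identities $\int x_1^{p_1}\, d\mu_1 = \int x_1^{p_1}\, d\nu_1$ determine $\mu_1$ on $[0,1]$ by the Hausdorff moment problem. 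For the inductive step, set $g_r(x) := \prod_{i=1}^{r-1}(1-x_1-\cdots-x_i)$; the assumed equality of all $M_p$'s with $p$ of length $r$ says that the weighted measures $g_r\, d\mu_r$ and $g_r\, d\nu_r$ on the compact set $\Delta_{\leq 1}^r$ share all polynomial moments, so Stone--Weierstrass implies $g_r\, d\mu_r = g_r\, d\nu_r$. On the open region $\{x_1+\cdots+x_{r-1}<1\}$ where $g_r>0$, dividing by $g_r$ gives $\mu_r = \nu_r$. On the face $\{x_1+\cdots+x_{r-1}=1\}$, the simplex constraint $x_1+\cdots+x_r \leq 1$ forces $x_r=0$, so both $\mu_r$ and $\nu_r$ are carried by $\{x_r=0\}$ there; under the canonical identification $(x_1,\dots,x_{r-1},0) \leftrightarrow (x_1,\dots,x_{r-1})$ these restrictions correspond to the restrictions of $\mu_{r-1}$ and $\nu_{r-1}$ to $\{x_1+\cdots+x_{r-1}=1\}$, which agree by the inductive hypothesis. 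Hence $\mu_r = \nu_r$.

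I expect the delicate point to be the treatment of the degenerate face $\{x_1+\cdots+x_{r-1}=1\}$, where the weight $g_r$ vanishes and the Stone--Weierstrass argument alone gives no information. The observation that salvages the induction is that on this face the simplex constraint automatically forces $x_r=0$, collapsing the behaviour of $\mu_r$ and $\nu_r$ there to a lower-dimensional instance already handled by the inductive hypothesis. The remaining ingredients (portmanteau for the forward direction, compactness of probability measures on $\Delta_{\leq 1}^\infty$ to extract subsequential limits, and Kolmogorov's theorem to reassemble finite-dimensional marginals) are entirely standard.
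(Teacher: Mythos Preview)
Your proof is correct and takes a genuinely different route from the paper's. The paper argues directly that the linear span of the functions $h_p$ is dense in $C(\Delta_{\leq 1}^\infty,\RR)$: it notes that this span is a point-separating unital subalgebra and invokes Stone--Weierstrass on the compact space $\Delta_{\leq 1}^\infty$. (The paper actually asserts that $S$ itself is closed under multiplication; this is not literally true---e.g.\ $(1-x_1)^2\notin S$---but the \emph{span} of $S$ is, since for $|p|=r\le s=|q|$ one has $h_p h_q=\prod_{i<r}(1-s_i)\,h_{p+q}$ and multiplication by each $(1-s_i)$ with $i<r\le s$ keeps one inside the span.)

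Your argument instead proves measure-determination by induction on the finite-dimensional marginals: the Hausdorff moment problem handles $r=1$, finite-dimensional Stone--Weierstrass gives $g_r\,d\mu_r=g_r\,d\nu_r$, and the boundary face $\{s_{r-1}=1\}$ is disposed of by the observation that the simplex constraint forces $x_r=0$ there, reducing to $\mu_{r-1}=\nu_{r-1}$. This is longer but more elementary---it never invokes Stone--Weierstrass on an infinite-dimensional space---and it makes transparent exactly why the weight $g_r=\prod_{i<r}(1-s_i)$ loses no information. The paper's route is slicker once the subalgebra property is checked; yours is more self-contained and highlights the role of the simplex geometry.
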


\begin{proof}
The infinite-dimensional cube $[0,1]^\NN$ is compact with respect to the product topology by Tychonoff's theorem. The set $\Delta_{\leq 1}^\infty$ is a closed subset of $[0,1]^\NN$, and is therefore compact. The signed measures on $\Delta_{\leq 1}^\infty$ are identified with the dual of the real continuous function $C(\Delta_{\leq 1}^\infty, \RR)$. In particular, we have the convergence of $X^{(n)}$ towards $X^{(\infty)}$ in distribution if and only if for any continuous function $f \in C(\Delta_{\le 1}^\infty, \RR)$ we have the convergence of $\EE(f(X^{(n)}))$ towards $\EE(f(X^{(\infty)}))$.

Now let $S$ be the set of functions in $C(\Delta_{\leq 1}^\infty, \RR)$ of the form
\[
    (1-x_1)(1-x_1-x_2)\cdots (1-x_1-\cdots -x_{r-1}) \cdot x_1^{p_1}\cdots x_r^{p_r},
\]
with $r\geq 0$, $p_1,\dots,p_r \geq 0$. We claim that the span of $S$ (that is finite linear combinations of elements of $S$) is dense in $C(\Delta_{\leq 1}^\infty, \RR)$.

Indeed, $S$ contains $1$ and is stable under multiplication. Therefore, the algebra generated by $S$ is equal to its span.

Now, the set $S$ is a separating subset of $C(\Delta_{\leq 1}^\infty,\RR)$ and density follows from the Stone--Weierstrass theorem.
\end{proof}

We will use the following asymptotic simplification of the moments.
\begin{thm}\label{thm:LgMomentsAsymptotics}
    For $g \geq 2$ integral, $m \in \NN \cup \{+\infty\}$ and $\kappa > 1$ real, let $\tilde{L}^{(g,m,\kappa)*}$ be the size-biased reordering of the random variable $\tilde{L}^{(g,m,\kappa)}$ from Theorem~\ref{thm:reduction}. Let $r\geq 1$ and $p_1,\dots,p_r\in\NN$.
    Then, as $g \to \infty$, the moment $M_p(\tilde{L}^{(g,m,\kappa)*})$ is asymptotically equivalent to
\[
\frac{\sqrt{\frac{m+1}{m}} \cdot \sqrt{\pi}}{2 \cdot (6g-6)^{p_1+\cdots+p_r+r-1/2}}
\sum_{k=r}^{\kappa \frac{\log(6g-6)}{2}} \frac{1}{(k-r)!} \sum_{\substack{(j_1,\dots,j_k) \in \NN^k \\ j_1+\cdots +j_k = 3g-3}}
    \prod_{i=1}^k \frac{\zeta_m(2 j_i)}{2 j_i} \prod_{i=1}^r \frac{(2 j_i + p_i)!}{(2j_i - 1)!}
\]
where
\[
\zeta_m(s) \coloneqq \sum_{n = 1}^m \frac{1}{n^s}
\]
is the partial Riemann zeta function.
\end{thm}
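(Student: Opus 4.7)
The strategy is to decompose $M_p(\tilde{L}^{(g,m,\kappa)*})$ across the mixture~\eqref{eq:LgkappaAsSum}. Size-biased reordering commutes with mixtures (by conditioning on the mixture component), and since the first $r$ size-biased entries of a vector with exactly $k$ positive coordinates depend only on those positive entries whenever $k\ge r$, the symmetrization operator $G_{\Gamma_{g,k},3g-3}$ plays no role and only the $k\ge r$ terms contribute. One therefore obtains
\[
M_p(\tilde{L}^{(g,m,\kappa)*}) = \frac{1}{(6g-6)\,\tilde{b}_{g,m,\kappa}} \sum_{k=r}^{\kappa\frac{\log(6g-6)}{2}} \frac{1}{|\Aut(\Gamma_{g,k})|} \sum_{\boldsymbol{m}\in\{1,\ldots,m\}^k} M_p\bigl(\mu^{*}_{\Gamma_{g,k},\boldsymbol{m}}\bigr),
\]
where $|\Aut(\Gamma_{g,k})| = k!\,2^k$ (the loops permute and individually flip) and $M_p(\mu^{*})$ is the natural unnormalized analogue of~\eqref{eq:MpDefinition}.

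For each $(\Gamma_{g,k},\boldsymbol{m})$, I apply Lemma~\ref{lem:MarginalSizeBiased}: the stick-breaking denominator $(1-x_1)(1-x_1-x_2)\cdots(1-x_1-\cdots-x_{r-1})$ in the marginal density of $\mu^{*}_{\Gamma_{g,k},\boldsymbol{m}}$ cancels exactly against the same factor appearing inside~\eqref{eq:MpDefinition}. What remains is an integral over $\Delta^k_{=1}$ of the sum $\sum_{\iota} x_{i_1}^{p_1+1}\cdots x_{i_r}^{p_r+1}$ (over injective maps $\iota:\{1,\ldots,r\}\hookrightarrow\{1,\ldots,k\}$) against the density of $\mu_{\Gamma_{g,k},\boldsymbol{m}}$. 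To evaluate it I expand $F_{\Gamma_{g,k}}(x_1,\ldots,x_k) = x_1\cdots x_k\cdot V_{g-k,2k}(x_1,x_1,\ldots,x_k,x_k)$ via the Kontsevich polynomial, replace each $\psi$-intersection number by Aggarwal's asymptotic (Theorem~\ref{thm:AggarwalAsymptotics}, uniformly valid since $2k\le \kappa\log(6g-6) \ll \sqrt{g/800}$), and collapse the two $\psi$-indices $(d_{2i-1},d_{2i})$ attached to each loop using
\[
\sum_{d+d'=n}\frac{1}{(2d+1)!\,(2d'+1)!} = [y^{2n+2}]\sinh(y)^2 = \frac{2^{2n+1}}{(2n+2)!}.
\]
Setting $j_i = n_i+1 \ge 1$ (so that $\sum j_i = 3g-3$), applying Lemma~\ref{lem:int_sp} to obtain the denominator $(6g-7+r+|p|)!$, summing each $\boldsymbol{m}_i$ over $\{1,\ldots,m\}$ independently to produce the $\zeta_m(2j_i)$ factors, and exploiting the symmetry of the $j$-sum to convert $\sum_\iota$ into $\tfrac{k!}{(k-r)!}\prod_{t=1}^{r}\tfrac{(2j_t+p_t)!}{(2j_t-1)!}$, the resulting expression matches the sum in the claim up to a single $k$-dependent scalar prefactor $A_k$ collecting all factorials and powers.

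The main technical obstacle is to verify that
\[
A_k \;\coloneqq\; \frac{\pi\,(4g-4)!\,(3/4)^{4g-4}\,(6g-2k-5)!!\,2^{6g-6-2k}\,\sqrt{(m+1)/m}}{(g-k)!\,24^{g-k}\,(6g-7+r+|p|)!}
\]
is asymptotically equal to the claimed $k$-independent value $\sqrt{\pi(m+1)/m}/(2(6g-6)^{|p|+r-1/2})$, uniformly for $r\le k\le \kappa\log(6g-6)/2$; here the factor $\sqrt{(m+1)/m}$ and the prefactor $\pi(4g-4)!(3/4)^{4g-4}$ originate from the normalization $((6g-6)\tilde{b}_{g,m,\kappa})^{-1}$ via Theorem~\ref{thm:A+DGZZbgAsymptotics}. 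Rewriting $(6g-2k-5)!! = (6g-2k-4)!/(2^{3g-k-2}(3g-k-2)!)$ and running Stirling's formula term by term, the $k$-dependent powers of $g$ emerging from $(4g-4)^{4g-4}(3/4)^{4g-4}$, $(6g-2k-4)^{3g-k-2}$, $(g-k)^{g-k}$, $(3g-k-2)^{3g-k-2}$, and $(6g-7+r+|p|)^{6g-7+r+|p|}$ collapse into the $k$-free quantity $(6g)^{1-r-|p|}/\sqrt{g}$, while the remaining constants $3^{(\cdot)}$, $2^{(\cdot)}$, and $\pi$ assemble into $\sqrt{\pi}/2$. This Stirling bookkeeping is uniform in $k$ within the stated range and parallels the derivation of Theorem~\ref{thm:A+DGZZbgAsymptotics} (which is the $r=|p|=0$ instance after the inner sums over $J$ and $k$ are carried out), so the uniform estimate can be imported directly from the techniques of~\cite{DGZZ20}.
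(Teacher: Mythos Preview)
Your proposal is correct and follows essentially the same route as the paper's proof: decompose the moment over the mixture~\eqref{eq:LgkappaAsSum}, apply Lemma~\ref{lem:MarginalSizeBiased} to each component so that the stick-breaking denominator cancels, expand $F_{\Gamma_{g,k}}$, replace the $\psi$-correlators by Aggarwal's asymptotic (valid uniformly in the range $k\le\kappa\log(6g-6)/2$), integrate via Lemma~\ref{lem:int_sp}, sum over $\boldsymbol{m}$ to produce the $\zeta_m(2j_i)$ factors, and finally simplify the factorial prefactor. The only organisational differences are that the paper packages the per-$k$ computation as a separate statement (Lemma~\ref{lem:UgkDensity}), hides your $\sinh^2$ identity inside the normalising coefficients $\tilde{c}_{g,k}$ borrowed from~\cite{DGZZ20}, and quotes the ready-made asymptotic~\eqref{eq:simplify1} from~\cite[Equation~(3.13)]{DGZZ20} in place of your direct Stirling computation of $A_k$; your explicit treatment of why the symmetrisation $G_{\Gamma_{g,k},3g-3}$ is invisible to the first $r$ size-biased picks is in fact more careful than the paper's one-line justification.
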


Following~\cite[Equation~(14)]{DGZZ20}, we define
\[
    \begin{split}
    c_{g,k}(d_1,\dots,d_k)
    & \coloneqq \frac{g! \cdot (3g-3+2k)!}{(6g-5+4k)!} \frac{3^g}{2^{3g-6+5k}}
    \cdot (2d_1+2)!\cdots (2d_k+2)! \\
    & \qquad \cdot \sum_{\substack{d_i^-+d_i^+ = d_i \\ d_i^-,d_i^+\geq 0, 1\leq i\leq k}} \frac{\langle\tau_{d_1^-}\tau_{d_1^+} \cdots \tau_{d_k^-}\tau_{d_k^+}\rangle_{g,2k}}{d_1^-! d_1^+! \cdots d_k^- d_k^+!}.
    \end{split}
\]
The above coefficients were introduced because by~\cite[Lemma~3.5]{DGZZ20}, and we have
\[
    \lim_{g \to \infty} \sup_{k \leq \sqrt{g/800}} |c_{g,k}(d_1,\dots,d_k) - 1| = 0.
\]
This asymptotic result is a direct consequence of Theorem~\ref{thm:AggarwalAsymptotics} of A.~Aggarwal that we stated in the introduction. We define $\tilde{c}_{g,k}(j_1,\ldots,j_k) \coloneqq c_{g-k,k}(j_1-1,\dots,j_k-1)$.
\begin{lem} \label{lem:UgkDensity}
For each $k=1,\ldots,3g-3$, let $U^{(g,m,k)}$ be the random variable on $\Delta^k_{=1}$ with density
\begin{equation} \label{eq:UgkDensity}
\frac{(6g-7)!}{\cZ_m(F_{g,k})} \sum_{\substack{\boldsymbol{m} \in \NN^k \\ \forall i \in \{1, \ldots, k\}, \boldsymbol{m}_i \le m}} F_{g,k} \left(\frac{x_1}{\boldsymbol{m}_1}, \dots, \frac{x_k}{\boldsymbol{m}_k} \right) \cdot \frac{1}{\boldsymbol{m}_1 \cdots \boldsymbol{m}_k}
\end{equation}
where we use the notation $F_{g,k}$ for $F_{\Gamma_{g,k}}$. Then for any $p = (p_1, \ldots, p_r) \in \NN^k$ we have
\begin{align*}
    M_p(U^{(g,k)*})
    & =
    \frac{w_{g,k} \cdot k!}{\cZ_m(F_{g,k}) \cdot (k-r)! \cdot (6g-7 + p_1 + \cdots + p_r + r)!} \\
    & \qquad
    \cdot \sum_{\substack{(j_1,\dots,j_k) \in\NN^k \\ j_1 + \dots + j_k = 3g-3}}
        \tilde{c}_{g,k}(j_1,\dots,j_k) \prod_{i=1}^k \frac{\zeta_m(2j_i)}{2j_i}
        \prod_{i=1}^r \frac{(2j_i+p_i)!}{(2j_i-1)!}
\end{align*}
where
\[
w_{g,k} \coloneqq
\frac{(6g-5-2k)! \cdot (6g-7)!}{(g-k)! \cdot (3g-3-k)!} \cdot \frac{2^{3k-3}}{3^{g-k}}.
\]
\end{lem}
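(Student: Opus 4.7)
\medskip

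\noindent\textbf{Proof proposal.} The strategy is to start from the general size-biased marginal formula of Lemma~\ref{lem:MarginalSizeBiased} and unfold the polynomial $F_{\Gamma_{g,k}}$ in terms of $\psi$-intersection numbers, then integrate term by term using the elementary simplex integral of Lemma~\ref{lem:int_sp}. The essential simplification that makes this work is that the density~\eqref{eq:UgkDensity} is symmetric in $(x_1,\ldots,x_k)$: indeed $F_{\Gamma_{g,k}}$ is symmetric (the $k$ loops of $\Gamma_{g,k}$ are interchangeable), and summing over $\boldsymbol{m}\in\{1,\dots,m\}^k$ preserves symmetry. Hence $\sum_{\sigma\in S_k} f_U(x_{\sigma(1)},\dots,x_{\sigma(k)}) = k!\, f_U(x_1,\dots,x_k)$.

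Plugging the marginal density from Lemma~\ref{lem:MarginalSizeBiased} into the definition~\eqref{eq:MpDefinition} of $M_p$, the denominator $(1-x_1)\cdots(1-x_1-\cdots-x_{r-1})$ from the marginal cancels exactly with the matching factors in the moment integrand. Combining this cancellation with the symmetry of $f_U$, I obtain the clean expression
\[
M_p(U^{(g,m,k)*}) \;=\; \frac{k!}{(k-r)!}\int_{\Delta^k_{=1}} x_1^{p_1+1}\cdots x_r^{p_r+1}\, f_U(x_1,\dots,x_k)\, d\lambda^k_{=1}.
\]
I would then write $F_{\Gamma_{g,k}}(x_1,\dots,x_k) = x_1\cdots x_k\cdot V_{g-k,2k}(x_1,x_1,\dots,x_k,x_k)$, expand the Kontsevich polynomial using the definition from Section~\ref{subsec:PsiClasses}, and collect variables by setting $j_i = d_i^- + d_i^+ + 1$ so that each monomial reads $\prod_i x_i^{2j_i-1}$ with $\sum_i j_i = 3g-3$. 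After substituting $x_i\mapsto x_i/\boldsymbol{m}_i$ and summing $\boldsymbol{m}_i$ from $1$ to $m$, each loop contributes a factor $\zeta_m(2j_i)$.

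The remaining integral $\int_{\Delta^k_{=1}} \prod_{i\le r} x_i^{p_i+2j_i}\prod_{i>r} x_i^{2j_i-1}\, d\lambda^k_{=1}$ is handled directly by Lemma~\ref{lem:int_sp}, producing the denominator $(p_1+\cdots+p_r+6g-7+r)!$ and a numerator that, after the algebraic identity $(2j_i-1)!/(2j_i)! = 1/(2j_i)$ used to shuffle factorials, combines with the intersection-number sum into $\tilde{c}_{g,k}(j_1,\ldots,j_k)\cdot \prod_i \frac{\zeta_m(2j_i)}{2j_i}\cdot \prod_{i\le r}\frac{(2j_i+p_i)!}{(2j_i-1)!}$. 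The residual prefactor collects $(6g-7)!$, the power of $2$ coming from $V_{g-k,2k}$ (which is $2^{-(3g-k-3)}$), and the rearrangement factors $\tfrac{(6g-2k-5)!}{(g-k)!(3g-k-3)!}\cdot\tfrac{2^{3g+2k-6}}{3^{g-k}}$ absorbed into the definition of $\tilde c_{g,k}$; merging powers of $2$ via $2^{3g+2k-6}/2^{3g-k-3}=2^{3k-3}$ yields precisely the constant $w_{g,k}$ of the statement.

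The main obstacle is not conceptual but bookkeeping: one must carefully track the powers of $2$ and $3$, the several competing factorials $(6g-7)!$, $(6g-2k-5)!$, $(3g-k-3)!$, $(g-k)!$, $(2j_i)!$, $(2j_i-1)!$, and reconcile them with the normalisation baked into $\tilde c_{g,k}$ via $c_{g-k,k}(j_1-1,\dots,j_k-1)$. The cancellation of $(1-x_1-\cdots-x_{i-1})$ factors and the identity $(2j_i-1)!/(2j_i)!=1/(2j_i)$ are the two small algebraic tricks that organise the computation into the compact form of the statement.
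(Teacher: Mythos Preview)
Your proposal is correct and follows essentially the same route as the paper: both arguments feed the symmetric density~\eqref{eq:UgkDensity} into Lemma~\ref{lem:MarginalSizeBiased}, exploit the cancellation of the $(1-x_1)\cdots(1-x_1-\cdots-x_{r-1})$ factors against the weight in $M_p$, expand $F_{g,k}$ via the Kontsevich polynomial and the $\tilde c_{g,k}$ normalisation, and evaluate the resulting monomial integrals with Lemma~\ref{lem:int_sp}. The only organisational difference is that the paper integrates in two stages (first marginalising out $x_{r+1},\dots,x_k$ to obtain an explicit density for $(U^*_1,\dots,U^*_r)$, then integrating against the moment weight), whereas you collapse these via Fubini into a single integral over $\Delta^k_{=1}$; this is a harmless and mildly cleaner reordering, not a different idea.
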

Note that Formula~\eqref{eq:UgkDensity} is the density of a probability measure by Remark~\ref{rk:proba}. It is more precisely the density of the asymptotic normalized vector of length of random multicurves restricted to multicurves of the type $\Gamma_{g,k}$.

\begin{proof}
By definition of the stable graph polynomial we have
\begin{align*}
&F_{g,k}(x_1, x_2, \ldots, x_k)
=
x_1 \cdots x_k \cdot V_{g-k,2k}(x_1, x_1, x_2, x_2, \ldots, x_k, x_k)
= \\
&\frac{1}{2^{3g-3-k}}
\sum_{\substack{(d_1^-,d_1^+,\dots,d_k^-,d_k^+)\in\ZZ_{\geq 0}^{2k} \\ d_1^- + d_1^+ + \cdots + d_k^-+d_k^+ = 3g-3-k}} \frac{\langle \tau_{d_1^-}\tau_{d_1^+}\cdots \tau_{d_k^-}\tau_{d_k^+}\rangle_{g-k,2k}}{d_1^-!d_1^+!\cdots d_k^-! d_k^+!} \, x_1^{2(d^+_1+d^-_1)+1}\cdots x_k^{2(d^+_k+d^-_k)+1}.
\end{align*}
Using the coefficients $\tilde{c}_{g,k}$ defined just above the statement of the lemma, we rewrite the polynomial $F_{g,k}$ as
\[
F_{g,k} =
\frac{(6g-5-2k)!}{(g-k)! \cdot (3g-3-k)!} \cdot \frac{2^{3k-3}}{3^{g-k}}
\sum_{\substack{(j_1,\dots,j_k) \in \NN^k \\ j_1 + \dots + j_k = 3g-3}}
\tilde{c}_{g,k}(j_1,\dots,j_k) \prod_{i=1}^k \frac{x_i^{2j_i-1}}{(2j_i)!}.
\]
Hence the density of $U^{(g,m,k)}$ in~\eqref{eq:UgkDensity} can be rewritten as
\[
\frac{w_{g,k}}{\cZ_m(F_{g,k})}
\sum_{\substack{(j_1,\dots,j_k) \in \NN^k \\ j_1 + \dots + j_k = 3g-3}}
\tilde{c}_{g,k}(j_1,\dots,j_k) \prod_{i=1}^k \zeta_m(2j_i) \frac{x_i^{2j_i-1}}{(2j_i)!}.
\]

Now, by Lemma~\ref{lem:MarginalSizeBiased}, the $r$-th marginal of the sized-biased version $U^{(g,m,\kappa)*}$ of $U^{(g,m,\kappa)}$ is
\begin{align*}
    & \frac{w_{g,k} \cdot k!}{\cZ_m(F_{g,k}) \cdot (k-r)!} \cdot \frac{1}{(1-x_1) \cdots (1-x_1-\cdots-x_{r-1})} \\
    & \qquad \cdot \sum_{\substack{(j_1,\dots,j_k)\in\NN^k \\ j_1 + \dots + j_k = 3g-3}} \tilde{c}_{g,k}(j_1,\dots,j_k) \prod_{i=1}^k \frac{\zeta_m(2j_i)}{(2j_i)!} \prod_{i=1}^r x_i^{2j_i} \\
    & \qquad \qquad \cdot \int_{\Delta_{= 1-x_1-\cdots-x_r}^k} x_{r+1}^{2j_{r+1}-1} \cdots x_k^{2j_k-1} \, d\lambda^k_{=1}(x_{r+1},\ldots,x_k)
\end{align*}
In the above, we used the fact that the density of $U^{(g,m,k)}$ is a symmetric function. Hence the sum over all
permutations of $k$ elements only pops out a $k!$ coefficient.
The value of the integral in the above sum follows from Lemma~\ref{lem:int_sp} and is equal to
\[
\frac{(2j_{r+1}-1)! \cdots (2j_k-1)!}{(2j_{r+1} + \cdots 2j_k - 1)!}
 (1 - x_1 - \cdots - x_r)^{2j_{r+1} + \cdots + 2j_k - 1}.
\]
We end up with the following formula for the distribution of the $r$-th marginal of $U^{(g,m,k)*}$
\begin{align*}
    \frac{w_{g,k} \cdot k!}{\cZ_m(F_{g,k}) \cdot (k-r)!}
    \sum_{\substack{(j_1,\dots,j_k)\in\NN^k \\ j_1 + \dots + j_k = 3g-3}}
    & \frac{\tilde{c}_{g,k}(j_1,\dots,j_k)}{(2j_{r+1} + \cdots 2j_k - 1)!} \cdot
    \prod_{i=1}^k \frac{\zeta_m(2j_i)}{2j_i} \prod_{i=1}^r \frac{x_i^{2j_i}}{(2j_i-1)!} \\
    & \qquad \cdot \frac{(1 - x_1 - \cdots - x_r)^{2j_{r+1} + \cdots + 2j_k - 1}}{(1-x_1) \cdots (1-x_1-\cdots-x_{r-1})}.
\end{align*}
From the above formula and the definition of the moment $M_p$ in~\eqref{eq:MpDefinition}, the moment $M_p(U^{(g,m,\kappa)*})$ equals
\begin{align*}
    & \frac{w_{g,k} \cdot k!}{\cZ_m(F_{g,k}) \cdot (k-r)!}
    \sum_{\substack{(j_1,\dots,j_k\in\NN^k) \\ j_1 + \dots + j_k = 3g-3}}
    \frac{\tilde{c}_{g,k}(j_1,\dots,j_k)}{(2j_{r+1} + \cdots 2j_k - 1)!}
    \prod_{i=1}^k \frac{\zeta_m(2j_i)}{2j_i} \prod_{i=1}^r \frac{1}{(2j_i-1)!} \\
    & \qquad \cdot \int_{\Delta^r_{\le 1}}
    x_1^{2j_1+p_1}\cdots x_r^{2j_r+p_r} (1 - x_1 - \cdots - x_r)^{2j_{r+1} + \cdots + 2j_k - 1} \, d\lambda_{\le 1}^r.
\end{align*}
Lemma~\ref{lem:int_sp} gives the value of the above integral
\begin{align*}
    \int_{\Delta^r_{\le 1}}
    & x_1^{2j_1+p_1}\cdots x_r^{2j_r+p_r} (1 - x_1 - \cdots - x_r)^{2j_{r+1} + \cdots + 2j_k - 1} \, d \lambda_{\le 1}^r \\
    & \qquad = \int_{\Delta^{r+1}_{= 1}}
    x_1^{2j_1+p_1}\cdots x_r^{2j_r+p_r} x_{r+1}^{2j_{r+1} + \cdots + 2j_k - 1} \, d \lambda_{= 1}^{r+1} \\
    & \qquad = \frac{(2j_1+p_1)! \cdots (2j_r+p_r)! \cdot (2j_{r+1} + \cdots + 2j_k-1)!}{(6g-6+p_1+\cdots+p_r + r - 1)!}.
\end{align*}
Substituting the above value in our last expression for $M_p(U^{(g,m,\kappa)*})$ gives the announced formula.
\end{proof}

\begin{proof}[Proof of Theorem~\ref{thm:LgMomentsAsymptotics}]
Because the distribution of $\tilde{L}^{(g,m,\kappa)}$ is a weighted sum of distributions, we can perform the computation of the moments for each term in the sum and gather the result in the end.
More precisely, we have
\begin{equation} \label{eq:momentsLgkappaAsSum}
M_p(\tilde{L}^{(g,m,\kappa)*}) =
\frac{1}{(6g-6)! \cdot \tilde{b}_{g,m,\kappa}}
\sum_{k=1}^{\kappa \frac{\log(6g-6)}{2}}
\frac{\cZ_m(F_{g,k})}{2^k \cdot k!} \cdot M_p(U^{(g,m,k)*})
\end{equation}
where $\tilde{b}_{g,m,\kappa}$ was defined in~\eqref{eq:bgmkappa}.

Now substituting the formula for $M_p(U^{(g,m,k)*})$ from Lemma~\ref{lem:UgkDensity} and the asymptotic value of $\tilde{b}_{g,m,\kappa}$ from Theorem~\ref{thm:A+DGZZtruncatedSum} in the sum~\eqref{eq:momentsLgkappaAsSum}, we have as $g \to \infty$ the asymptotic equivalence
\begin{equation} \label{eq:asymp1}
\begin{split}
M_p(L^{(g,m,\kappa)*})
& \sim
\frac{ (4g-4)! \cdot \pi}{ (6g-7)! \cdot (6g-7+p_1+\cdots+p_r+r)!} \cdot \sqrt{\frac{m+1}{m}} \cdot \left( \frac{3}{4} \right)^{4g-4} \\
& \qquad \cdot \sum_{k=1}^{\kappa \frac{\log(6g-6)}{2}}
\bigg(\frac{1}{2^k \cdot (k-r)!} \cdot \frac{(6g-5-2k)! \cdot (6g-7)! \cdot 2^{3k-3}}{(g-k)! \cdot (3g-3-k)! \cdot 3^{g-k}}  \\
& \qquad \qquad \cdot \sum_{\substack{(j_1,\dots,j_k)\in\NN^k \\ j_1 + \dots + j_k = 3g-3}}
\prod_{i=1}^k \frac{\zeta_m(2j_i)}{2j_i}
\prod_{i=1}^r \frac{(2j_i+p_i)!}{(2j_i-1)!} \bigg)
.
\end{split}
\end{equation}
where we have used that $\tilde{c}_{g,k}(j_1,\dots,j_k) \sim 1$ uniformly in $k \in [1,\kappa \log(6g-6)/2]$. On the one hand, by~\cite[Equation~(3.13)]{DGZZ20} (in the proof of Theorem~3.4) we have
\begin{equation} \label{eq:simplify1}
\frac{(4g-4)! \cdot (6g-5-2k)!}{(6g-7)! \cdot (g-k)! \cdot (3g-3-k)!}
\sim
(6g-6)^{1/2} \frac{1}{\sqrt{\pi}} \frac{2^{8g-6-2k}}{3^{3g-4+k}}.
\end{equation}
On the other hand
\begin{equation} \label{eq:simplify2}
\frac{(6g-7)!}{(6g-7+p_1+\ldots+p_r+r)!}
\sim
\frac{1}{(6g-6)^{p_1 + \cdots + p_r + r}}.
\end{equation}
Replacing~\eqref{eq:simplify1} and~\eqref{eq:simplify2} in~\eqref{eq:asymp1} we obtain
\[
\begin{split}
M_p(\tilde{L}^{(g,m,\kappa)*})
& \sim
\frac{1}{2 \cdot (6g-6)^{p_1 + \cdots + p_r + r - 1/2}} \cdot \sqrt{\frac{m+1}{m}} \cdot \sqrt{\pi}  \\
& \qquad \cdot \sum_{k=1}^{\kappa \frac{\log(6g-6)}{2}}
\frac{1}{(k-r)!}
\sum_{\substack{(j_1,\dots,j_k)\in\NN^k \\ j_1 + \dots + j_k = 3g-3}}
\prod_{i=1}^k \frac{\zeta_m(2j_i)}{2j_i}
\prod_{i=1}^r \frac{(2j_i+p_i)!}{(2j_i-1)!}
\end{split}
\]
which is the announced formula.
\end{proof}
\subsection{Asymptotic expansion of a related sum}
Let $\theta = (\theta_i)_{i \geq 1}$ be a sequence of non-negative real numbers and
let $p = (p_1, \ldots, p_r)$ be a non-negative integral vector.
This section is dedicated to the asymptotics in $n$ of the numbers
\begin{equation} \label{eq:Sthetan}
S_{\theta,p,n}
\coloneqq
\sum_{k=r}^{\infty} \frac{1}{(k-r)!} \sum_{\substack{(j_1,\dots,j_k) \in \NN^k \\ j_1+\cdots +j_k = n}}
    \prod_{i=1}^k \frac{\theta_i}{2 j_i} \prod_{i=1}^r \frac{(2 j_i + p_i)!}{(2j_i - 1)!}.
\end{equation}
which should be reminiscent of the formula from Theorem~\ref{thm:LgMomentsAsymptotics}.

\begin{df}
Let $\theta = (\theta_j)_{j \geq 1}$ be non-negative real numbers and let
$g_\theta(z)$ be the formal series
\begin{equation} \label{eq:gtheta}
g_\theta(z) \coloneqq \sum_{j \geq 1} \theta_j \frac{z^j}{j}.
\end{equation}
We say that $\theta$ is \emph{admissible} if the function $g_\theta(z)$
\begin{itemize}
\item converges in the open disk $D(0,1)\subset\CC$ centered at $0$ of radius $1$,
\item $g_\theta(z) + \log(1-z)$ extends to a holomorphic function on $D(0,R)$
with $R > 1$.
\end{itemize}
\end{df}

\begin{thm} \label{thm:transfer}
Let $\theta = (\theta_k)_{k \geq 1}$ be admissible, then as $n \to \infty$ we have
\begin{equation} \label{eq:SthetaAsymptotics}
S_{\theta,p,n} \sim \sqrt{\frac{e^{\beta}}{2}} \cdot \frac{p_1! \cdots p_r!}{2^{r-1}} \frac{n^{p_1 + \cdots + p_r + r - 1/2}}{\varGamma(p_1 + \cdots + p_r + r + 1/2)}
\end{equation}
where $\beta$ is the value at $z=1$ of $g_\theta(z) + \log(1-z)$.
\end{thm}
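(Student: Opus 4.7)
The plan is to interpret $S_{\theta,p,n}$ as the $n$-th coefficient of an explicit generating function and extract its asymptotics by singularity analysis \`a la Flajolet--Odlyzko. The $k$-sum in the definition of $S_{\theta,p,n}$ has exactly the shape $\sum_{\ell \geq 0} u^\ell/\ell!$ with $\ell = k-r$, so it will exponentiate; and the $j$-sum subject to $j_1+\cdots+j_k = n$ is a convolution, i.e.\ coefficient extraction in a product. Singling out the first $r$ variables (which carry the extra factorial factors) one obtains
\[
F(z) \coloneqq \sum_{n \geq 0} S_{\theta,p,n}\, z^n
\;=\; \prod_{i=1}^r A_{p_i}(z) \cdot \exp\!\left(\tfrac{1}{2}\, g_\theta(z)\right),
\]
where $A_p(z) \coloneqq \sum_{j \geq 1} \theta_j\, \frac{(2j+p)!}{(2j)!}\, z^j = \sum_j \theta_j (2j+1)(2j+2)\cdots(2j+p)\,z^j$.

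Next I would identify the dominant singularity at $z=1$. Admissibility gives $g_\theta(z) = -\log(1-z) + \phi(z)$ with $\phi$ holomorphic on $D(0,R)$ and $\phi(1) = \beta$, so that $e^{g_\theta(z)/2} = (1-z)^{-1/2}\, e^{\phi(z)/2}$ has an algebraic singularity of order $1/2$ at $z=1$ with leading constant $e^{\beta/2}$. The rising factorial $(2j+1)\cdots(2j+p)$ is a polynomial in $j$ of degree $p$ with leading coefficient $2^p$; writing $\theta_j = 1 + (\theta_j-1)$, where the second piece is exponentially small and yields a function holomorphic on $D(0,R)$, one finds $A_p(z) \sim 2^p\, p!\,(1-z)^{-(p+1)}$, obtained from the standard $\sum_j j^p z^j \sim p!\,(1-z)^{-(p+1)}$. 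Multiplying,
\[
F(z) \;\sim\; e^{\beta/2}\,\prod_{i=1}^r \bigl(2^{p_i}\, p_i!\bigr) \cdot (1-z)^{-\alpha},
\qquad
\alpha \coloneqq p_1+\cdots+p_r+r+\tfrac12,
\]
as $z \to 1$.

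The last step is the Flajolet--Odlyzko transfer theorem, whose analyticity and growth hypotheses are precisely what admissibility provides: $F$ extends analytically to a $\Delta$-domain $\{|z|<R'\} \setminus \mathrm{cone}(1)$ for some $R'>1$, with $|F(z)| = O(|1-z|^{-\alpha})$ on this domain. The transfer lemma then gives $[z^n](1-z)^{-\alpha} \sim n^{\alpha-1}/\Gamma(\alpha)$, and hence
\[
S_{\theta,p,n} \;\sim\; e^{\beta/2}\,\prod_{i=1}^r\bigl(2^{p_i}\, p_i!\bigr) \cdot \frac{n^{\alpha-1}}{\Gamma(\alpha)},
\]
which after rearranging the various powers of $2$ matches \eqref{eq:SthetaAsymptotics}.

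The main obstacle is the technical verification of $\Delta$-analyticity and uniform polynomial growth for $F$. Admissibility analytically continues $g_\theta(z) + \log(1-z)$ across $|z|=1$, which handles the exponential factor, and each $A_p(z)$ extends meromorphically to $D(0,R)$ with a single pole of order $p+1$ at $z=1$; but the subleading contributions to $A_p$ (both those coming from non-leading powers of $j$ in the expansion of $(2j+1)\cdots(2j+p)$ and those coming from the holomorphic tail $\sum_j (\theta_j-1)j^s z^j$) must be shown to produce only strictly weaker singularities, so that they are absorbed into the transfer error term. Once this analytic bookkeeping is done, the extraction of the exponent $\alpha-1$ and of the leading constant is mechanical.
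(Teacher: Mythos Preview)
Your approach is essentially the paper's: encode $S_{\theta,p,n}$ as a coefficient of a generating function built from $g_\theta$ and apply the Flajolet--Odlyzko transfer theorem. The only difference is that you work directly in the variable $z$ with a single dominant singularity at $z=1$, whereas the paper substitutes $z^2$ for $z$ (setting $G_{\theta,p}(z)=\exp\bigl(\tfrac12 g_\theta(z^2)\bigr)\prod_i D_{p_i}\bigl(\tfrac12 g_\theta(z^2)\bigr)$ with $D_p f = z\,\frac{d^{p+1}}{dz^{p+1}}(z^p f)$, so that $S_{\theta,p,n}=[z^{2n}]G_{\theta,p}$); this creates a second, symmetric singularity at $z=-1$ whose contribution must be added. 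Your $A_p$ plays exactly the role of the paper's $D_p(\tfrac12 g_\theta)$, and your version is marginally cleaner since only one singularity needs to be analysed.

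One caution on your final line: the constant you obtain, $e^{\beta/2}\,2^{\sum p_i}\prod_i p_i!$, is correct but does \emph{not} literally match \eqref{eq:SthetaAsymptotics} as displayed. The paper's own proof in fact produces $(2n)^{\alpha-1}$ rather than $n^{\alpha-1}$ in the last step, and this is how the result is actually used downstream (in the proof of Theorem~\ref{thm:GEM} one takes $n=3g-3$ and the exponent is attached to $6g-6=2n$). So the discrepancy is a typo in the statement, and your formula agrees with the intended one after the missing factor $2^{\alpha-1}$ is restored.
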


The following is essentially~\cite[Lemma~3.8]{DGZZ20} that we reproduce
for completeness.
\begin{lem}
For $m \in \NN \cup \{+\infty\}$, let
\[
g_m(z) \coloneqq \sum_{j \ge 1} \zeta_m(2j) \frac{z^j}{j}.
\]
Then $g_m(z)$ is summable in $D(0,1)$ and $g_m(z) + \log(1-z)$ extends
to a holomorphic function on $D(0,4)$.
In particular the sequence $(\zeta(2j))_{j \ge 1}$ is admissible.
Moreover
$(g_m(z) + \log(1-z))|_{z=1} = \log \left( \frac{2m}{m+1} \right)$.
\end{lem}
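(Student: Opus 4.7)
The proof plan is essentially a direct computation. The key observation is that swapping the order of summation in the definition of $g_m$ turns it into a sum of elementary logarithms, after which all three claims (convergence, holomorphic extension, value at $z=1$) fall out.

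First I would swap the order of summation. Writing $\zeta_m(2j) = \sum_{n=1}^m n^{-2j}$ and using the Taylor series $-\log(1-w) = \sum_{j\ge 1} w^j/j$ for $|w| < 1$, I obtain, for $|z| < 1$,
\[
g_m(z) = \sum_{n=1}^m \sum_{j \ge 1} \frac{1}{j}\left(\frac{z}{n^2}\right)^j = -\sum_{n=1}^m \log\!\left(1 - \frac{z}{n^2}\right).
\]
This immediately shows $g_m$ is summable on $D(0,1)$ since each summand is holomorphic there and, when $m=+\infty$, the general term is $O(1/n^2)$ uniformly on compacts. Pulling out the $n=1$ term,
\[
g_m(z) + \log(1-z) = -\sum_{n=2}^m \log\!\left(1 - \frac{z}{n^2}\right).
\]

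Next I would verify the holomorphic extension to $D(0,4)$. For any compact $K \subset D(0,4)$, there exists $R<4$ with $K\subset \overline{D(0,R)}$, and then for every $n\ge 2$ one has $|z/n^2|\le R/4 < 1$, so each $\log(1-z/n^2)$ is well-defined and holomorphic. In the case $m=+\infty$, uniform convergence on $K$ follows from the estimate $|\log(1-z/n^2)| \le C(R)/n^2$ for large $n$ (using $|\log(1-w)| \le 2|w|$ for $|w|\le 1/2$), so the sum defines a holomorphic function on $D(0,4)$. The nearest singularity of the extended function is at $z = 4$, coming from the $n=2$ term.

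Finally I would evaluate at $z=1$ by telescoping. Since each term $\log(1 - 1/n^2)$ factors as
\[
\log\!\left(\frac{(n-1)(n+1)}{n^2}\right) = \log(n-1) - \log n + \log(n+1) - \log n,
\]
the partial sum telescopes:
\[
\prod_{n=2}^m \frac{(n-1)(n+1)}{n^2} = \frac{1}{m} \cdot \frac{m+1}{2} = \frac{m+1}{2m}.
\]
Hence $(g_m(z) + \log(1-z))|_{z=1} = -\log\!\left(\tfrac{m+1}{2m}\right) = \log\!\left(\tfrac{2m}{m+1}\right)$, with the case $m=+\infty$ giving $\log 2$ as a limit. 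Since $(\zeta(2j))_{j\ge 1}$ is the special case $m=+\infty$, admissibility follows from the convergence and the holomorphic extension statements just established. There is no real obstacle here; the only mild point of care is justifying the interchange of summation and the uniform convergence on compacts of $D(0,4)$ in the $m=+\infty$ case, both handled by the $1/n^2$ decay.
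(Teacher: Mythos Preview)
Your proof is correct and follows essentially the same approach as the paper: swap summation to rewrite $g_m(z)=-\sum_{n=1}^m\log(1-z/n^2)$, isolate the $n=1$ term, bound the tail by $O(1/n^2)$ on compacts of $D(0,4)$, and telescope at $z=1$. The only cosmetic differences are that the paper first argues convergence on $D(0,1)$ from the uniform boundedness of $\zeta_m(2j)$ before rewriting, and it uses the slightly different uniform bound $|\log(1-z/n^2)|\le \tfrac{4}{n^2}|\log(1-z/4)|$ on $D(0,4)$.
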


\begin{proof}
Since $\zeta_m(2j)$ is bounded uniformly in $j$, the series converges in $D(0,1)$.
Now, expanding the definition of the partial zeta function $\zeta_m$ and changing
the order of summation we have for $z \in D(0,1)$
\[
g_m(z) = - \sum_{n=1}^m \log \left(1 - \frac{z}{n^2} \right)
\]
and hence
\[
g_m(z) + \log(1-z) = - \sum_{n=2}^m \log \left(1 - \frac{z}{n^2} \right).
\]
The term $\log \left(1 - \frac{z}{n^2} \right)$ defines a holomorphic function
on $D(0,n^2)$. Since
$\left| \log \left(1 - \frac{z}{n^2} \right) \right| \le
\frac{4}{n^2}
\left| \log \left(1 - \frac{z}{4} \right) \right|
$
we have absolute convergence even for $m=+\infty$ and $g_m(z) + \log(1-z)$
defines a holomorphic function in $D(0,4)$.

Now for the value at $z=1$ we obtain
\begin{align*}
(g_m(z) + \log(1-z))|_{z=1} &=
- \sum_{n=2}^m \log \left(1 - \frac{1}{n^2}\right) \\
&= \sum_{n=2}^m \left(2 \log(n) - \log(n-1) + \log(n+1 ) \right) \\
&= \log \left( \frac{2m}{m+1} \right).
\end{align*}
This completes the proof.
\end{proof}

\begin{cor} \label{cor:zetamAsymptotics}
Let $m \in \NN \cup \{+\infty\}$.
For $\theta = (\zeta_m(2i))_{i \geq 1}$ we have
\[
S_{\theta,p,n} \sim
\sqrt{\frac{m}{m+1}}
\cdot
\frac{p_1! \cdots p_r!}{2^{r-1}} \frac{n^{p_1 + \cdots + p_r + r - 1/2}}{\varGamma(p_1 + \cdots + p_r + r + 1/2)}.
\]
\end{cor}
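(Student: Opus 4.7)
The plan is to apply Theorem~\ref{thm:transfer} directly with $\theta = (\zeta_m(2i))_{i \geq 1}$. The hypothesis needed is admissibility of this sequence together with the value $\beta$ of $g_\theta(z) + \log(1-z)$ at $z=1$, and both of these have already been supplied by the preceding lemma.

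More concretely, the preceding lemma identifies $g_\theta$ in this case with
\[
g_m(z) = \sum_{j \geq 1} \zeta_m(2j) \frac{z^j}{j},
\]
shows that $g_m(z) + \log(1-z)$ extends holomorphically to $D(0,4)$, and computes its value at $z=1$ to be $\beta = \log\!\bigl(\tfrac{2m}{m+1}\bigr)$. In particular the sequence is admissible (with $R = 4 > 1$), so Theorem~\ref{thm:transfer} applies and yields
\[
S_{\theta,p,n} \sim \sqrt{\frac{e^{\beta}}{2}} \cdot \frac{p_1! \cdots p_r!}{2^{r-1}} \cdot \frac{n^{p_1 + \cdots + p_r + r - 1/2}}{\varGamma(p_1 + \cdots + p_r + r + 1/2)}.
\]

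To finish, I simplify the prefactor using the explicit value of $\beta$: since $e^\beta = \tfrac{2m}{m+1}$, we get $\sqrt{e^\beta/2} = \sqrt{m/(m+1)}$, which matches the claimed constant. No step is really an obstacle here; the corollary is a straightforward specialization of Theorem~\ref{thm:transfer} combined with the explicit admissibility computation done in the lemma just above, so the proof will amount to one sentence invoking the theorem and one sentence evaluating the constant.
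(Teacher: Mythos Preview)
Your proposal is correct and matches the paper's approach exactly: the corollary is an immediate specialization of Theorem~\ref{thm:transfer} using the admissibility and the value $\beta = \log\bigl(\tfrac{2m}{m+1}\bigr)$ established in the preceding lemma, together with the simplification $\sqrt{e^\beta/2} = \sqrt{m/(m+1)}$. The paper does not even write out a proof for this corollary, treating it as evident from those two ingredients.
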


For a non-negative integer $p$ we define the differential operator on $\CC[[z]]$
by
\[
D_p(f) \coloneqq z \frac{d^{p+1}}{dz^{p+1}} (z^p f).
\]

We start with some preliminary lemmas.
\begin{lem} \label{lem:coeffExtraction}
Let $\theta = (\theta_i)_i$ and $g_\theta(z)$ as in Theorem~\ref{thm:transfer}.
Let $p = (p_1, \ldots, p_r)$ be a tuple of non-negative integers and let
\begin{equation} \label{eq:Gthetap}
G_{\theta,p}(z) \coloneqq \exp\left(\frac{1}{2} g_\theta(z^2)\right) \prod_{i=1}^r D_{p_i}\left( \frac{1}{2} g_\theta(z^2) \right).
\end{equation}
Then, for any $n \geq 0$ we have $[z^{2n}] \, G_{\theta,p}(z) = S_{\theta,p,n}$ where $[z^{2n}]$ is the coefficient extraction operator and $S_{\theta,p,n}$ is the sum in~\eqref{eq:Sthetan}
\end{lem}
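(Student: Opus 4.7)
The plan is to prove the identity by direct formal power series manipulation, comparing coefficients term by term. Everything in sight is defined as a formal series, so admissibility is irrelevant here; the lemma is a purely combinatorial identity that relates a coefficient of a product/exponential to the sum $S_{\theta,p,n}$.

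First I would record the fundamental building block: by definition of $g_\theta$,
\[
\frac{1}{2} g_\theta(z^2) = \sum_{j \geq 1} \frac{\theta_j}{2j}\, z^{2j},
\]
so the coefficient of $z^{2j}$ is exactly $\theta_j/(2j)$. This is the atom that carries the factor $\theta_{j_i}/(2j_i)$ appearing in $S_{\theta,p,n}$, and it is the reason one expects a generating-function interpretation.

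Next I would compute the action of $D_p$ on the monomial $z^{2j}$: since $\frac{d^{p+1}}{dz^{p+1}}\,z^{p+2j} = \frac{(p+2j)!}{(2j-1)!}\, z^{2j-1}$, multiplying by $z$ yields $D_p(z^{2j}) = \frac{(p+2j)!}{(2j-1)!}\, z^{2j}$. Applying this termwise,
\[
D_{p_i}\!\left(\tfrac{1}{2} g_\theta(z^2)\right) = \sum_{j \geq 1} \frac{\theta_j}{2j}\cdot\frac{(2j+p_i)!}{(2j-1)!}\, z^{2j},
\]
which is exactly the factor associated to the ``marked'' indices $j_1,\dots,j_r$ in $S_{\theta,p,n}$. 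This is the key reason $D_{p_i}$ was introduced with that particular form: it converts a factor $\theta_j/(2j)$ into $\frac{\theta_j}{2j}\cdot\frac{(2j+p_i)!}{(2j-1)!}$ without shifting the exponent of $z$.

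Finally I would expand the exponential as $\exp\!\bigl(\tfrac{1}{2} g_\theta(z^2)\bigr) = \sum_{s\geq 0} \frac{1}{s!}\bigl(\tfrac{1}{2} g_\theta(z^2)\bigr)^s$, so that each power $(\tfrac{1}{2} g_\theta(z^2))^s$ produces the sum over unordered $s$-tuples of ``unmarked'' indices with product of factors $\theta_{j_i}/(2j_i)$. Multiplying by the $r$ derivative factors from the previous step and reindexing $k = r + s$, the monomial $z^{2n}$ collects contributions from every tuple $(j_1,\dots,j_k)$ with $j_1 + \dots + j_k = n$, weighted by $\frac{1}{(k-r)!}$ (from the $1/s!$ in the exponential), the $r$ marked factors $\frac{\theta_{j_i}}{2j_i}\cdot\frac{(2j_i+p_i)!}{(2j_i-1)!}$, and the $k-r$ unmarked factors $\frac{\theta_{j_i}}{2j_i}$. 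Recognizing this as precisely the definition of $S_{\theta,p,n}$ concludes the proof. There is no real obstacle here; the only thing to be careful about is bookkeeping the identification $k = r+s$ and the fact that the $r$ ``marked'' indices are distinguished (hence a factor $1/s!$ but not $1/k!$).
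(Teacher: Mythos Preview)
Your proposal is correct and follows essentially the same approach as the paper: compute the building block $\tfrac{1}{2}g_\theta(z^2)$, verify $D_p(z^{2j}) = \tfrac{(2j+p)!}{(2j-1)!}z^{2j}$, and then expand the exponential. The paper's proof is in fact terser than yours, dispatching the final step with ``the lemma follows by expanding the exponential,'' whereas you spell out the reindexing $k=r+s$ and the bookkeeping of marked versus unmarked indices.
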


\begin{proof}
Let us first note that $\frac{1}{2} g_\theta(z^2) = \sum_i \theta_i \frac{z^{2i}}{2i}$. We aim
to compute the expansion of $D_p \left( \frac{1}{2} g_\theta(z^2) \right)$. By linearity, it is
enough to compute a single term and we have
\[
D_p \left( z^{2j} \right) = z \frac{d^{p+1}}{dz^{p+1}} (z^{2j+p}) = \frac{(2j + p)!}{(2j-1)!} z^{2j}.
\]
Hence
\[
D_p \left( \frac{1}{2} g_\theta(z^2) \right)
=
\sum_{j=1}^{\infty} \frac{(2j+p)!}{(2j-1)!} \theta_i \frac{z^{2j}}{2j}.
\]
The lemma follows by expanding the exponential.
\end{proof}

\begin{lem} \label{lem:DpOfLog}
For any $p \geq -1$ we have
\[
\frac{1}{p!} D_p \left( - \log(1 \pm z) \right)
= \frac{1}{(1 \pm z)^{p+1}} - 1.
\]
\end{lem}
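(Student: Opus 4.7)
The plan is to verify the identity termwise on the power series expansion of $-\log(1 \pm z)$ and then recognize the result as a binomial series.

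First I would compute the action of $D_p$ on a single monomial. From the definition $D_p(f) = z\,\frac{d^{p+1}}{dz^{p+1}}(z^p f)$, applied to $f = z^j$ we find $D_p(z^j) = z\,\frac{d^{p+1}}{dz^{p+1}} z^{j+p}$, which equals $\frac{(j+p)!}{(j-1)!}\, z^{j}$ for $j \geq 1$ and vanishes for $j = 0$ since $z^p$ is killed after $p+1$ differentiations.

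Next, writing $-\log(1\pm z) = \sum_{j \geq 1}(\mp z)^j/j$ (from $-\log(1-w) = \sum_{j\geq 1} w^j/j$ with $w = \mp z$) and applying $D_p$ termwise, which is valid inside the unit disk since $D_p$ is a polynomial differential operator, I obtain
\[
D_p\bigl(-\log(1\pm z)\bigr) = \sum_{j \geq 1} \frac{(\mp 1)^j}{j} \cdot \frac{(j+p)!}{(j-1)!}\, z^j = p!\sum_{j \geq 1} \binom{j+p}{p}(\mp z)^j,
\]
after rewriting $\frac{(j+p)!}{j\,(j-1)!} = \frac{(j+p)!}{j!} = p!\binom{j+p}{p}$. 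The standard binomial identity $\sum_{j \geq 0} \binom{j+p}{p} w^j = (1-w)^{-(p+1)}$ with $w = \mp z$ then turns the right-hand side into $p!\bigl((1 \pm z)^{-(p+1)} - 1\bigr)$, and dividing by $p!$ yields the claim. The edge case $p = -1$ is trivially consistent: $D_{-1}$ is the identity operator and both sides vanish under the convention $1/(-1)! = 0$.

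The proof is essentially a direct computation and I do not expect any real obstacle; the only point meriting attention is the justification of termwise application of $D_p$, which is immediate since $D_p$ is a polynomial in $z$ and $d/dz$ and therefore commutes with absolute sums of power series inside their common disk of convergence.
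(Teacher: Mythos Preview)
Your proof is correct. The termwise computation $D_p(z^j) = \frac{(j+p)!}{(j-1)!}\,z^j$ is right (and in fact already appears in the paper's proof of Lemma~\ref{lem:coeffExtraction}), and the identification $\sum_{j\ge 0}\binom{j+p}{p}w^j = (1-w)^{-(p+1)}$ finishes it cleanly.

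The paper proceeds differently: it applies Leibniz's rule directly to the product $z^p\log\frac{1}{1-z}$, writing
\[
\frac{z}{p!}\frac{d^{p+1}}{dz^{p+1}}\Bigl(z^p\log\tfrac{1}{1-z}\Bigr)
= \frac{z}{p!}\sum_{i=0}^{p}\binom{p+1}{i}\,p(p-1)\cdots(p-i+1)\,z^{p-i}\,\frac{(p-i)!}{(1-z)^{p+1-i}},
\]
and then recognizes the resulting finite sum as $\bigl(1+\frac{z}{1-z}\bigr)^{p+1}-1$ via the binomial theorem. So the paper stays in closed form throughout and uses a \emph{finite} binomial identity, whereas you pass through the Taylor expansion and invoke the \emph{infinite} negative-binomial series. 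Your route has the minor advantage of reusing the monomial computation already needed for Lemma~\ref{lem:coeffExtraction}, and it handles the $\pm$ sign uniformly without a separate sentence; the paper's route avoids any appeal to termwise differentiation of an infinite series. Both are short and equally valid.
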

\begin{proof}
By Leibniz's rule,
\begin{align*}
\frac{z}{p!} \frac{d^{p+1}}{dz^{p+1}}\left(z^p \log\frac{1}{1-z}\right)
&= \frac{z}{p!} \sum_{i=0}^{p} \binom{p+1}{i} p(p-1)\cdots (p-i+1) z^{p-i} \frac{(p-i)!}{(1-z)^{p+1-i}} \\
&= -1 +\sum_{i=0}^{p+1} \binom{p+1}{i} \left(\frac{z}{1-z}\right)^{p-i+1} \\
&= -1 + \left(1 + \frac{z}{1-z}\right)^{p+1} \\
&= -1 + \frac{1}{(1-z)^{p+1}}.
\end{align*}
The proof for $- \log(1 + z)$ is similar.
\end{proof}

\begin{proof}[Proof of Theorem~\ref{thm:transfer}]
By Lemma~\ref{lem:coeffExtraction}, the sum $S_{\theta,p,n}$ is the coefficient in front of $z^{2n}$ of $G_{\theta,p}$.
By the conditions in the statement, $g_\theta(z^2) = - \log(1-z^2) + \beta + r_\theta(z)$ where $r_\theta(z)$ is holomorphic on $D(0,\sqrt{R})$ and $r_\theta(1) = 0$.
Using Lemma~\ref{lem:DpOfLog}, we deduce that for any $p \geq 0$ we have
\[
D_p \left( g_\theta(z^2) \right)
=
\frac{p!}{(1-z)^{p+1}} + \frac{p!}{(1+z)^{p+1}} + r_{\theta,p}(z)
\]
where $r_{\theta,p}$ is holomorphic in $D(0,\sqrt{R})$.
We deduce that $G_{\theta,p}(z)$ is meromorphic in $D(0,\sqrt{R}) \setminus [1,\sqrt{R})$
and satisfies as $z \to 1$
\begin{align*}
    G_{\theta,p}(z) &= \exp\left(- \frac{1}{2} \left(\log(1-z) + \log(2) - \beta \right) + O(1-z) \right)
\prod_{i=1}^r \frac{p!}{2} \left(\frac{1}{(1-z)^{p_i+1}} + O(1) \right) \\
&=
\sqrt{\frac{e^\beta}{2}} \cdot \frac{p_1!\cdots p_r!}{2^r} \frac{1}{(1-z)^{p_1+\cdots+p_r+r+1/2}} \, (1 + o(1)).
\end{align*}
Similarly, as $z \to -1$ we have
\[
G_{\theta,p}(z)
=
\sqrt{\frac{e^\beta}{2}} \cdot \frac{p_1!\cdots p_r!}{2^r} \frac{1}{(1+z)^{p_1+\cdots+p_r+r+1/2}} (1 + o(1)).
\]
Now using~\cite[Theorem VI.5]{FS09}, we obtain
\[
[z^{2n}] \, G_{\theta,p}(z)
\sim 2 \cdot
\sqrt{\frac{e^\beta}{2}} \cdot \frac{p_1! \cdots p_r!}{2^r} \cdot \frac{(2n)^{p_1 + \cdots + p_r + r - 1/2}}{\varGamma(p_1+\cdots+p_r+r+1/2)}.
\]
This completes the proof.
\end{proof}

\subsection{Truncation estimates}
Recall that Theorem~\ref{thm:LgMomentsAsymptotics} provided an expression for the moment $M_p(\tilde{L}^{(g,\kappa)*})$
which involves a sum which is a truncated version of $S_{\theta,p,n}$ from~\eqref{eq:Sthetan}. In this
section, we show that the difference between $S_{\theta,p,n}$ and its truncation is negligible
compared to the asymptotics of Theorem~\ref{thm:transfer}.

\begin{thm} \label{thm:remainder}
Let $\theta$ and $g_\theta(z)$ be as in Theorem~\ref{thm:transfer}.
Then for any real $\kappa > 1$ we have as $n \to \infty$
\begin{equation} \label{eq:sumVSPartialSum}
S_{\theta,p,n} \sim
\sum_{k=r}^{\kappa \frac{\log(2n)}{2}} \frac{1}{(k-r)!} \sum_{\substack{(j_1,\dots,j_k) \in \NN^k \\ j_1+\cdots +j_k = n}}
    \prod_{i=1}^k \frac{\theta_i}{2 j_i} \prod_{i=1}^r \frac{(2 j_i + p_i)!}{(2j_i - 1)!}.
\end{equation}
\end{thm}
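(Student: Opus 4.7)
The plan is to reinterpret $S_{\theta,p,n}$ via the generating function from Lemma~\ref{lem:coeffExtraction} and exploit the Poisson concentration of the truncated exponential in the critical region near the singularities $z=\pm 1$. Write $S_{\theta,p,n} = [z^{2n}] G_{\theta,p}(z)$ with $G_{\theta,p} = e^{f} \prod_{i=1}^r D_{p_i}(f)$ and $f(z) = \tfrac{1}{2} g_\theta(z^2)$. The truncated sum in~\eqref{eq:sumVSPartialSum} equals $[z^{2n}] G^{(K)}_{\theta,p}(z)$ where the exponential $e^{f}$ is replaced by its Taylor polynomial of degree $K-r$, with $K = \lfloor \kappa \log(2n)/2\rfloor$. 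Since $f(z)\sim -\tfrac{1}{2}\log(1-z^2)$ near $z=\pm 1$, the critical region for coefficient extraction is $|z\mp 1|\sim 1/n$, where $f(z)\sim \tfrac{1}{2}\log(2n)$; the hypothesis $\kappa>1$ ensures $K-r$ strictly exceeds this value, so the truncation should lose only a Poisson-tail fraction.

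The first step is a uniform upper bound on each summand. Let
\[
S_{\theta,p,n}(k) := \frac{1}{(k-r)!}\,[z^{2n}]\, f(z)^{k-r}\prod_{i=1}^r D_{p_i}(f(z)).
\]
By Lemma~\ref{lem:DpOfLog} each $D_{p_i}(f)$ has a pole of order $p_i+1$ at $z=\pm 1$, while $f$ has a logarithmic singularity there. A uniform version of the Flajolet--Sedgewick transfer theorem \cite[Thm.~VI.11]{FS09} should give, uniformly for $r\leq k\leq C_0\log n$,
\[
S_{\theta,p,n}(k)\;\leq\; C\cdot n^{p_1+\cdots+p_r+r-1}\cdot\frac{(\log(2n))^{k-r}}{(k-r)!\,2^{k-r}}\,(1+o(1)).
\]
For $k>C_0\log n$, the direct Cauchy bound on the circle $|z|=1-1/(2n)$ already yields super-polynomial decay, so this range contributes negligibly.

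Summing the per-$k$ bound over $k>K$ gives
\[
\sum_{k>K} S_{\theta,p,n}(k)\;\leq\; C\, n^{p_1+\cdots+p_r+r-1}\sum_{j>K-r}\frac{(\log(2n)/2)^j}{j!}.
\]
The right-hand sum is the upper tail of a $\Poisson(\log(2n)/2)$ distribution evaluated at $K-r\approx \kappa\,\log(2n)/2$. Since $\kappa>1$, the Cramér--Chernoff inequality gives $\sum_{j>K-r}\lambda^j/j! \leq (2n)^{(1-I(\kappa))/2}$ with $I(\kappa) := \kappa\log\kappa - \kappa + 1 > 0$. Combining, the total tail contribution is $O(n^{p_1+\cdots+p_r+r-1/2 - I(\kappa)/2})$; dividing by the asymptotic $S_{\theta,p,n}\sim C\,n^{p_1+\cdots+p_r+r-1/2}$ from Theorem~\ref{thm:transfer} shows the relative remainder is $O(n^{-I(\kappa)/2}) = o(1)$, as desired.

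The main obstacle lies in the uniform per-$k$ bound. The classical transfer theorems of \cite{FS09} are formulated for fixed exponents on the log-power factor; extracting uniformity for $k$ up to $O(\log n)$ requires a careful Hankel-contour argument at scale $1/n$ around each of $z=\pm 1$. The admissibility hypothesis---that $g_\theta(z)+\log(1-z)$ extends holomorphically to $D(0,R)$ with $R>1$---is used to control the deviation of $f(z)$ from its logarithmic leading part by a term that is geometrically small in $k$, guaranteeing that the implicit constants in the per-$k$ estimate do not blow up as $k$ grows.
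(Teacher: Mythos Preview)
Your approach is correct in spirit and uses the same three ingredients as the paper: the generating-function identity of Lemma~\ref{lem:coeffExtraction}, a Hankel contour at scale $1/n$ around $z=\pm 1$, and the Poisson (Cram\'er--Chernoff) tail bound for $\sum_{j>\kappa\lambda}\lambda^j/j!$ with $\lambda=\tfrac12\log(2n)$. The final exponent you obtain, $p_1+\cdots+p_r+r-1/2-I(\kappa)/2$ with $I(\kappa)=\kappa\log\kappa-\kappa+1$, matches the paper's.

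The organization, however, differs in a way that matters. You propose to bound each summand $S_{\theta,p,n}(k)$ separately via a transfer theorem that is uniform in the log-power $k-r$, and you correctly flag this uniformity as the main technical obstacle. The paper avoids this entirely by reversing the order of operations: it first sums over $k$ to form the single tail function
\[
G_{n,\theta,p}(z)=\Bigl(\sum_{k\ge K}\frac{f(z)^{k-r}}{(k-r)!}\Bigr)\prod_{i=1}^r D_{p_i}(f(z)),
\]
and then runs one Cauchy--Hankel estimate on this $n$-dependent function (Lemma~\ref{lem:partialSumUpperBound}). On the small arcs $|z\mp 1|=1/n$ one has $|f(z)|=\tfrac12\log n+O(1)$, so the Poisson tail bound (your Cram\'er--Chernoff inequality, stated in the paper as Lemma~\ref{lem:DGZZ20.4.4}) is applied \emph{pointwise on the contour} to the bracketed sum, rather than after coefficient extraction. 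This buys exactly the uniformity in $k$ that you would otherwise have to establish by hand, and collapses your two-step argument into a single contour integral. Your route would certainly work once the uniform transfer is proved, but the paper's ordering makes that step unnecessary.
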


Bounding the coefficient in a Taylor expansion is a standard tool in asymptotic
analysis as the ``Big-Oh transfer''~\cite[Theorem VI.3]{FS09}. However, in our
situation we need to bound the $n$-th Taylor coefficient of a function $f_n$
that depends on $n$. To do so, we track down the dependencies on the
functions inside the transfer theorem.

\begin{lem}[{\cite[Lemma 4.4]{DGZZ20}}]\label{lem:DGZZ20.4.4}
Let $\lambda$ and $x$ be positive real numbers. We have,
\[
    \sum_{k=\lceil x\lambda \rceil}^{\infty} \frac{\lambda^k}{k!}
    \leq \exp(-\lambda(x\log x -x)).
\]
\end{lem}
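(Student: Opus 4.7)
The inequality is a Chernoff-type tail bound for a Poisson$(\lambda)$ variable: writing $Y\sim\Poisson(\lambda)$ one has $\sum_{k\geq K}\lambda^k/k!=e^{\lambda}\,\PP(Y\geq K)$, so the claim is equivalent to the standard large-deviation estimate $\PP(Y\geq\lceil x\lambda\rceil)\leq\exp(-\lambda(x\log x-x+1))$. I plan to prove it by the exponential moment method with the canonical Poisson tilt.

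The main step is: for any real $t\geq 0$ and any integer $K\geq 0$, the termwise inequality $\mathbf{1}_{k\geq K}\leq e^{t(k-K)}$ yields, after summing over $k$ and recognising the exponential series,
\[
\sum_{k=K}^{\infty}\frac{\lambda^k}{k!}\leq e^{-tK}\sum_{k=0}^{\infty}\frac{(\lambda e^t)^k}{k!}=e^{-tK}\exp(\lambda e^t).
\]
I would then specialize to $K=\lceil x\lambda\rceil$ and $t=\log x$, the classical tilt which is non-negative in the regime $x\geq 1$ (the only case in which the asserted bound is non-trivial). With this choice $e^t=x$, so the right-hand side simplifies to $x^{-K}\exp(\lambda x)$.

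Finally, $K=\lceil x\lambda\rceil\geq x\lambda$ combined with $\log x\geq 0$ gives $x^{-K}\leq x^{-x\lambda}=\exp(-x\lambda\log x)$, and multiplying the two exponentials yields exactly $\exp(-\lambda(x\log x-x))$, as required. No genuine obstacle arises: the argument is a one-line Chernoff bound, and the only point to keep track of is that the ceiling in $\lceil x\lambda\rceil$ may be replaced by $x\lambda$ in the final exponent without loss, because $x\geq 1$ makes the map $K\mapsto x^{-K}$ non-increasing.
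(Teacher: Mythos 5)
Your exponential-tilting (Chernoff) argument is correct and complete in the regime $x\ge 1$: the termwise bound $\mathbf{1}_{k\ge K}\le e^{t(k-K)}$ for $t\ge 0$ gives $\sum_{k\ge K}\lambda^k/k!\le e^{-tK}e^{\lambda e^t}$, and the choice $t=\log x$, $K=\lceil x\lambda\rceil\ge x\lambda$ then yields exactly $\exp(-\lambda(x\log x-x))$. There is no in-paper argument to compare against: the lemma is quoted from~\cite[Lemma~4.4]{DGZZ20} without proof, so a self-contained derivation like yours is welcome, and this is the standard route.

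One point needs correcting, though. Your parenthetical claim that $x\ge 1$ is ``the only case in which the asserted bound is non-trivial'' is not accurate: for $0<x<1$ the stated inequality is not trivially true — it is false in general. For instance, with $\lambda=10$ and $x=1/2$ the left-hand side is $\sum_{k\ge 5}10^k/k!\approx 2.1\cdot 10^4$, while the right-hand side is $\exp\bigl(10\,(\tfrac12+\tfrac12\log 2)\bigr)\approx 4.8\cdot 10^3$. So the hypothesis $x\ge 1$ is genuinely needed (the statement as reproduced for all positive $x$ inherits this imprecision from the citation). This is harmless for the paper, since the lemma is only ever invoked with $x=\kappa>1$ in the truncation estimates, but your write-up should state explicitly that the proof covers $x\ge 1$, which is the only case used, rather than dismissing $x<1$ as trivial.
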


\begin{lem} \label{lem:partialSumUpperBound}
Let $g(z)$ be a holomorphic function on $D(0,R) \setminus ([1,R) \cup [-1,-R))$ such that, as $z \to \pm 1$ we have
\[
h(z) = - \frac{1}{2} \log(1-z^2) + O(1).
\]
Fix a real $\kappa > 1$, and non-negative integers $p$ and $q$.
For $n \geq 1$ let
\[
f_n(z)
= \frac{1}{(1-z)^p (1+z)^q} \sum_{k = \lfloor \kappa \frac{\log n}{2} \rfloor}^{\infty} \frac{h(z)^k}{k!}.
\]
Then, we have as $n \to \infty$
\[
[z^n] \, f_n(z)
= O\left( n^{\max\{p,q\}-1- \frac{1}{2} (\kappa\log\kappa - \kappa)} \right).
\]
\end{lem}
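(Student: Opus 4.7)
The plan is to combine Cauchy's integral formula with a Hankel-type contour around $z = \pm 1$ and the tail bound of Lemma~\ref{lem:DGZZ20.4.4}. Writing $[z^n] f_n(z) = (2\pi i)^{-1} \oint_\Gamma z^{-n-1} f_n(z)\, dz$, I deform $\Gamma$ inside the region of holomorphy $D(0,R) \setminus ([1,R) \cup [-1,-R))$ to the union of two Hankel loops $\mathcal{H}_+$ and $\mathcal{H}_-$ approaching $z=1$ and $z=-1$ respectively at distance $1/n$, closed by arcs of the circle of radius $1+\epsilon$ for some fixed $\epsilon \in (0, R-1)$. On those outer arcs $|z^{-n-1}|$ decays like $(1+\epsilon)^{-n}$, so their contribution is exponentially small in $n$ and can be discarded.

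On $\Gamma$ the hypothesis $h(z) = -\frac{1}{2}\log(1-z^2) + O(1)$ near $\pm 1$ together with the boundedness of $h$ on the compact portion away from $\pm 1$ yields $|h(z)| \le \lambda_n \coloneqq \tfrac{1}{2}\log n + C$ uniformly, for some constant $C$ depending only on $h$. Setting $K \coloneqq \lfloor \kappa \log n / 2 \rfloor$, one has $K/\lambda_n \to \kappa$ as $n \to \infty$, and applying Lemma~\ref{lem:DGZZ20.4.4} with parameter $x_n \coloneqq K/\lambda_n$ gives
\[
\sum_{k \ge K} \frac{|h(z)|^k}{k!} \le \exp\!\bigl( -\lambda_n (x_n \log x_n - x_n) \bigr) = O\!\bigl( n^{-\frac{1}{2}(\kappa \log \kappa - \kappa)} \bigr)
\]
uniformly in $z \in \Gamma$; the error incurred in passing from $x_n$ to $\kappa$ is $O(1)$ in the exponent since a first-order expansion of $x \log x - x$ at $x = \kappa$ gives $\lambda_n |x_n - \kappa| = O(1)$.

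Finally, on $\mathcal{H}_+$ I use the Hankel substitution $z = 1 - \zeta / n$, under which $|dz| = |d\zeta|/n$, $|z^{-n-1}| = O(e^{\mathrm{Re}\,\zeta})$, $|(1-z)^{-p}| = n^p |\zeta|^{-p}$, and $|(1+z)^{-q}| = O(1)$. Combining with the tail estimate above yields
\[
\left| \int_{\mathcal{H}_+} \frac{f_n(z)}{z^{n+1}}\, dz \right| \le \frac{n^p}{n} \cdot n^{-\frac{1}{2}(\kappa \log \kappa - \kappa)} \int_{\mathcal{H}} \frac{e^{\mathrm{Re}\,\zeta}}{|\zeta|^p}\, |d\zeta| = O\!\bigl( n^{p-1-\frac{1}{2}(\kappa \log \kappa - \kappa)} \bigr),
\]
the $\zeta$-integral being a fixed convergent Hankel integral. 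By symmetry the contribution of $\mathcal{H}_-$ is $O(n^{q-1-\frac{1}{2}(\kappa \log \kappa - \kappa)})$, and summing the two gives the announced bound. The main obstacle is uniformity of all estimates in $n$: one must check that the contour, the bound $|h| \le \lambda_n$, and the passage $x_n \to \kappa$ in Lemma~\ref{lem:DGZZ20.4.4} are compatible and do not leak any spurious factor into the exponent, after which everything reduces to routine singularity analysis.
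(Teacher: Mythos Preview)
Your proposal is correct and follows essentially the same route as the paper: Cauchy's formula on a contour built from Hankel loops of radius $1/n$ around $z=\pm 1$ joined by outer arcs of radius $1+\epsilon$, the uniform bound $|h|\le \tfrac12\log n + O(1)$ on that contour fed into Lemma~\ref{lem:DGZZ20.4.4} to control the tail sum, and the rescaling $z=1-\zeta/n$ to extract the $n^{p-1}$ (respectively $n^{q-1}$) from each loop. The paper carries out exactly this analysis, only with the Hankel loop split explicitly into a small circular arc and two straight segments rather than treated as a single piece.
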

\begin{proof}
    Let $0 < \eta < R-1$ and $0 < \phi < \pi/2$ and define the contour $\gamma$ as the union $\sigma_+ \cup \sigma_- \cup \lambda_\nearrow \cup \lambda_\nwarrow \cup \lambda_\searrow \cup \lambda_\swarrow \cup \lambda_\nearrow \cup \Sigma_+ \cup \Sigma_-$ with
    \begin{align*}
        \sigma_+ & = \{z : |z-1| = 1/n, \ |\arg(z-1)|\geq \phi\}, \\
        \sigma_- & = \{z : |z+1| = 1/n, \ |\arg(z-1)|\geq \phi\}, \\
        \lambda_\nearrow & = \{z : |z-1|\geq 1/n,\ |z| \leq 1 + \eta, \arg(z-1) = \phi\}, \\
        \lambda_\nwarrow & = \{z : |z-1|\geq 1/n,\ |z| \leq 1 + \eta, \arg(z-1) = -\phi\}, \\
        \lambda_\searrow & = \{z : |z+1|\geq 1/n,\ |z| \leq 1 + \eta, \arg(z-1) = \pi - \phi\}, \\
        \lambda_\swarrow & = \{z : |z+1|\geq 1/n,\ |z| \leq 1 + \eta, \arg(z-1) = -\pi + \phi\}, \\
        \Sigma_+ & = \{z : |z| = 1+\eta,\ \arg(z-1) \geq \phi,\ \arg(z+1) \leq \pi - \phi\}, \\
        \Sigma_- & = \{z : |z| = 1+\eta,\ \arg(z-1) \leq - \phi,\ \arg(z+1) \geq -\pi + \phi\}.
    \end{align*}
    See Figure~\ref{fig:contour} for a picture of $\gamma$.
    Sine $f_n$ is holomorphic on $D(0,R) \setminus ([1,R) \cup [-1,-R))$, we have the Cauchy's residue theorem for its coefficients
    \begin{equation} \label{eq:Cauchy}
        [z^n] \, f_n(z)
        =
        \frac{1}{2\pi i}\int_{\gamma} \frac{f_n(z)}{z^{n+1}} \, dz.
    \end{equation}

    \begin{figure}[ht]
        \centering \includegraphics{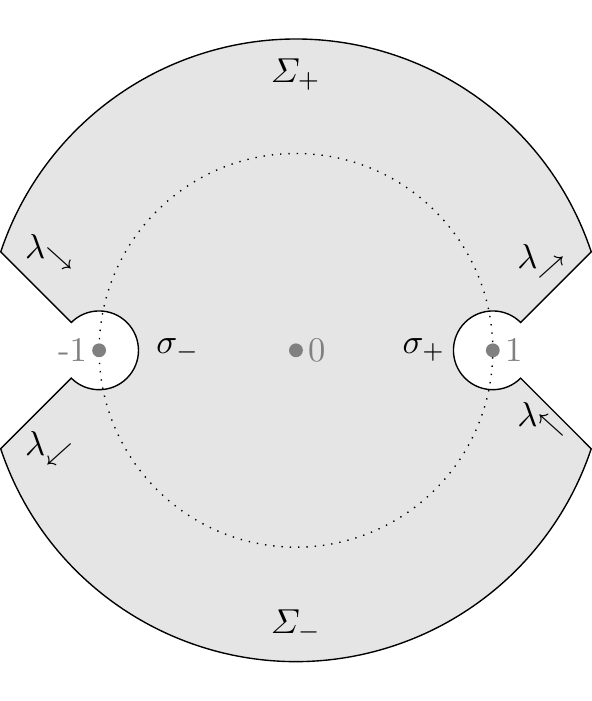}
        \caption{The contour $\gamma$.}
        \label{fig:contour}
    \end{figure}
    Taking absolute values in~\eqref{eq:Cauchy} we obtain
    \begin{equation} \label{eq:CauchyUpperBound}
        \left| [z^n] \, f_n(z) \right|
        \leq \frac{1}{2\pi} \int_\gamma \frac{|dz|}{|z|^{n+1}} \frac{1}{|1-z|^p |1+z|^q} \sum_{k=\lfloor \kappa \frac{\log n}{2} \rfloor}^{\infty} \frac{|h(z)|^k}{k!}.
    \end{equation}
    The proof proceeds by analyzing the right-hand side in~\eqref{eq:CauchyUpperBound} for each piece of the contour $\gamma$.

    Let us start with the small arc of the circle $\sigma_+$.
    The change of variables $z = 1-e^{i \theta}/n$ yields
    \[
        \left|\frac{1}{2\pi i} \int_{\sigma_+} \frac{f_n(z)dz}{z^{n+1}} \right|
        \leq
        \frac{n^{p-1}}{2\pi \cdot (R+1)^q} \int_{-\pi + \phi}^{\pi-\phi} \frac{d\theta}{|1-e^{i\theta}/n|^{n+1}} \sum_{k=\lfloor \kappa \frac{\log n}{2} \rfloor}^{\infty} \frac{|h(1-e^{i\theta}/n)|^k}{k!}.
    \]
    First $h(1-e^{i \theta}/n) = \frac{\log(n)}{2} + O(1)$ uniformly in $\theta$.
    Hence, by Lemma~\ref{lem:DGZZ20.4.4}, uniformly in $\theta$ as $n \to \infty$ we have
    \[
        \sum_{k=\lceil \kappa \frac{\log n}{2} \rceil}^{\infty} \frac{|h(z)|^k}{k!} \leq \exp \left(- (\kappa\log \kappa - \kappa) \cdot \frac{\log n + O(1)}{2} \right)
        = O \left( n^{-\frac{1}{2} (\kappa\log \kappa - \kappa)} \right).
    \]
    Since $\frac{1}{|1-e^{i\theta}/n|^{n+1}}$ is uniformly bounded in $n$,
    \[
        \left|\frac{1}{2\pi i} \int_{\sigma_+} \frac{f_n(z)\, dz}{z^{n+1}} \right|
        = O\left( n^{p-1-\frac{1}{2} (\kappa\log \kappa - \kappa)} \right).
    \]
    Similarly,
    \[
        \left|\frac{1}{2\pi i} \int_{\sigma_-} \frac{f_n(z)\, dz}{z^{n+1}} \right|
        = O\left( n^{q-1- \frac{1}{2} (\kappa\log \kappa - \kappa)} \right).
    \]

    Let us now consider the case of $\lambda_\nearrow$. Let $r$ be the positive solution of the equation $|1 + re^{i\phi}| = 1+\eta$. Perform the change of variable $z = 1+e^{i\phi} \cdot t/n$, we have
    \begin{align*}
        & \left| \frac{1}{2\pi i}\int_{\lambda_\nearrow} \frac{f(z)}{z^{n+1}}\, dz \right| \\
        & \qquad \leq \frac{n^{p-1}}{2\pi \cdot (R+1)^q} \int_1^{nr} dt \cdot t^{-p} \left| 1+ e^{i\phi}t/n \right|^{-n-1} \sum_{k=\lfloor \kappa \frac{\log n}{2} \rfloor}^{\infty} \frac{|h(1 + e^{i\phi} t/n)|^k}{k!}.
    \end{align*}
    For $n$ large enough and uniformly in $t$, $|h(1+e^{i\phi} t/n)| = \frac{\log(n)}{2} + O(1)$.
    Lemma~\ref{lem:DGZZ20.4.4} gives
    \[
    \sum_{k=\lfloor \kappa \frac{\log n}{2} \rfloor}^{\infty} \frac{|h(1+e^{i/\phi}t/n|^k}{k!}
    =
    O(n^{-\frac{1}{2} (\kappa\log\kappa - \kappa)}).
    \]
    From the boundedness of $|1+e^{i\phi} t/n|^{-n-1}$ it follows that
    \[
    \int_1^{nr} t^{-p} |1+e^{i\phi}\cdot t/n|^{-n-1}\, dt
    = O(1),
    \]
    and therefore
    \[
        \left| \frac{1}{2\pi i} \int_{\lambda_\nearrow} \frac{f(z)dz}{z^{n+1}} \right|
        =
        O\left( n^{-\frac{1}{2} (\kappa\log\kappa - \kappa)} \right).
    \]
    The same estimate is valid for the integral along the other three segments $\lambda_\nwarrow$, $\lambda_\searrow$, and $\lambda_\swarrow$.

    For the two large demi-circles $\Sigma_+$ and $\Sigma_-$, we have
    \[
        \left| \frac{1}{2\pi i} \int_{\Sigma_+} \frac{f(z) dz}{z^{n+1}} \right|
        \leq \frac{1}{2\pi} \cdot \eta^{\, p+1/2} \cdot (1+\eta)^{-n-1} \cdot 2\pi(1+\eta)
        = \frac{\eta^{p+1/2}}{(1+\eta)^{n+1}}
    \]
    which decreases exponentially fast.

    We conclude the proof by combining the above estimates.
\end{proof}

\begin{proof}[Proof of Theorem~\ref{thm:remainder}]
Similarly to Lemma~\ref{lem:coeffExtraction}, if we write
\[
G_{n,\theta,p}(z) \coloneqq \sum_{k = \lfloor \frac{\log(n)}{2} \rfloor}^\infty \frac{(\frac{1}{2} g_\theta(z^2))^k}{k!} \prod_{i=1}^r D_{p_i}\left( \frac{1}{2} g_\theta(z^2) \right).
\]
then $[z^{2n}] \, G_{n,\theta,p}$ is the complement of the partial sum in the right hand side of~\eqref{eq:sumVSPartialSum}. Following the proof of Theorem~\ref{thm:transfer} we obtain as $z \to 1$
\[
G_{n,\theta,p}(z) =
\sum_{k = \lfloor \frac{\log(2n)}{2} \rfloor}^\infty
\frac{(\frac{1}{2} g_\theta(z^2))^k}{k!}
\prod_{i=1}^r \frac{p!}{2} \left(\frac{1}{(1-z)^{p_i+1}} + O(1) \right)
\]
where the $O(1)$ is uniform in $n$ (it only depends on $g_\theta(z)$).
Applying Lemma~\ref{lem:partialSumUpperBound} we obtain
\[
    [z^{2n}] \, G_{n,\theta,p}(z)
    =
    O \left( (2n)^{p_1+\cdots + p_r + r - 1 - \frac{1}{2} (\kappa\log\kappa - \kappa)} \right).
\]
For $\kappa > 1$ we have $-1-\frac{1}{2}(\kappa \log \kappa - \kappa) < -1/2$ and the above sum is negligible compared to the asymptotics of the full sum $S_{\theta,p,n}$ from Theorem~\ref{thm:transfer}.
\end{proof}

\subsection{Proof of Theorem~\ref{thm:GEM}}
\begin{proof}[Proof of Theorem~\ref{thm:GEM}]
    By Lemma~\ref{lem:momentMethod}, it suffices to prove the convergence of the moments $M_p(\tilde{L}^{(g,m,\kappa)*})$ for all $p=(p_1,\ldots,p_r)$ towards the moments of the $\GEM(1/2)$ distribution that were computed in Lemma~\ref{lem:GEMMoments}.

    Now, Theorem~\ref{thm:LgMomentsAsymptotics}, provides an asymptotic equivalence of $M_p(\tilde{L}^{(g,m,\kappa)*})$ involving the sum
\[
\sum_{k=r}^{\kappa \frac{\log(6g-6)}{2}} \frac{1}{(k-r)!} \sum_{\substack{(j_1,\dots,j_k) \in \NN^k \\ j_1+\cdots +j_k = 3g-3}}
    \prod_{i=1}^k \frac{\zeta_m(2 j_i)}{2 j_i} \prod_{i=1}^r \frac{(2 j_i + p_i)!}{(2j_i - 1)!}.
\]
The asymptotics of the above sum was then obtained from Corollary~\ref{cor:zetamAsymptotics} and Theorem~\ref{thm:remainder}. Namely, the above is asymptotically equivalent to
\[
\sqrt{\frac{m}{m+1}} \cdot
\frac{p_1! \cdots p_r!}{2^{r-1}} \cdot \frac{(6g-6)^{p_1 + \cdots + p_r + r - 1/2}}{\varGamma(p_1 + \cdots + p_r + r + 1/2)}.
\]
Substituting this value in the formula of Theorem~\ref{thm:LgMomentsAsymptotics} we
obtain as $g \to \infty$
\[
M_p(\tilde{L}^{(g,m,\kappa)*}) \sim
\frac{\sqrt{\pi}}{2^r} \cdot
\frac{p_1! \cdots p_r!}{\varGamma(p_1 + \cdots + p_r + r + 1/2)}.
\]
The above is the value of the moments $M_p$ of the distribution $\GEM(1/2)$ from
Lemma~\ref{lem:GEMMoments} as $\theta=1/2$ and $(\theta-1)! = (-1/2)! = \varGamma(1/2) = \sqrt{\pi}$.

Since the convergence of $M_p(\tilde{L}^{(g,m,\kappa)*})$ holds for all $p=(p_1,\ldots,p_r)$, the sequence $\tilde{L}^{(g,m,\kappa)*}$ converges in distribution towards $\GEM(1/2)$.
\end{proof}

\end{document}